\numberwithin{equation}{section}
\newtheorem{Theorem}{Theorem}[section]
\newtheorem{Corollary}[Theorem]{Corollary}
\newtheorem{Lemma}[Theorem]{Lemma}
\newtheorem{Proposition}[Theorem]{Proposition}
\newtheorem{Conjecture}[Theorem]{Conjecture}
 { \theoremstyle{definition}
\newtheorem{Definition}[Theorem]{Definition}

 }
\begin{document}
\allowdisplaybreaks

\newcommand{\arXivNumber}{2108.13883}

\renewcommand{\PaperNumber}{072}

\FirstPageHeading

\ShortArticleName{Quadratic Relations of the Deformed $W$-Algebra for the Twisted Affine Lie Algebra}

\ArticleName{Quadratic Relations of the Deformed $\boldsymbol{W}$-Algebra\\ for the Twisted Affine Lie Algebra of Type $\boldsymbol{A_{2N}^{(2)}}$}

\Author{Takeo KOJIMA}

\AuthorNameForHeading{T.~Kojima}

\Address{Department of Mathematics and Physics, Faculty of Engineering, Yamagata University,\\
Jonan 4-3-16, Yonezawa 992-8510, Japan}
\Email{\href{mailto:kojima@yz.yamagata-u.ac.jp}{kojima@yz.yamagata-u.ac.jp}}
\URLaddress{\url{http://bt.yz.yamagata-u.ac.jp/mathematics/kojima/kojima.html}}

\ArticleDates{Received December 15, 2021, in final form September 09, 2022; Published online October 04, 2022}

\Abstract{We revisit the free field construction of the deformed $W$-algebra by Frenkel and Reshetikhin [{\it Comm. Math. Phys.} {\bf 197} (1998), 1--32], where the basic $W$-current has been identified. Herein, we establish a free field construction of higher $W$-currents of the deformed $W$-algebra associated with the twisted affine Lie algebra $A_{2N}^{(2)}$. We obtain a closed set of quadrat\-ic relations and duality, which allows us to define deformed $W$-algebra ${\mathcal W}_{x,r}\big(A_{2N}^{(2)}\big)$ using generators and relations.}

\Keywords{deformed $W$-algebra; twisted affine algebra; quadratic relation; free field construction; exactly solvable model}

\Classification{81R10; 81R12; 81R50; 81T40; 81U15}

\begin{flushright}
\begin{minipage}{85mm}
\it This paper is dedicated to Professor Michio Jimbo\\
on the occasion of his 70th anniversary
\end{minipage}
\end{flushright}

\section{Introduction}

The deformed $W$-algebra ${\mathcal W}_{x,r}({\mathfrak g})$ is a two-parameter
deformation of the classical $W$-al\-ge\-bra~${\mathcal W}({\mathfrak g})$.
The deformation theory of the $W$-algebra has been studied in
{papers}
\cite{
Awata-Kubo-Odake-Shiraishi,Brazhnikov-Lukyanov,Ding-Feigin,
Feigin-Frenkel,Feigin-Jimbo-Mukhin-Vilkoviskiy,
Frenkel-Reshetikhin1,
Harada-Matsuo-Noshita-Watanabe,
Kojima2,Kojima1,
Odake,Sevostyanov,
Shiraishi-Kubo-Awata-Odake}.
For instance, free field constructions of the basic $W$-current $T_1(z)$ of~${\mathcal W}_{x,r}({\mathfrak g})$ were
{suggested in the case when the underlying Lie algebra is of classical type.}
However, in comparison with the conformal case,
the deformation theory of $W$-algebras
is still not fully developed and understood.
Moreover, finding quadratic relations of the deformed
$W$-algebra~${\mathcal W}_{x,r}({\mathfrak g})$ is still an unresolved problem.

In this {paper}, we generalize the study for
${\mathcal W}_{x,r}\bigl(A_2^{(2)}\bigr)$\footnote{We use two types of symbols,
${\mathcal W}_{x,r}({\mathfrak g})$ and
${\mathcal W}_{x,r}(X_n^{(r)})$,
for the deformed $W$-algebra associated with the affine Lie algebra
${\mathfrak g}$ of type $X_n^{(r)}$.}
by Brazhnikov and Lukyanov \cite{Brazhnikov-Lukyanov}.
They obtained
a quadratic relation for
the $W$-current $T_1(z)$ of
the deformed $W$-algebra ${\mathcal W}_{x,r}\bigl(A_2^{(2)}\bigr)$
\begin{gather*}
f\biggl(\frac{z_2}{z_1}\biggr)T_1(z_1)T_1(z_2)-
f\biggl(\frac{z_1}{z_2}\biggr)T_1(z_2)T_1(z_1)
\\ \qquad
{}=\delta\biggl(\frac{x^{-2}z_2}{z_1}\biggr)T_1(x^{-1}z_2)
-\delta\biggl(\frac{x^{2}z_2}{z_1}\biggr)T_1(xz_2)+c \biggl(
\delta\biggl(\frac{x^{-3}z_2}{z_1}\biggr)
-\delta\biggl(\frac{x^{3}z_2}{z_1}\biggr)\biggr)
\end{gather*}
with an appropriate constant $c$ and a function $f(z)$.
This study aims to generalize the result for the cases
$A_2^{(2)}$ to $A_{2N}^{(2)}$.
We introduce higher
$W$-currents $T_i(z)${, $1 \leq i \leq 2N$,}
by fusion of
the free field construction of the basic $W$-current $T_1(z)$
of ${\mathcal W}_{x,r}\bigl(A_{2N}^{(2)}\bigr)$ \cite{Frenkel-Reshetikhin1}
(see formula (\ref{def:Ti(z)})).
We~obtain a closed set of quadratic relations for the $W$-currents $T_i(z)$,
which is completely different from those
{in the case of deformed $W$-algebras associated with affine Lie algebras of types} $A_N^{(1)}$ and $A(M,N)^{(1)}$
(see formula (\ref{thm:quadratic})).
{We refer the reader to references \cite{Leur1, Leur2}
for the affine Lie superalgebra notation.}
We obtain the duality $T_{2N+1-i}(z)=c_i T_{i}(z)$
with $1 \leq i \leq N$, which
is a new {phenomenon} that does not occur in
the case of deformed $W$-algebras associated with
affine Lie algebras of types
$A_2^{(2)}$, $A_N^{(1)}$, and $A(M,N)^{(1)}$
(see formula (\ref{prop:duality})).
This allows us to define ${\mathcal W}_{x,r}\bigl(A_{2N}^{(2)}\bigr)$ using
generators and relations.
We believe
that this paper presents a key step toward extending our construction for general affine Lie algebras
${\mathfrak g}$, because the structures of
the free field construction of the basic $W$-current
$T_1(z)$ for the affine algebras other than {that of type}
$A_N^{(1)}$
are quite similar to those of {type} $A_{2N}^{(2)}$, not $A_N^{(1)}$.
We have checked that there are similar quadratic relations as
those {for type $A_{2N}^{(2)}$} in the case
of {type} $B_N^{(1)}$ with small rank~$N$.

The remainder of this paper is organized as follows.
In Section~\ref{section:2}, we review
the free field construction of the basic $W$-current $T_1(z)$ of
the deformed $W$-algebra ${\mathcal W}_{x,r}\bigl(A_{2N}^{(2)}\bigr)$~\cite{Frenkel-Reshetikhin1}.
In Section~\ref{section:3},
we introduce higher $W$-currents $T_i(z)$ and present a closed set of quadratic relations and duality.
We also obtain the $q$-Poisson algebra in the classical limit.
In Section~\ref{section:4}, we establish
proofs of Proposition~\ref{prop:3-1} and Theorem~\ref{thm:3-2}.
Section~\ref{section:5} is devoted to discussion.
In Appendices~\ref{appendix:normal} and~\ref{appendix:exchange},
we summarize normal ordering rules.

\section{Free field construction}
\label{section:2}

In this section, we define notation and review the free field construction
of the basic $W$-current $T_1(z)$ of ${\mathcal W}_{x,r}\bigl(A_{2N}^{(2)}\bigr)$.
Throughout this paper, we fix a natural number $N=1,2,3,\dots$,
a real number $r>1$, and a complex number $x$ with $0<|x|<1$.

\subsection{Notation}

In this section, we use complex numbers $a$, $w$, $q$, and $p$
with $w\neq 0$, $q \neq 0,\pm1$, and $|p|<1$. For any integer $n$, we define $q$-integers
\begin{gather*}
[n]_q=\frac{q^n-q^{-n}}{q-q^{-1}}.
\end{gather*}
We {use} symbols for infinite products,
\begin{gather*}
(a;p)_\infty=\prod_{k=0}^\infty (1-a p^k),\qquad
(a_1,a_2, \dots, a_N; p)_\infty=\prod_{i=1}^N (a_i; p)_\infty
\end{gather*}
for complex numbers $a_1, a_2, \dots, a_N$.
The following standard formulas are used,
\begin{gather*}
\exp\Biggl({-}\sum_{m=1}^\infty \frac{1}{m}a^m \Biggr)=1-a,\qquad
\exp\Biggl({-}\sum_{m=1}^\infty \frac{1}{m}\frac{a^m}{1-p^m}\Biggr)=(a;p)_\infty.
\end{gather*}
We use the elliptic theta function $\Theta_p(w)$ and the compact notation $\Theta_p(w_1,w_2, \dots, w_N)$,
\begin{gather*}
\Theta_p(w)=\big(p, w, p w^{-1};p\big)_\infty,\qquad
\Theta_p (w_1, w_2, \dots, w_N)=\prod_{i=1}^N \Theta_p(w_i)
\end{gather*}
for complex numbers $w_1, w_2, \dots, w_N \neq 0$. Define
$\delta(z)$ by the formal series
\begin{gather*}
\delta(z)=\sum_{m \in \mathbb{Z}}z^m.
\end{gather*}

\subsection[Twisted affine Lie algebra of type \protect{$A\_\{2N\}\textasciicircum{}\{(2)\}}$]
{Twisted affine Lie algebra of type $\boldsymbol{A_{2N}^{(2)}}$}

In this section we recall the definition of the twisted affine Lie algebra of type
$A_{2N}^{(2)}$, $N=1,2,3, \dots$, in~\cite{Kac}.
The Dynkin diagram of type $A_{2N}^{(2)}$
is given by
\begin{figure}[h!]
\centering\includegraphics[scale=1.0]{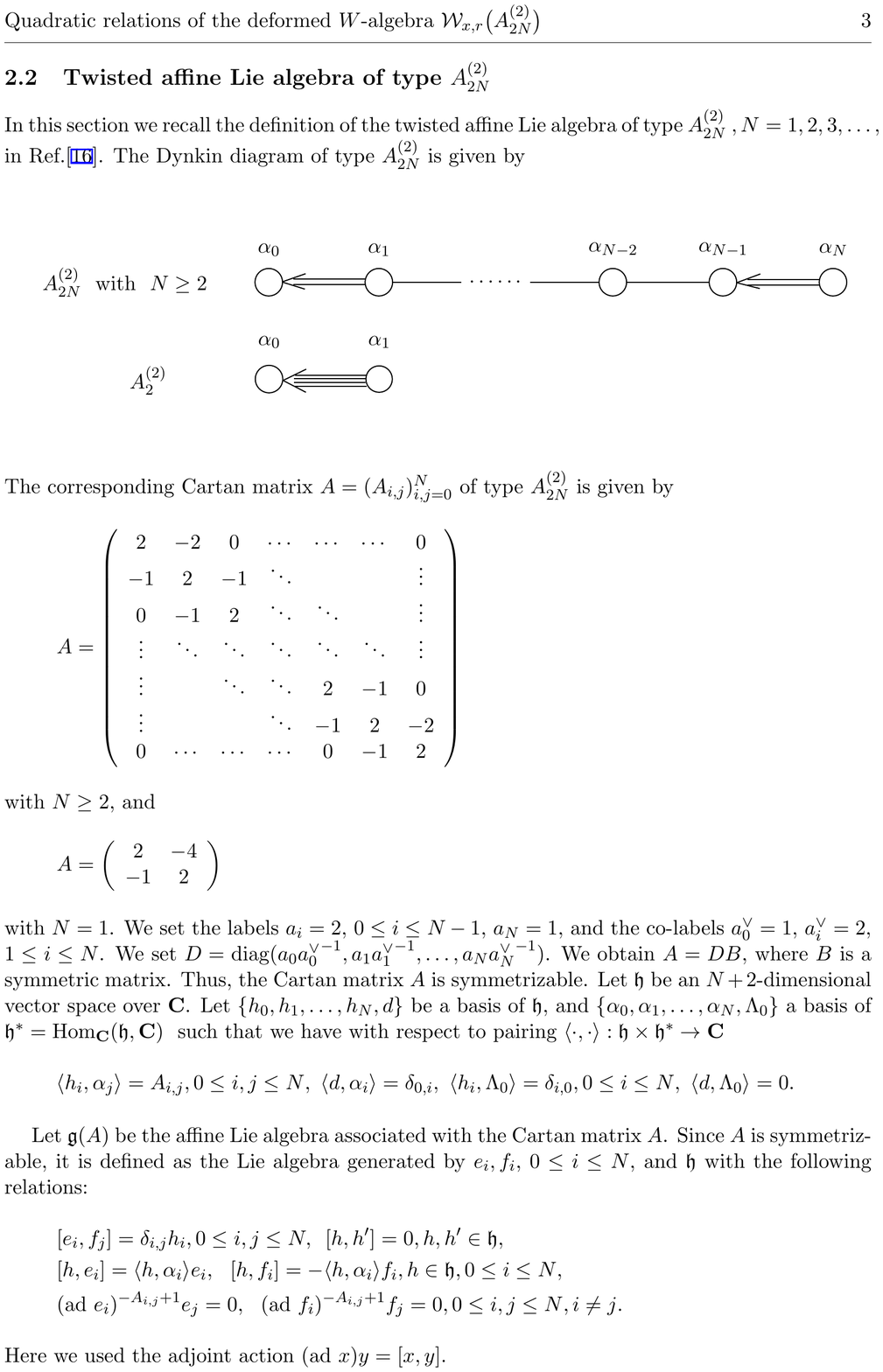}
\end{figure}%

\noindent
The corresponding Cartan matrix
$A=(A_{i,j})_{i, j=0}^N$ of type $A_{2N}^{(2)}$
is given by
\begin{gather*}
A=\begin{pmatrix}
\hphantom{-}2&-2&\hphantom{-}0&\cdots&\cdots&\cdots&\hphantom{-}0\\
-1&\hphantom{-}2&-1&\ddots&&&\vdots\\
\hphantom{-}0&-1&\hphantom{-}2&\ddots&\ddots&&\vdots\\
\hphantom{-}\vdots&\ddots&\ddots&\ddots&\ddots&\ddots&\vdots\\
\hphantom{-}\vdots&&\ddots&\ddots&\hphantom{-}2&-1&\hphantom{-}0\\
\hphantom{-}\vdots&&&\ddots&-1&\hphantom{-}2&-2\\
\hphantom{-}0&\cdots&\cdots&\cdots&\hphantom{-}0&-1&\hphantom{-}2
\end{pmatrix}
\end{gather*}
with $N \geq 2$, and
\begin{gather*}
A=\begin{pmatrix}
\hphantom{-}2&-4\\
-1&\hphantom{-}2
\end{pmatrix}
\end{gather*}
with $N=1$. We set the labels $a_i=2$, $0\leq i \leq N-1$, $a_N=1$, and the co-labels $a_0^{\vee}=1$,
$a_i^{\vee}=2$, $1\leq i \leq N$. We set $D=\operatorname{diag}\big(a_0 {a_0^{\vee}}^{-1}, a_1 {a_1^{\vee}}^{-1},\dots, a_N {a_N^{\vee}}^{-1}\big)$.
We obtain $A=DB$, where~$B$ is a~symmetric matrix. Thus, the Cartan matrix~$A$ is symmetrizable.
Let ${\mathfrak h}$ be an $(N+2)$-dimensional vector space over~${\mathbf C}$.
Let $\{h_0, h_1,\dots, h_N, d\}$ be a basis of ${\mathfrak h}$, and
$\{\alpha_0, \alpha_1,\dots, \alpha_N, \Lambda_0\}$ a basis of
${\mathfrak h}^*={\rm Hom}_{{\mathbf C}}({\mathfrak h}, {\mathbf C})$
such that we have with respect to pairing
$\langle \cdot, \cdot \rangle\colon{\mathfrak h}\times{\mathfrak h}^* \rightarrow {\mathbf C}$
\begin{gather*}
\langle h_i, \alpha_j\rangle=A_{i,j}, \qquad
0 \leq i, j \leq N,\qquad
\langle d, \alpha_i \rangle=\delta_{0,i},
\\
\langle h_i, \Lambda_0\rangle=\delta_{i,0}, \qquad \,
0\leq i \leq N, \qquad\quad
\langle d,\Lambda_0\rangle=0.
\end{gather*}

Let ${\mathfrak g}(A)$
be the affine Lie algebra associated with the Cartan matrix $A$.
Since $A$ is symmetrizable, it is defined as the Lie algebra generated by
$e_i$, $f_i$, $0\leq i \leq N$, and ${\mathfrak h}$ with the following relations:
\begin{gather*}
[e_i, f_j]=\delta_{i,j} h_i, \quad 0\leq i,j \leq N,\qquad
[h,h']=0, \quad h,h' \in {\mathfrak h},\qquad
[h, e_i]=\langle h, \alpha_i\rangle e_i,
\\
[h, f_i]=-\langle h, \alpha_i\rangle f_i,\quad h \in {\mathfrak h}, \quad 0\leq i \leq N,
\\
(\operatorname{ad}e_i)^{-A_{i,j}+1}e_j=0,\qquad
(\operatorname{ad}f_i)^{-A_{i,j}+1}f_j=0, \quad
0\leq i, j \leq N, \quad i \neq j.
\end{gather*}
Here we used the adjoint action $(\operatorname{ad}x) y=[x,y]$.

\subsection{Free field construction}

In this section, we recall the free field construction of
the basic $W$-current $T_1(z)$
and {of} the screening operators $S_i$
of the deformed $W$-algebra ${\mathcal W}_{x,r}\big(A_{2N}^{(2)}\big)$ introduced
by Frenkel and Reshe\-ti\-khin~\cite{Frenkel-Reshetikhin1}.

First, we define the $N \times N$ symmetric matrix
$B(m)=(B_{i,j}(m))_{i,j=1}^N$, $m \in \mathbb{Z}$,
associated with~$A_{2N}^{(2)}$, $N=1,2,3, \dots$,
as follows:
\begin{gather*}
B_{i,j}(m)=
\begin{cases}
\dfrac{[2m]_x}{[m]_x},& 1\leq i, j \leq N-1,\quad i=j,
\\[2ex]
\dfrac{[2m]_x-[m]_x}{[m]_x}, & i=j=N,
\\
-1,& |i-j|=1,
\\
\hphantom{-}0,& |i-j|\geq 2,
\end{cases}
\\
B_{i,j}(0)=\begin{cases}
\hphantom{-}2,& 1\leq i, j \leq N-1,\quad  i=j,\\
\hphantom{-}1,& i=j=N,\\
-1,& {|i-j|=1,}\\
\hphantom{-}0,& |i-j|\geq 2.
\end{cases}
\end{gather*}
We introduce the Heisenberg algebra ${\mathcal H}_{x,r}$
with generators $a_i(m)$, $Q_i$, $m \in \mathbb{Z}$, $1\leq i \leq N$, satisfying
\begin{gather*}
[a_i(m),a_j(n)]=\frac{1}{m}[rm]_x[(r-1)m]_x
B_{i,j}(m)\big(x-x^{-1}\big)^2 \delta_{m+n,0},\quad\
m, n\neq 0, \quad 1\leq i, j \leq N,
\\
[a_i(0),Q_j]=B_{i,j}(0),\qquad 1\leq i,j \leq N.
\end{gather*}
The remaining commutators vanish. The generators $a_i(m)$, $Q_i$ are ``root'' type generators of
${\mathcal H}_{x,r}$. There is a unique set of ``fundamental weight'' type generators
$y_i(m)$, $Q_i^y$, $m \in \mathbb{Z}$, $1\leq i \leq N$, which
satisfy the following relations
\begin{gather*}
[y_i(m), a_j(n)]=\frac{1}{m}[rm]_x[(r-1)m]_x\big(x-x^{-1}\big)^2 \delta_{i,j} \delta_{m+n,0},\qquad
m, n \neq 0, \quad 1\leq i, j \leq N,
\\
[y_i(0),Q_j]=\delta_{i,j},\qquad
[a_i(0), Q_j^y]=\delta_{i,j},\qquad
[y_i(0),a_j(m)]=0,\qquad
m \in \mathbb{Z},\quad 1\leq i, j \leq N.
\end{gather*}
The explicit formulas for $y_i(m)$ and $Q_j^y$ are given in (\ref{eqn:y(m)}).
We use the normal ordering ${:}~{:}$ {on~${\mathcal H}_{x,r}$} that satisfies
\begin{gather*}
{:}a_i(m)a_j(n){:}= \begin{cases}
a_i(m)a_j(n),& m<0,
\\
a_j(n)a_i(m),& m \geq 0,
\end{cases} \qquad
m,n \in \mathbb{Z}, \quad 1\leq i, j \leq N.
\end{gather*}

Let $|0\rangle \neq 0$ be the Fock vacuum of the Fock space
of ${\mathcal H}_{x,r}$ such that $a_i(m)|0\rangle=0$, $m \geq 0$, $1\leq i \leq N$.
Let $\pi_\lambda$ be the Fock space of ${\mathcal H}_{x,r}$
generated by $|\lambda \rangle={\rm e}^{\lambda}|0\rangle$,
$\lambda=\sum_{j=1}^N\lambda_j Q_j^y$. We obtain
\begin{gather}
a_i(0)|\lambda\rangle=\lambda_i|\lambda\rangle,\qquad
a_i(m)|\lambda \rangle=0, \qquad
m>0,\quad 1\leq i \leq N.
\label{def:Fock}
\end{gather}
We work in the Fock space $\pi_\lambda$ of the Heisenberg algebra ${\mathcal H}_{x,r}$.
Let the vertex operators $A_i(z)$, $Y_i(z)$, and $S_i(z)$, $1\leq i \leq N$, be
\begin{gather}
A_i(z)=x^{r a_i(0)}{:}\exp\Biggl(\sum_{m \neq 0}a_i(m)z^{-m}\Biggr){:},
\label{def:A}
\\
Y_i(z)=x^{r y_i(0)}{:}\exp\Biggl(\sum_{m \neq 0}y_i(m)z^{-m}\Biggr){:},
\label{def:Y}
\\
S_i(z)=z^{\frac{r-1}{2r}B_{i,i}(0)}{\rm e}^{-\sqrt{\frac{r-1}{r}}Q_i}z^{-\sqrt{\frac{r-1}{r}}a_i(0)}
{:}\exp\Biggl(\sum_{m \neq 0}\frac{a_i(m)}{x^{rm}-x^{-rm}}z^{-m}\Biggr){:}.
\label{def:S}
\end{gather}
The main parts of (\ref{def:A}), (\ref{def:Y}), and (\ref{def:S})
are the same as those of~\cite{Frenkel-Reshetikhin1}.
We corrected the misprints in the formulas for $A_i(z)$, $Y_i(z)$, and $S_i(z)$ in~\cite{Frenkel-Reshetikhin1} by multiplying (\ref{def:A}) and (\ref{def:Y}) by constants and multiplying (\ref{def:S}) by $z^{\frac{r-1}{2r}B_{i,i}(0)}$.
With our fine-tuning, both (\ref{prop:duality}) and (\ref{eqn:screening}) hold.

Let $J_N=\{1,2,\dots,N,0,\overline{N},\dots,\overline{2},\overline{1}\}$.
Here, the indices are ordered as
\begin{gather*}
1 \prec 2 \prec \cdots \prec N \prec 0
\prec \overline{N} \prec \cdots \prec \overline{2} \prec \overline{1}.
\end{gather*}
Let $\overline{\overline{k}}=k$, $k=1,2,\dots, N$, and $\overline{0}=0$.
The indices $i, j \in J_N$ satisfy $i \prec j$ if and only if $\overline{j} \prec \overline{i}$.
We~define $\overline{I}=\{\overline{i_1}, \overline{i_2}, \dots, \overline{i_k}\}$ for a subset
$I \subset J_N$, $I=\{i_1, i_2, \dots, i_k\}$.
Let $T_1(z)$ be the generating series with operator valued coefficients acting on
the Fock space $\pi_\lambda$,
\begin{gather*}
T_1(z)=\sum_{i \in J_N}\Lambda_i(z),
%\label{def:T1}
\end{gather*}
where
\begin{gather}
\Lambda_1(z)=Y_1(z),\qquad
{\Lambda}_k(z)={:}\Lambda_{k-1}(z)A_{k-1}\big(x^{-k+1}z\big)^{-1}{:},\quad
2 \leq k \leq N,\notag
\\
\Lambda_0(z)=\frac{[r-\frac{1}{2}]_x}{[\frac{1}{2}]_x}{:}\Lambda_N(z)A_N\big(x^{-N}z\big)^{-1}{:},\notag
\\
\Lambda_{\overline{N}}(z)=\frac{[\frac{1}{2}]_x}{[r-\frac{1}{2}]_x}
{:}\Lambda_0(z)A_N\big(x^{-N-1}z\big)^{-1}{:},\notag
\\
\Lambda_{\overline{k}}(z)={:}\Lambda_{\overline{k+1}}(z)A_k\big(x^{-2N+k-1}z\big)^{-1}{:},\qquad
1\leq k \leq N-1.
\label{def:Lambda}
\end{gather}
We call $T_1(z)$ the basic $W$-current of the deformed $W$-algebra
${\mathcal W}_{x,r}\big(A_{2N}^{(2)}\big)$.

Let $\pi_\mu$ be the Fock space of ${\mathcal H}_{x,r}$ generated by $|\mu \rangle={\rm e}^\mu|0\rangle$ with
$\mu=\sum_{i=1}^N \mu_i Q_i^y$, where we choose
$\mu_i \in\frac{1}{2}\sqrt{\frac{r-1}{r}}B_{i,i}(0)+\sqrt{\frac{r}{r-1}}\mathbb{Z}$, $1\leq i \leq N$.
From~(\ref{def:Fock}) and~(\ref{def:S}) the power of $w$ in $S_i(w)$, $w^{\frac{r-1}{2r}B_{i,i}(0)}w^{-\sqrt{\frac{r-1}{r}}a_i(0)}$, takes values in integers on $\pi_\mu$.
Hence, $S_i$ is well-defined on $\pi_\mu$.
We define the screening operators $S_i$, $1\leq i \leq N$, acting on the Fock space $\pi_\mu$ as
\begin{gather}
S_i=\oint \frac{{\rm d}w}{{2\pi \sqrt{-1}w}} S_i(w).
\label{def:screening}
\end{gather}
The integral in formula (\ref{def:screening}) means the residue at zero.

\section{Quadratic relations}
\label{section:3}

In this section, we introduce the higher $W$-currents $T_i(z)$ and present
a set of quadratic relations {between}
$T_i(z)$ for the deformed $W$-algebra
${\mathcal W}_{x,r}\big(A_{2N}^{(2)}\big)$.

\subsection{Quadratic relations}

We define the formal series
$\Delta(z)\in {\mathbf C}[[z]]$ and the constant $c(x,r)$ as
\begin{gather*}
\Delta(z)=\frac{\big(1-x^{2r-1}z\big)\big(1-x^{-2r+1}z\big)}{(1-xz)\big(1-x^{-1}z\big)},\qquad
c(x,r)=[r]_x[r-1]_x\big(x-x^{-1}\big).
\end{gather*}
The formal series $\Delta(z)$ satisfies
\begin{gather*}
\Delta(z)-\Delta\big(z^{-1}\big)=c(x,r)\big(\delta\big(x^{-1}z\big)-\delta(xz)\big),
\\
\Delta(z)\Delta(x^sz)-\Delta\big(z^{-1}\big)\Delta\big(x^{-s}z^{-1}\big)
\\ \qquad\!
{}=c(x,r)\big\{\Delta\big(x^{s+1}\big)\big(\delta\big(x^{-1}z\big)-\delta(x^{s+1}z)\big)
+\Delta\big(x^{s-1}\big)(\delta\big(x^{s-1}z\big)-\delta(xz))\big\},\quad
s \neq 0, \pm2.
\end{gather*}
We define the structure functions $f_{i,j}(z)$, $i, j=0,1,2,\dots$, as
\begin{align}
f_{i,j}(z)={} &\exp\Biggl({-}\sum_{m=1}^\infty
\frac{1}{m}[(r-1)m]_x[rm]_x\big(x-x^{-1}\big)^2\notag
\\
&
{} \times\frac{[\operatorname{Min}(i,j)m]_x
\big([(N+1-\operatorname{Max}(i,j))m]_x-[(N-\operatorname{Max}(i,j))m]_x\big)
}{[m]_x\big([(N+1)m]_x-[Nm]_x\big)}z^m\Biggr).
\label{def:fij}
\end{align}
The ratio of the structure functions $f_{1,1}(z)$ is
\begin{gather*}
\frac{f_{1,1}(z^{-1})}{f_{1,1}(z)}=-z
\frac{\Theta_{x^{4N+2}}\big(x^2z, x^{2N-1}z, x^{4N+2-2r}z, x^{4N+2r}z, x^{2N+1+2r}z,x^{2N-2r+3}z\big)}
{\Theta_{x^{4N+2}}\big(x^2/z, x^{2N-1}/z, x^{4N+2-2r}/z, x^{4N+2r}/z, x^{2N+1+2r}/z, x^{2N-2r+3}/z\big)}.
\end{gather*}

We introduce higher $W$-currents $T_i(z)$ as follows:
\begin{gather}
T_0(z)=1,\qquad
T_1(z)=\sum_{i \in J_N}\Lambda_i(z),\notag
\\
T_i(z)=\sum\limits_{\substack{\Omega_i \subset J_N\\|\Omega_i|=i}}
d_{\Omega_i}(x,r) \overrightarrow{\Lambda}_{\Omega_i}(z),\qquad
2 \leq i \leq 2N+1.
\label{def:Ti(z)}
\end{gather}
Here, for a subset $\Omega_i=\{s_1, s_2, \dots, s_i\}
\subset J_N$ with $s_1\prec s_2 \prec \cdots \prec s_i$, we set
\begin{gather*}
d_{\Omega_i}(x,r)=
\prod\limits_{\substack{1\leq p<q \leq i \\s_q=\overline{s}_p}}\Delta\big(x^{2(q-p+s_p-N-1)}\big),\qquad
d_{\varnothing}(x,r)=1,
\\
\overrightarrow{\Lambda}_{\Omega_i}(z)={:}\Lambda_{s_1}\big(x^{-i+1}z\big)\Lambda_{s_2}\big(x^{-i+3}z\big)
\cdots \Lambda_{s_i}\big(x^{i-1}z\big){:},\qquad
\overrightarrow{\Lambda}_{\varnothing}(z)=1.
%\label{def:d-Lambda}
\end{gather*}

\begin{Proposition}
\label{prop:3-1}
The $W$-currents $T_i(z)$ satisfy the duality
\begin{gather}
T_{2N+1-i}(z)=\frac{\big[r-\frac{1}{2}\big]_x}{\big[\frac{1}{2}\big]_x}\prod_{k=1}^{N-i}
\Delta\big(x^{2k}\big)T_i(z),\qquad
0\leq i \leq N.
\label{prop:duality}
\end{gather}
\end{Proposition}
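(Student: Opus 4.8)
The plan is to prove the duality $T_{2N+1-i}(z) = \frac{[r-\frac12]_x}{[\frac12]_x}\prod_{k=1}^{N-i}\Delta(x^{2k})\, T_i(z)$ by setting up an explicit bijection between the index sets parametrizing $T_i(z)$ and $T_{2N+1-i}(z)$, and then matching the corresponding terms one by one. Recall that $T_i(z)$ is a sum over subsets $\Omega_i \subset J_N$ of size $i$, where $|J_N| = 2N+1$. The natural map to exploit is the involution $I \mapsto \overline{I}$ on subsets of $J_N$ together with complementation $\Omega \mapsto J_N \setminus \Omega$. Since $|\Omega_{2N+1-i}| = 2N+1-i$ and $|J_N| = 2N+1$, taking complements gives a size-$i$ subset; composing with the bar involution (which, by the stated property that $i \prec j \iff \overline{j} \prec \overline{i}$, respects the ordering) should identify the terms of $T_{2N+1-i}(z)$ with those of $T_i(z)$ up to scalar factors.

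**First** I would verify the claim at the level of the ordered vertex-operator products $\overrightarrow{\Lambda}_{\Omega}(z)$. The key structural input is that the full product $\overrightarrow{\Lambda}_{J_N}(z) = {:}\prod_{s \in J_N}\Lambda_s(x^{\bullet}z){:}$ (the $i = 2N+1$, top-degree term) collapses to a scalar multiple of the identity: by the telescoping definitions in (\ref{def:Lambda}), each $\Lambda_s$ is built from the previous one by multiplying by an inverse $A$-factor, so that $:\!\prod_{s}\Lambda_s\!:$ has all Heisenberg contributions cancel, leaving only the numerical prefactors $\frac{[r-\frac12]_x}{[\frac12]_x}$ (from $\Lambda_0$) and the $\Delta(x^{2k})$ factors coming from the normal-ordering contractions between $\Lambda_{s}$ and $\Lambda_{\overline{s}}$. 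Establishing this ``resolution of the identity'' is the cornerstone: it lets me write $\overrightarrow{\Lambda}_{\overline{J_N \setminus \Omega}}(z)$ as a scalar times $\overrightarrow{\Lambda}_{\Omega}(z)$, since the missing factors in the complement are exactly the ones that contracted to scalars in the full product.

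**Then** I would track the coefficients carefully. On the operator side I need the normal-ordering constants, i.e.\ the contraction factors $\langle \Lambda_s(w_1)\Lambda_t(w_2)\rangle$, which are governed by $\Delta(z)$ and are recorded in the appendices (\ref{appendix:normal})--(\ref{appendix:exchange}). On the combinatorial side I must show that the product of $\Delta$-factors $d_{\Omega_i}(x,r)$ over the pairs $s_q = \overline{s}_p$ in $\Omega_i$, multiplied by the scalars picked up from the resolution of identity over the complementary pairs, reassembles into exactly $\frac{[r-\frac12]_x}{[\frac12]_x}\prod_{k=1}^{N-i}\Delta(x^{2k})\, d_{\Omega_i}(x,r)$. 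This amounts to checking that the $\Delta$-arguments $x^{2(q-p+s_p-N-1)}$ in the definition of $d_{\Omega_i}$ shift consistently under complementation-plus-bar, and that the ``unpaired'' bar-pairs of the complement contribute precisely the telescoping product $\prod_{k=1}^{N-i}\Delta(x^{2k})$.

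**The main obstacle** I anticipate is bookkeeping the $\Delta$-factor indices precisely: the arguments of $\Delta$ depend on the positions $p, q$ within the ordered subset and on the value $s_p$, and under the complement-bar bijection these positions and values both shift. Verifying that the net shift leaves $d_{\Omega_i}$ invariant (up to the universal prefactor) requires a clean lemma relating, for a fixed $\Omega_i$, the set of internal bar-pairs of $\Omega_i$ to those of its image, and controlling how the ``global'' factor $\prod_{k=1}^{N-i}\Delta(x^{2k})$ emerges as the pairs $\{k, \overline{k}\}$ with both elements in the complement range over $1 \leq k \leq N-i$. I expect that a direct induction on $i$ (peeling off one index at a time, using the recursive structure of (\ref{def:Lambda})) will organize this computation most transparently, reducing the general case to the already-known $N=1$ relation $\Lambda_{\overline{N}}(z) = \frac{[\frac12]_x}{[r-\frac12]_x}{:}\Lambda_0(z)A_N(x^{-N-1}z)^{-1}{:}$, which exhibits the prefactor $\frac{[r-\frac12]_x}{[\frac12]_x}$ explicitly.
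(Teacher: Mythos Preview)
Your approach matches the paper's: the bijection $\Omega_i \mapsto \overline{J_N \setminus \Omega_i}$, the reduction to the full-product identity $\overrightarrow{\Lambda}_{J_N}(z) = \frac{[r-\frac12]_x}{[\frac12]_x}$, and a separate lemma computing the ratio $d_{J_N\setminus\Omega_i}(x,r)/d_{\Omega_i}(x,r)$ are exactly the three ingredients the paper isolates (the paper proves the $d$-ratio by induction on $N$ rather than on $i$, but the content is the same). One subtlety you have not anticipated and should watch for in execution: the scalar relating $\overrightarrow{\Lambda}_{\overline{J_N\setminus A}}(z)$ to $\overrightarrow{\Lambda}_A(z)$ is $\frac{[r-\frac12]_x}{[\frac12]_x}$ only when $0 \notin A$ and is its \emph{reciprocal} when $0 \in A$; correspondingly the $d$-ratio acquires an extra factor $\Delta(1) = \bigl([r-\tfrac12]_x/[\tfrac12]_x\bigr)^2$ in the $0 \in A$ case, and these two discrepancies cancel---but the paper treats the two cases separately throughout, and your argument will need to as well.
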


\begin{Theorem}\label{thm:3-2}
The $W$-currents $T_i(z)$ satisfy the set of quadratic relations
\begin{gather}
f_{i,j}\bigg(\frac{z_2}{z_1}\bigg)T_i(z_1)T_j(z_2)-
f_{j,i}\bigg(\frac{z_1}{z_2}\bigg)T_j(z_2)T_i(z_1)\notag
\\ \qquad
{}=c(x,r)\sum_{k=1}^i \prod_{l=1}^{k-1}\Delta\big(x^{2l+1}\big)
\bigg(\delta\bigg(\frac{x^{-j+i-2k}z_2}{z_1}\bigg)
f_{i-k, j+k}\big(x^{j-i}\big)T_{i-k}\big(x^{k}z_1\big)T_{j+k}\big(x^{-k}z_2\big)\notag
\\ \qquad\hphantom{=}
{}-\delta\bigg(\frac{x^{j-i+2k}z_2}{z_1}\bigg)
f_{i-k, j+k}\big(x^{-j+i}\big)T_{i-k}\big(x^{-k}z_1\big)T_{j+k}\big(x^kz_2\big)\bigg)\notag
\\ \qquad\hphantom{=}
{}+ c(x,r)\prod_{l=1}^{i-1}\Delta\big(x^{2l+1}\big)
\prod_{l=N+1-j}^{N+i-j}\Delta\big(x^{2l}\big)\notag
\bigg(\delta\bigg(\frac{x^{-2N+j-i-1}z_2}{z_1}\bigg)T_{j-i}\big(x^{-i}z_2\big)
\\ \qquad\hphantom{=+ c(x,r)}
{}-\delta\bigg(\frac{x^{2N-j+i+1}z_2}{z_1}\bigg)T_{j-i}\big(x^{i}z_2\big)\bigg),\qquad
1\leq i \leq j \leq N
\label{thm:quadratic}.
\end{gather}
Here, we use $f_{i,j}(z)$ {introduced} in \eqref{def:fij}.
\end{Theorem}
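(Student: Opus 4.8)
The plan is to reduce the quadratic relation to the free-field two-point functions and then extract its singular part by a residue computation governed by the two identities for $\Delta(z)$ displayed above. First I would record, from the Heisenberg commutation relations together with the normal-ordering rules of the appendices, the contraction $G_{a,b}(z_2/z_1)$ defined by $\Lambda_a(z_1)\Lambda_b(z_2)=G_{a,b}(z_2/z_1)\,{:}\Lambda_a(z_1)\Lambda_b(z_2){:}$ for all $a,b\in J_N$; since the commutators carry the prefactor $\frac{1}{m}[rm]_x[(r-1)m]_x\big(x-x^{-1}\big)^2$, these contractions are exactly of the exponential shape that defines $f_{i,j}(z)$ in \eqref{def:fij}. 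By multiplicativity of the normal ordering, $\overrightarrow{\Lambda}_{\Omega_i}(z_1)\overrightarrow{\Lambda}_{\Omega_j}(z_2)$ factors as the product of all cross contractions between the two groups times the total normal-ordered monomial, so that $f_{i,j}(z_2/z_1)T_i(z_1)T_j(z_2)$ becomes a sum over pairs $(\Omega_i,\Omega_j)$ of normal-ordered monomials weighted by $d_{\Omega_i}d_{\Omega_j}$ and by explicit ratios of the theta functions $\Theta_{x^{4N+2}}$.

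The function $f_{i,j}(z)$ is tuned precisely so that, after this multiplication, each normal-ordered coefficient is (up to the explicit monomial prefactor recorded in the displayed ratio $\frac{f_{1,1}(z^{-1})}{f_{1,1}(z)}$) invariant under $z_1\leftrightarrow z_2$. Hence in the difference $f_{i,j}(z_2/z_1)T_i(z_1)T_j(z_2)-f_{j,i}(z_1/z_2)T_j(z_2)T_i(z_1)$ every regular (genuinely normal-ordered) contribution cancels, and what survives is supported only at the resonance points $z_2/z_1=x^{\pm(j-i+2k)}$ and $z_2/z_1=x^{\pm(2N-j+i+1)}$, where the elliptic coefficients develop poles. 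This reduces the theorem to a computation of residues at these finitely many points.

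To evaluate the residues I would use the two $\Delta$-identities stated before Proposition~\ref{prop:3-1}. A resonance at $z_2/z_1=x^{\pm(j-i+2k)}$ occurs precisely when $k$ conjugate pairs $\big(s,\overline{s}\big)$ collide across the two groups; on each such pair the factor $\Delta(z)-\Delta(z^{-1})$ collapses to $c(x,r)\big(\delta(x^{-1}z)-\delta(xz)\big)$, contributing one power of $c(x,r)$ and fusing the colliding operators into shifted arguments. The remaining $i-k$ and $j+k$ operators reassemble into the $\overrightarrow{\Lambda}$'s of $T_{i-k}(x^{\pm k}z_1)$ and $T_{j+k}(x^{\mp k}z_2)$, while the $\Delta$-factors surviving from the non-colliding pairs combine with the fusion shifts to produce $f_{i-k,j+k}\big(x^{\pm(j-i)}\big)$ and the prefactor $\prod_{l=1}^{k-1}\Delta\big(x^{2l+1}\big)$. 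The second $\Delta$-identity, governing simultaneous double collisions, is what generates the final $T_{j-i}$ terms together with $\prod_{l=N+1-j}^{N+i-j}\Delta\big(x^{2l}\big)$; here the central index $0$ and the normalization $\frac{[r-\frac{1}{2}]_x}{[\frac{1}{2}]_x}$ enter, which is the mechanism that has no analogue in the untwisted $A_N^{(1)}$ case.

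The main obstacle is the combinatorial resummation in the previous step: one must show that summing $d_{\Omega_i}(x,r)\,d_{\Omega_j}(x,r)$ over all subset pairs that resonate at a fixed point reproduces exactly the clean coefficients $c(x,r)\prod_{l=1}^{k-1}\Delta\big(x^{2l+1}\big)$ and $f_{i-k,j+k}\big(x^{\pm(j-i)}\big)$. This is a telescoping identity among the $\Delta$-products defining $d_{\Omega}$, which must be checked collision-pattern by collision-pattern with careful attention to the order $\prec$ on $J_N$ and the involution $\overline{\ \cdot\ }$. I would organize this as an induction on $i$ with $j$ fixed, passing from the already-established relation for $T_1(z_1)T_j(z_2)$ to the general case via the fusion structure of \eqref{def:Ti(z)}, and invoking Proposition~\ref{prop:3-1} both to fold indices exceeding $N$ back into the range $1\le i,j\le N$ and to simplify the boundary contributions involving the central index. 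The regularity built into $f_{i,j}$ guarantees that the bulk cancellation persists at each inductive step, so that the entire content of the theorem is concentrated in this contact-term bookkeeping.
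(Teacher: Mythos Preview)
Your plan has the same skeleton as the paper's proof---a base case $i=1$ followed by induction on $i$ via fusion---but the flesh you put on it differs from what the paper actually does, and in places would not work as written.

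For the base case, the paper does not argue abstractly that ``regular contributions cancel and only residues survive.'' It writes out $f_{1,j}(z_2/z_1)T_1(z_1)T_j(z_2)-f_{j,1}(z_1/z_2)T_j(z_2)T_1(z_1)$ as a sum over $s\in J_N$ and $\Omega_j\subset J_N$ of the seven exchange relations (\ref{exchange:L1})--(\ref{exchange:L7}), and then painstakingly identifies, for each possible $\delta$-support $z_2/z_1=x^{\pm(j+1-2m)}$ and $z_2/z_1=x^{\pm(2N-j+2-2m)}$, which monomials contribute. Only four supports survive (giving $T_{j\pm1}$), and all the intermediate ones cancel---but this cancellation is a long term-by-term verification, not an automatic consequence of the theta-function symmetry of $f_{1,1}$. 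Your appeal to the two $\Delta$-identities is the right algebraic mechanism at the level of a single pair $\Lambda_s\Lambda_t$, but it does not by itself organize the resummation over all $(\Omega_i,\Omega_j)$; that is precisely the content of the paper's computation of $\overline{G}_{j+1-2m}$, $G_{j+1-2m}$, $\overline{H}_{2N-j+2-2m}$, $H_{2N-j+2-2m}$.

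For the inductive step, the paper does \emph{not} redo a residue computation at level $i+1$. Instead it proves separate fusion lemmas
\[
\lim_{z_1\to x^{\pm(i+j)}z_2}\Big(1-\tfrac{x^{\pm(i+j)}z_2}{z_1}\Big)f_{i,j}\!\big(\tfrac{z_2}{z_1}\big)T_i(z_1)T_j(z_2)=\mp c(x,r)\prod_{l=1}^{\min(i,j)-1}\Delta(x^{2l+1})\,T_{i+j}(x^{\pm i}z_2)
\]
and companion limits at $z_1=x^{\pm(2N+1+i-j)}z_2$ producing $T_{j-i}$ (the latter via Proposition~\ref{prop:3-1}). The induction then proceeds by multiplying the level-$(i,j)$ relation on the left by $f_{1,i}(z_1/z_3)f_{1,j}(z_2/z_3)T_1(z_3)$, expanding with the already-known $i=1$ relations, and sending $z_3\to x^{-i-1}z_1$ so that $T_1T_i$ fuses into $T_{i+1}$. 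This is a clean algebraic bootstrap that avoids any further $d_{\Omega}$-combinatorics; your ``telescoping identity among the $\Delta$-products'' is not needed beyond $i=1$. What you are missing is the explicit statement and proof of these fusion limits, together with the algebraic identities (\ref{eqn:fusion-f4})--(\ref{eqn:fusion-f6}) among the $f_{i,j}$ that make the limit computation close.
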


In view of Proposition \ref{prop:3-1}	
and Theorem \ref{thm:3-2}, we obtain the following definition.

\begin{Definition}
\label{def:3-3}
Let $W$ be the free complex associative algebra generated by elements
$\overline{T}_i[m]$, $m\in \mathbb{Z}$, $1\leq i \leq 2N$, $I_K$ the left
ideal generated by elements $\overline{T}_i[m]$, $m \geq K \in \mathbb{N}$,
$1\leq i \leq 2N$, and
\begin{gather*}
\widehat{W}=\lim_{\leftarrow}W/I_K.
\end{gather*}
The deformed $W$-algebra
${\mathcal W}_{x,r}\bigl(A_{2N}^{(2)}\bigr)$
is the quotient of $\widehat{W}$ by the two-sided ideal generated by
the coefficients of the generating series which are the differences
of the right hand sides and of the left hand sides
of the relations (\ref{prop:duality}) and (\ref{thm:quadratic}),
where the generating series $T_i(z)$
are replaced with
$\overline{T}_i(z)=\sum_{m \in \mathbb{Z}}\overline{T}_i[m]z^{-m}$, $1\leq i \leq 2N$,
and $\overline{T}_0(z)=1$.
\end{Definition}

The justification of this definition is presented later. We compare
this definition of the deformed $W$-algebra with other definitions in Section \ref{section:5}.
\begin{Lemma}\label{lem:3-4}
The $W$-currents $T_i(z)$ commute with the screening operators $S_j$,
\begin{gather}
[T_i(z), S_j]=0,\qquad
1 \leq i \leq 2N,\quad 1\leq j \leq N.
\label{eqn:screening}
\end{gather}
\end{Lemma}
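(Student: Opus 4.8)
The plan is to reduce the commutativity $[T_i(z),S_j]=0$ to the single basic case $[T_1(z),S_j]=0$, and then to establish that basic case by the standard exact--form argument. First I would observe that each higher current $T_i(z)$ is, by the definition \eqref{def:Ti(z)}, a sum of normal-ordered products of the elementary factors $\Lambda_s(z)$, and that each $\Lambda_s(z)$ is in turn a normal-ordered product of the vertex operators $Y_1(z)$ and $A_k(z)^{-1}$ built from the root-- and weight--type oscillators. Since the screening current $S_j(w)$ is itself a vertex operator attached to the root oscillator $a_j(m)$, the commutator $[T_i(z),S_j]$ will follow from the operator product expansions between $S_j(w)$ and each $\Lambda_s(z)$. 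Hence the essential input is the collection of contraction functions coming from the commutators $[a_j(m),a_s(n)]$ and $[y_s(m),a_j(n)]$ recorded in Section~\ref{section:2}.

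The key computational step is to show that for each elementary factor the product $\Lambda_s(z)S_j(w)$ and its reordering $S_j(w)\Lambda_s(z)$ differ only by an explicit meromorphic scalar factor, and that when one forms the contour integral $S_j=\oint \frac{{\rm d}w}{2\pi\sqrt{-1}\,w}S_j(w)$ the two orderings combine into a total $w$--difference (a $q$--difference / lattice derivative in the multiplicative variable $w$). Concretely, I expect that $[\,\Lambda_s(z),S_j(w)\,]$ is proportional to a difference $\delta(\cdots)\bigl(B_s(w)-B_s(xw)\bigr)$-type expression, so that upon taking residues the contributions telescope. The miracle that makes the screening charge work is precisely that $T_1(z)=\sum_{s\in J_N}\Lambda_s(z)$ is engineered so that the residues arising from consecutive terms $\Lambda_k(z)$ and $\Lambda_{k+1}(z)$ (which share the factor $A_k$) cancel pairwise; this is the mechanism by which $\oint {\rm d}w\,[T_1(z),S_j(w)]=0$. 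The fine-tuning of the constants in \eqref{def:A}, \eqref{def:Y}, and \eqref{def:S}, emphasized in Section~\ref{section:2} as being needed for \eqref{eqn:screening} to hold, is exactly what guarantees these residues match.

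For the higher currents $T_i(z)$ with $i\geq 2$, I would argue inductively rather than recomputing everything. Because $T_i(z)$ is assembled by fusion of $T_1$ at shifted arguments, and because the constants $d_{\Omega_i}(x,r)$ are the same ones that already appear in the duality relation \eqref{prop:duality}, the commutator $[T_i(z),S_j]$ decomposes into fused copies of the basic residue cancellation; the telescoping within each $\overrightarrow{\Lambda}_{\Omega_i}(z)$ again forces the boundary residues to cancel. Finally, the duality \eqref{prop:duality} of Proposition~\ref{prop:3-1} lets me cover the range $N<i\leq 2N$ for free: since $T_{2N+1-i}(z)$ is a scalar multiple of $T_i(z)$, commutativity for $1\leq i\leq N$ immediately yields it for the reflected indices. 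I expect the main obstacle to be bookkeeping rather than conceptual: one must track the precise powers of $x$ in the arguments $x^{-k+1}z,\ x^{-N}z,\ x^{-2N+k-1}z$ appearing in \eqref{def:Lambda} so that the zeros and poles of the contraction functions land exactly where the residues cancel, and verify that the normalization constants $\frac{[r-1/2]_x}{[1/2]_x}$ attached to $\Lambda_0$ and $\Lambda_{\overline N}$ are consistent with this cancellation at the ``turning point'' of the ordering $1\prec\cdots\prec N\prec 0\prec\overline N\prec\cdots\prec\overline 1$.
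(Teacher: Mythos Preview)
Your treatment of the base case $[T_1(z),S_j]=0$ is essentially the same as the paper's: one computes the commutator $[\Lambda_s(z),S_j(w)]$ from the OPEs, finds that $[T_1(z),S_j(w)]$ is a sum of $q$-difference-of-delta terms, and concludes that the contour integral vanishes. That part is fine.

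The gap is in your inductive step for $i\geq 2$. You assert that ``the telescoping within each $\overrightarrow{\Lambda}_{\Omega_i}(z)$ again forces the boundary residues to cancel.'' This is not how the cancellation works. For a fixed subset $\Omega_i=\{s_1,\dots,s_i\}$, the commutator $[\overrightarrow{\Lambda}_{\Omega_i}(z),S_j(w)]$ only sees those $\Lambda_{s_k}$ with $s_k\in\{j,j+1,\overline{j},\overline{j+1}\}$ (or the pair $N,0,\overline N$ when $j=N$); for a generic $\Omega_i$ these do not come in matched pairs, so the residues do \emph{not} telescope term by term. The mechanism that makes $[T_1(z),S_j]=0$ is precisely the summation over \emph{all} $s\in J_N$, and for $T_i(z)$ the required cancellation is across different subsets $\Omega_i$ (for instance, $\Omega_i$ containing $j$ but not $j+1$ paired with the subset obtained by replacing $j$ with $j+1$). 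Making this direct argument precise would mean redoing, at the level of each ``slot'' of the normal-ordered product, a combinatorial pairing on subsets together with control of the coefficients $d_{\Omega_i}(x,r)$; your proposal does not supply this.

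The paper avoids this entirely by taking a different inductive route. Having already established the quadratic relation \eqref{eqn:quadratic2} between $T_1$, $T_j$, $T_{j-1}$, and $T_{j+1}$, it passes to Fourier modes and solves for $T_{j+1}[m]$ as an (infinite) expression in $T_j[n]$, $T_{j-1}[n]$, and $T_1[n]$. Iterating, every $T_i[m]$ with $1\le i\le N+1$ lies in the subalgebra generated by the $T_1[n]$; then duality \eqref{prop:duality} covers $N+2\le i\le 2N$, exactly as you say. Commutation with $S_j$ therefore follows from the single computation $[T_1(z),S_j]=0$. If you want to keep the spirit of your proposal, replace your ``telescoping within each $\overrightarrow{\Lambda}_{\Omega_i}$'' paragraph with this algebraic recursion via \eqref{eqn:quadratic2}; that is the missing idea.
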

We present the proofs of Proposition \ref{prop:3-1},
Theorem \ref{thm:3-2}, and Lemma \ref{lem:3-4}
in Section \ref{section:4}.

\subsection{Classical limit}

The deformed $W$-algebra
${\mathcal W}_{x, r}\bigl({\mathfrak g}\bigr)$
yields a $q$-Poisson $W$-algebra
\cite{Frenkel-Reshetikhin2,Frenkel-Reshetikhin1,Frenkel-Reshetikhin-Semenov,Semenov-Sevostyanov}
in the classical limit.
As an application of the quadratic relations (\ref{thm:quadratic}),
we obtain a $q$-Poisson $W$-algebra of type $A_{2N}^{(2)}$.
We set parameters $q=x^{2r}$ and $\beta=(r-1)/r$.
We define the $q$-Poisson bracket $\{\cdot, \cdot\}$
by taking the classical limit $\beta \to 0$ with $q$ fixed as
\begin{gather*}
\bigl\{T_i^{{\rm PB}}[m], T_j^{{\rm PB}}[n]\bigr\}
=\lim_{\beta \to 0}\frac{1}{2 \beta \log q}[T_i[m],T_j[n]].
\end{gather*}
Here, we introduce $T_i^{\rm PB}[m]$ by
\[
T_i(z)=\sum_{m \in \mathbb{Z}}T_i[m]z^{-m} \longrightarrow T_i^{\rm PB}(z)=\sum_{m \in \mathbb{Z}}T_i^{\rm PB}[m]z^{-m},\qquad
\beta \to 0,\quad
q~\text{fixed}.
\]

The $\beta$-expansions of the structure functions are given as
\begin{gather*}
f_{i,j}(z)=1-2 \beta \log q \big(q-q^{-1}\big)\sum_{m=1}^\infty
[\operatorname{Min}(i,j)m ]_q
\\ \hphantom{f_{i,j}(z)=}
{}\times\frac{[(N+1-\operatorname{Max}(i,j))m]_q-[(N-\operatorname{Max}(i,j))m]_q}
{[(N+1)m]_q-[Nm]_q}z^m+O\big(\beta^2\big),\qquad i, j \geq 1,
\\
c(x,r)=2 \beta \log q+O\big(\beta^2\big).
\end{gather*}
As corollaries of Proposition \ref{prop:3-1} and Theorem \ref{thm:3-2}
we obtain the following.

\begin{Corollary}
%\label{cor:3-5}
For the $q$-Poisson $W$-algebra associated with affine Lie algebra
of type $A_{2N}^{(2)}$, the currents $T_i^{\rm PB}(z)$ satisfy
\begin{gather}
\big\{T_i^{\rm PB}(z_1),T_j^{\rm PB}(z_2)\big\}\notag
\\ \qquad
{}=\big(q-q^{-1}\big)C_{i,j}\bigg(\frac{z_2}{z_1}\bigg)T_i^{\rm PB}(z_1)T_j^{\rm PB}(z_2)\notag
+\!\sum_{k=1}^i\!\bigg(\delta\bigg(\frac{q^{-j+i-2k}z_2}{z_1}\bigg)
T_{i-k}^{\rm PB}\big(q^{k}z_1\big)T_{j+k}^{\rm PB}\big(q^{-k}z_2\big)
\\ \qquad\hphantom{=}
{}-\delta\bigg(\frac{q^{j-i+2k}z_2}{z_1}\bigg)
T_{i-k}^{\rm PB}\big(q^{-k}z_1\big)T_{j+k}^{\rm PB}\big(q^{k}z_2\big)\bigg)\notag
+\delta\bigg(\frac{q^{-2N+j-i-1}z_2}{z_1}\bigg)
T_{j-i}^{\rm PB}\big(q^{-i}z_2\big)
\\ \qquad\hphantom{=}
{}-\delta\bigg(\frac{q^{2N-j+i+1}z_2}{z_1}\bigg) T_{j-i}^{\rm PB}\big(q^{i}z_2\big),\qquad
1\leq i \leq j \leq N.
\label{def:q-Poisson}
\end{gather}
Here, the structure functions $C_{i,j}(z)$ are given by
\begin{gather*}
C_{i,j}(z)=\sum_{m \in \mathbb{Z}}\frac{[\rm Min(i,j)m]_q
\bigl([(N+1-\operatorname{Max}(i,j))m]_q-[(N-\operatorname{Max}(i,j))m]_q\bigr)}
{[(N+1)m]_q-[Nm]_q}z^m,
\\
1\leq i,j \leq N.
\end{gather*}
\end{Corollary}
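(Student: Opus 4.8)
The plan is to obtain the $q$-Poisson relation \eqref{def:q-Poisson} directly from the quadratic relation \eqref{thm:quadratic} by expanding every ingredient in powers of $\beta$ and collecting the coefficient of $\beta$. First I would record the behaviour of the building blocks as $\beta\to0$ with $q=x^{2r}$ fixed: the constant satisfies $c(x,r)=2\beta\log q+O(\beta^2)$, each structure function has the shape $f_{i,j}(z)=1+\beta\,\phi_{i,j}(z)+O(\beta^2)$ with $\phi_{i,j}(z)=-2\log q\,(q-q^{-1})\sum_{m\geq1}a_m^{(i,j)}z^m$ and $a_m^{(i,j)}=[\operatorname{Min}(i,j)m]_q\bigl([(N+1-\operatorname{Max}(i,j))m]_q-[(N-\operatorname{Max}(i,j))m]_q\bigr)/\bigl([(N+1)m]_q-[Nm]_q\bigr)$, and the elliptic factor degenerates to $\Delta(z)\to1$ identically, since $x^{2r-1}\to x$ and $x^{-2r+1}\to x^{-1}$ force the numerator and denominator of $\Delta(z)$ to coincide. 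The same degeneration gives $[r-\frac12]_x/[\frac12]_x\to1$, so the duality \eqref{prop:duality} collapses to $T_{2N+1-i}^{\rm PB}(z)=T_i^{\rm PB}(z)$; this is what lets the currents $T_{j+k}^{\rm PB}$ with $j+k>N$ on the right be regarded as honest elements of the $q$-Poisson algebra, and is why Proposition~\ref{prop:3-1} is invoked alongside Theorem~\ref{thm:3-2}.

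Next I would treat the left-hand side of \eqref{thm:quadratic}. Substituting the expansion of $f_{i,j}$ and using the symmetry $\phi_{i,j}=\phi_{j,i}$, the left-hand side equals $[T_i(z_1),T_j(z_2)]+\beta\bigl(\phi_{i,j}(z_2/z_1)T_i(z_1)T_j(z_2)-\phi_{i,j}(z_1/z_2)T_j(z_2)T_i(z_1)\bigr)+O(\beta^2)$. Since the classical limit is commutative, the commutator is already $O(\beta)$, and in the $\beta$-term the two orderings $T_i(z_1)T_j(z_2)$ and $T_j(z_2)T_i(z_1)$ agree to leading order with $T_i^{\rm PB}(z_1)T_j^{\rm PB}(z_2)$. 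Dividing by $2\beta\log q$ and letting $\beta\to0$, the commutator yields $\{T_i^{\rm PB}(z_1),T_j^{\rm PB}(z_2)\}$ by the definition of the bracket, while the $\phi$-term yields $\frac{1}{2\log q}\bigl(\phi_{i,j}(z_2/z_1)-\phi_{i,j}(z_1/z_2)\bigr)T_i^{\rm PB}(z_1)T_j^{\rm PB}(z_2)$. The key point is that $a_m^{(i,j)}$ is odd in $m$ (its numerator is even and its denominator odd under $m\mapsto -m$, and the $m=0$ term vanishes), so that $\phi_{i,j}(z)-\phi_{i,j}(z^{-1})=-2\log q\,(q-q^{-1})\sum_{m\geq1}a_m^{(i,j)}(z^m-z^{-m})=-2\log q\,(q-q^{-1})C_{i,j}(z)$. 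This turns the $\phi$-contribution into $-(q-q^{-1})C_{i,j}(z_2/z_1)T_i^{\rm PB}(z_1)T_j^{\rm PB}(z_2)$, which, moved to the right, reproduces the first term of \eqref{def:q-Poisson}.

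Finally I would pass to the right-hand side of \eqref{thm:quadratic}. Every summand carries the overall factor $c(x,r)=2\beta\log q+O(\beta^2)$, so after dividing by $2\beta\log q$ it survives at order $\beta^0$ with coefficient $1$; at the same time the finite products $\prod\Delta(x^{2l+1})$ and $\prod\Delta(x^{2l})$, the internal coefficients $d_{\Omega_i}(x,r)$, and the prefactors $f_{i-k,j+k}(x^{\pm(j-i)})$ all tend to $1$. Rewriting the $x$-shifts as $q$-shifts and replacing each $T_\bullet$ by $T_\bullet^{\rm PB}$, the two $k$-sums and the two $T_{j-i}^{\rm PB}$ terms collapse precisely to the delta-function terms of \eqref{def:q-Poisson}. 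The main obstacle is the left-hand side bookkeeping: one must check that the antisymmetric combination of structure functions, expanded to first order and symmetrized over the two operator orderings, reduces exactly to $(q-q^{-1})C_{i,j}$, which hinges on the parity of $a_m^{(i,j)}$ and the vanishing of its $m=0$ term, all while keeping the powers of $\beta$ straight as the elliptic factors $\Delta$ simultaneously degenerate to $1$.
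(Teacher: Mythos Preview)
Your approach is correct and matches the paper's: the paper simply states the result as an immediate corollary of Proposition~\ref{prop:3-1} and Theorem~\ref{thm:3-2}, supplying only the $\beta$-expansions of $f_{i,j}(z)$ and $c(x,r)$ and leaving the term-by-term limit implicit, which is precisely the computation you spell out. One small remark: the coefficients $d_{\Omega_i}(x,r)$ do not appear as separate factors on the right-hand side of \eqref{thm:quadratic}---they sit inside the currents $T_i(z)$ and are absorbed into the definition of $T_i^{\rm PB}(z)$---so you need not track them independently when passing to the limit of the right-hand side.
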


\begin{Corollary}
%\label{cor:3-6}
The currents $T_i^{\rm PB}(z)$ satisfy the duality relations
\begin{gather}
T_{2N+1-i}^{\rm PB}(z)=T_i^{\rm PB}(z),\qquad 0\leq i \leq N.
\label{def:q-Poisson2}
\end{gather}
\end{Corollary}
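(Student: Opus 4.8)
The plan is to derive this duality by taking the classical limit of the operator identity in Proposition~\ref{prop:3-1}. Recall that the classical limit is defined by sending $\beta \to 0$ with $q = x^{2r}$ held fixed; since $\beta = (r-1)/r$, this forces $r \to 1$ (and correspondingly $x \to q^{1/2}$). Under this limit the currents converge, $T_i(z) \to T_i^{\rm PB}(z)$ for each $0 \le i \le N$, by the very definition of the Poisson currents $T_i^{\rm PB}(z)$. It therefore suffices to show that the scalar prefactor appearing on the right-hand side of \eqref{prop:duality}, namely $\frac{[r-\frac{1}{2}]_x}{[\frac{1}{2}]_x}\prod_{k=1}^{N-i}\Delta(x^{2k})$, tends to $1$.

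First I would treat the $q$-integer ratio. Since $[n]_x = (x^n - x^{-n})/(x - x^{-1})$, we have $\frac{[r-\frac{1}{2}]_x}{[\frac{1}{2}]_x} = \frac{x^{r-1/2} - x^{-(r-1/2)}}{x^{1/2} - x^{-1/2}}$, and as $r \to 1$ the exponent $r - \frac{1}{2} \to \frac{1}{2}$, so the numerator converges to the denominator and the ratio tends to $1$.

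Next I would examine each factor $\Delta(x^{2k})$. Substituting $z = x^{2k}$ into the definition of $\Delta(z)$ gives $\Delta(x^{2k}) = \frac{(1 - x^{2r-1+2k})(1 - x^{-2r+1+2k})}{(1 - x^{2k+1})(1 - x^{2k-1})}$. As $r \to 1$ the exponents $2r - 1 \to 1$ and $-2r+1 \to -1$, so the numerator approaches $(1 - x^{2k+1})(1 - x^{2k-1})$, which is exactly the denominator; hence $\Delta(x^{2k}) \to 1$ for every $k$, and the finite product $\prod_{k=1}^{N-i}\Delta(x^{2k}) \to 1$ as well (the empty product at $i = N$ being $1$ trivially).

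Combining these two limits, the full prefactor tends to $1$, so passing \eqref{prop:duality} to the limit yields $T_{2N+1-i}^{\rm PB}(z) = T_i^{\rm PB}(z)$ for $0 \le i \le N$, as claimed. There is no genuine obstacle here: the only point requiring care is the bookkeeping of how the parameters behave as $\beta \to 0$, and in particular the observation that $r \to 1$ alone, independently of the $x \to q^{1/2}$ specialization, already forces both the $q$-integer ratio and each $\Delta(x^{2k})$ to degenerate to $1$.
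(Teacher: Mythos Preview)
Your proof is correct and follows exactly the approach implied by the paper, which simply states this corollary without proof as an immediate consequence of Proposition~\ref{prop:3-1} under the classical limit. Your explicit verification that the scalar prefactor $\frac{[r-\frac{1}{2}]_x}{[\frac{1}{2}]_x}\prod_{k=1}^{N-i}\Delta(x^{2k})$ tends to $1$ as $r\to 1$ is accurate and fills in the only detail the paper leaves to the reader.
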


\section{Proof of Theorem \ref{thm:3-2}}
\label{section:4}

In this section, we prove
Proposition \ref{prop:3-1},
Theorem \ref{thm:3-2},
and Lemma \ref{lem:3-4}.

\subsection{Proof of Proposition \ref{prop:3-1}}

\begin{Lemma}
%\label{lemma:4-1}
The $\Lambda_i(z)$, $i \in J_N$, satisfy
\begin{gather}
f_{1,1}\biggl(\frac{z_2}{z_1}\biggr)\Lambda_i(z_1)\Lambda_j(z_2)=
\Delta\biggl(\frac{x^{-1}z_2}{z_1}\biggr)
{:}\Lambda_i(z_1)\Lambda_j(z_2){:},\qquad
i,j \in J_N,\quad i \prec j,\quad j \neq \bar{i},\notag
\\
f_{1,1}\biggl(\frac{z_2}{z_1}\biggr)\Lambda_j(z_1)\Lambda_i(z_2)=
\Delta\biggl(\frac{x z_2}{z_1}\biggr){:}\Lambda_j(z_1)\Lambda_i(z_2){:},\qquad
{i,j \in J_N,\quad i \prec j,\quad j \neq \bar{i}},\notag
\\
f_{1,1}\biggl(\frac{z_2}{z_1}\biggr)\Lambda_0(z_1)\Lambda_0(z_2)=
\Delta\biggl(\frac{z_2}{z_1}\biggr){:}\Lambda_0(z_1)\Lambda_0(z_2){:},\notag
\\
f_{1,1}\biggl(\frac{z_2}{z_1}\biggr)\Lambda_i(z_1)\Lambda_i(z_2)=
{:}\Lambda_i(z_1)\Lambda_i(z_2){:},\qquad
i \in J_N\setminus \{0\},
\label{eqn:Lambda}
\\
f_{1,1}\biggl(\frac{z_2}{z_1}\biggr)\Lambda_k(z_1)\Lambda_{\bar{k}}(z_2)=
\Delta\biggl(\frac{x^{-1}z_2}{z_1}\biggr)
\Delta\biggl(\frac{x^{-2N-2+2k}z_2}{z_1}\biggr)
{:}\Lambda_k(z_1)\Lambda_{\bar{k}}(z_2){:},\qquad
1\leq k \leq N,
\notag
\\
f_{1,1}\biggl(\frac{z_2}{z_1}\biggr)\Lambda_{\bar{k}}(z_1)\Lambda_{k}(z_2)=
\Delta\biggl(\frac{xz_2}{z_1}\biggr)
\Delta\biggl(\frac{x^{2N+2-2k}z_2}{z_1}\biggr)
{:}\Lambda_{\bar{k}}(z_1)\Lambda_{k}(z_2){:},\qquad
1\leq k \leq N.
\notag
\end{gather}
\end{Lemma}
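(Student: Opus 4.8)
The plan is to reduce every one of the six identities to the standard computation of two-point contraction functions between the elementary vertex operators $Y_i(z)$ and $A_i(z)$, since by (\ref{def:Lambda}) each $\Lambda_i(z)$ is a normal-ordered product of $Y_1$ and of inverses of the $A_k$ at arguments shifted by powers of $x$. For a pair of normal-ordered exponentials the product equals the normal-ordered product times $\exp$ of the sum, over positive modes $m>0$, of the commutator of the annihilation part of the left factor with the creation part of the right factor. First I would record all needed commutators. The ``fundamental weight'' generators are fixed by $y_i(m)=\sum_{k=1}^N \big(B(m)^{-1}\big)_{i,k}a_k(m)$ for $m\neq0$, as one reads off from $[y_i(m),a_j(n)]=C(m)\delta_{i,j}\delta_{m+n,0}$ with $C(m)=\tfrac1m[rm]_x[(r-1)m]_x\big(x-x^{-1}\big)^2$; this gives $[y_i(m),y_j(n)]=C(m)\big(B(m)^{-1}\big)_{i,j}\delta_{m+n,0}$. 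Together with $[a_i(m),a_j(n)]=C(m)B_{i,j}(m)\delta_{m+n,0}$ these generate every contraction, while the zero-mode prefactors $x^{r a_i(0)}$ and $x^{r y_i(0)}$ contribute only overall powers of $x$ upon reordering, which must be tracked carefully because they are exactly what shifts the argument of $\Delta$ from $z_2/z_1$ to $x^{\pm1}z_2/z_1$.

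Second, the normalization case $\Lambda_i(z_1)\Lambda_i(z_2)$ with $i\neq0$ is the crux and effectively pins down $f_{1,1}$. For $i=1$ we have $\Lambda_1=Y_1$, and the claim $f_{1,1}(z_2/z_1)\,Y_1(z_1)Y_1(z_2)={:}Y_1(z_1)Y_1(z_2){:}$ is equivalent, via $\exp\big({-}\sum_m\tfrac1m a^m\big)=1-a$ and its elliptic refinement, to the single matrix identity
\[
\big(B(m)^{-1}\big)_{1,1}=\frac{[Nm]_x-[(N-1)m]_x}{[(N+1)m]_x-[Nm]_x},
\]
which I would establish by computing the minors of the tridiagonal $q$-Cartan matrix $B(m)$ and simplifying with $q$-number identities such as $[m]_x[3m]_x=[2m]_x^2-[m]_x^2$. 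I then have to show the self-contraction of every $\Lambda_i$ with $i\neq0$ equals that of $Y_1$. This follows from the recursion $\Lambda_{k+1}(z)={:}\Lambda_k(z)A_k\big(x^{-k+1}z\big)^{-1}{:}$: inserting $A_k^{-1}$ alters the exponent only through cross terms proportional to $C(m)\big(B_{k,k}(m)-x^m-x^{-m}\big)$ together with the neighbouring off-diagonal contraction, and these cancel because $B_{k,k}(m)=[2m]_x/[m]_x=x^m+x^{-m}$ and $B_{k,k+1}(m)=-1$ for $1\leq k\leq N-1$. At the step $k=N$ the diagonal entry is instead $B_{N,N}(m)=x^m+x^{-m}-1$, and this single defect, together with the scalar $\big[r-\frac{1}{2}\big]_x/\big[\frac{1}{2}\big]_x$, is precisely what produces the extra factor $\Delta(z_2/z_1)$ in the $\Lambda_0(z_1)\Lambda_0(z_2)$ relation.

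Third, for the off-diagonal cases I would expand each $\Lambda_i(z_1)\Lambda_j(z_2)$ as the product of its elementary pairwise contractions. Because consecutive indices in $J_N$ share all but one of their $A$-factors, nearly everything cancels against $f_{1,1}$, leaving a single factor whose argument is fixed by the accumulated $x$-shifts: $\Delta\big(x^{-1}z_2/z_1\big)$ when $i\prec j$ and $\Delta\big(xz_2/z_1\big)$ in the opposite order. The genuinely new case is $\Lambda_k$ against $\Lambda_{\bar k}$, where the barred current runs through the full reflected chain $A_N,\dots,A_k$; here two uncancelled factors survive and must assemble into $\Delta\big(x^{-1}z_2/z_1\big)\Delta\big(x^{-2N-2+2k}z_2/z_1\big)$, the second argument recording how far the reflected block is displaced. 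I expect the main obstacle to be exactly this bookkeeping of $x$-powers through the reflected block, together with the inverse-matrix identity above; once both are in hand the six relations follow by direct substitution, and the Lemma reduces to the (lengthy but routine) normal-ordering algebra of the type collected in Appendices~\ref{appendix:normal} and~\ref{appendix:exchange}.
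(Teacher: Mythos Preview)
Your approach is the paper's: its proof is the single line ``Using (\ref{appendix:A}) and (\ref{appendix:Y}), we obtain the normal ordering rules (\ref{eqn:Lambda}),'' meaning that once the elementary contractions $\langle Y_1Y_1\rangle=f_{1,1}^{-1}$, $\langle Y_1A_i\rangle$, $\langle A_iA_j\rangle$ of Appendix~\ref{appendix:normal} are tabulated (and your inverse-matrix identity for $\big(B(m)^{-1}\big)_{1,1}$ is exactly what underlies the first of these), the six relations follow by multiplying contractions along the recursive definition~(\ref{def:Lambda}).

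One correction, however: the zero-mode prefactors $x^{ry_i(0)}$, $x^{ra_i(0)}$ are \emph{not} the source of the shifts $x^{\pm1}$ in the arguments of~$\Delta$. Since $[a_i(0),a_j(m)]=[y_i(0),a_j(m)]=0$ for all~$m$ and no~$Q_j$ appears in any~$\Lambda_i$, these prefactors commute with everything in sight and contribute nothing to the contraction. The shifts come instead from the $x$-shifted arguments already built into~(\ref{def:Lambda}): for instance $\Lambda_2(z)={:}Y_1(z)A_1(x^{-1}z)^{-1}{:}$, so that $\langle Y_1(z_1)A_1(x^{-1}z_2)^{-1}\rangle=\Delta(x^{-1}z_2/z_1)$ is what supplies the factor in the first line of~(\ref{eqn:Lambda}). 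This does not affect your overall strategy, but tracking the wrong source for the shifts would make the bookkeeping in the reflected block $\Lambda_k$ versus $\Lambda_{\bar k}$ harder than it needs to be.
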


\begin{proof}
Using (\ref{appendix:A}) and (\ref{appendix:Y}),
we obtain the normal ordering rules (\ref{eqn:Lambda}).
\end{proof}

\begin{Lemma}
%\label{lemma:4-2}
The $\Lambda_i(z)$, $i \in J_N$, satisfy
\begin{gather}
{:}\Lambda_0(z)\Lambda_0(xz){:}=\Delta(1){:}\Lambda_N(z)\Lambda_{\bar{N}}(xz){:},
\label{eqn:fusion-Lambda1}
\\
{:}\Lambda_1(z)\Lambda_{\bar{1}}\big(x^{2N+1}z\big){:}=1,
\label{eqn:fusion-Lambda2}
\\
{:}\Lambda_k(z)\Lambda_{\bar{k}}(x^{2N-2k+3}z){:}=
{:}\Lambda_{k-1}(z)\Lambda_{\overline{k-1}}\big(x^{2N-2k+3}z\big){:},\qquad
2\leq k \leq N.
\label{eqn:fusion-Lambda3}
\end{gather}
\end{Lemma}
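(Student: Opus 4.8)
The plan is to treat all three identities as statements about normally ordered products only: on both sides every operator already sits inside a single normal ordering, so no operator is moved past another, no structure function is generated, and each identity reduces to comparing, on the two sides, the combined non-zero-mode oscillator content, the zero-mode content, and the overall scalar of the resulting vertex operator. Throughout I use that normal ordering is symmetric in its arguments, so that the factors $Y_1$ and $A_j^{-1}$ may be freely permuted inside ${:}\cdots{:}$, and that a product of normal-ordered exponentials normal-orders to the exponential of the sum of the exponents, the prefactors $x^{r a_i(0)}$, $x^{r y_1(0)}$ of \eqref{def:A} and \eqref{def:Y} merely adding in the exponent.

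Relations \eqref{eqn:fusion-Lambda3} and \eqref{eqn:fusion-Lambda1} are then essentially formal. For \eqref{eqn:fusion-Lambda3} I would substitute the recursion \eqref{def:Lambda} in the forms $\Lambda_k(z)={:}\Lambda_{k-1}(z)A_{k-1}(x^{-k+1}z)^{-1}{:}$ and $\Lambda_{\overline{k-1}}(z)={:}\Lambda_{\overline{k}}(z)A_{k-1}(x^{-2N+k-2}z)^{-1}{:}$; both sides then become the normal-ordered product of the three factors $\Lambda_{k-1}(z)$, $\Lambda_{\overline{k}}(x^{2N-2k+3}z)$ and $A_{k-1}(x^{-k+1}z)^{-1}$, the only point being that the $A_{k-1}$ argument produced on the right, $x^{-2N+k-2}\cdot x^{2N-2k+3}z=x^{-k+1}z$, coincides with the one on the left. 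For \eqref{eqn:fusion-Lambda1} the same unfolding of $\Lambda_0$ and $\Lambda_{\overline{N}}$ through \eqref{def:Lambda} reduces both sides to the common product ${:}\Lambda_N(z)\Lambda_N(xz)A_N(x^{-N}z)^{-1}A_N(x^{-N+1}z)^{-1}{:}$, the left side carrying the scalar $\bigl([r-\frac12]_x/[\frac12]_x\bigr)^2$ and the right side $\Delta(1)$ (the two scalar prefactors inside $\Lambda_{\overline{N}}$ cancelling). It then remains only to check the scalar identity $\Delta(1)=\bigl([r-\frac12]_x/[\frac12]_x\bigr)^2$, which follows by factoring the numerator and denominator of $\Delta(1)=\frac{(1-x^{2r-1})(1-x^{-2r+1})}{(1-x)(1-x^{-1})}$ as $-\bigl(x^{r-1/2}-x^{-(r-1/2)}\bigr)^2$ and $-\bigl(x^{1/2}-x^{-1/2}\bigr)^2$.

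The substantive case is \eqref{eqn:fusion-Lambda2}, which cannot be reached by reordering alone. Here I would unfold the recursion completely, obtaining $\Lambda_{\overline{1}}(z)={:}Y_1(z)\prod_{j=1}^{N-1}A_j(x^{-j}z)^{-1}A_N(x^{-N}z)^{-1}A_N(x^{-N-1}z)^{-1}\prod_{k=1}^{N-1}A_k(x^{-2N+k-1}z)^{-1}{:}$, so that ${:}\Lambda_1(z)\Lambda_{\overline{1}}(x^{2N+1}z){:}$ is a single normal-ordered exponential in which, for each $1\le j\le N$, the generator $A_j$ occurs at the two arguments $x^{2N+1-j}z$ and $x^{j}z$, alongside $Y_1(z)$ and $Y_1(x^{2N+1}z)$. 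The zero-mode content cancels because each $A_j$ enters twice while the two $Y_1$'s contribute $2r\,y_1(0)$, and $y_1(0)=\sum_{j=1}^N a_j(0)$, i.e., $(B^{-1})_{1j}(0)=1$ for all $j$; this last value I would read off by solving $B(0)v=e_1$, whose final equation $-v_{N-1}+v_N=0$ together with the interior rows forces all $v_j=1$. Writing $y_1(m)=\sum_j (B^{-1})_{1j}(m)\,a_j(m)$ from \eqref{eqn:y(m)}, matching the non-zero-mode content then reduces to the single $q$-number identity
\[
\bigl(1+x^{-(2N+1)m}\bigr)(B^{-1})_{1j}(m)=x^{-(2N+1-j)m}+x^{-jm},\qquad 1\le j\le N,\quad m\neq0,
\]
equivalently $(B^{-1})_{1j}(m)=\frac{x^{(2N+1-2j)m/2}+x^{-(2N+1-2j)m/2}}{x^{(2N+1)m/2}+x^{-(2N+1)m/2}}$. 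The main obstacle is precisely this verification: unlike the other two relations, \eqref{eqn:fusion-Lambda2} genuinely encodes the explicit first row of the inverse deformed Cartan matrix $B(m)^{-1}$, so one must confirm that the closed form for $(B^{-1})_{1j}(m)$ supplied by \eqref{eqn:y(m)} indeed satisfies $\sum_k B_{1k}(m)(B^{-1})_{kj}(m)=\delta_{1j}$, i.e., that the $y_1(m)$ of \eqref{eqn:y(m)} is dual to the $a_j(m)$.
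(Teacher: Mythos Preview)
Your proposal is correct and follows essentially the same route as the paper, which simply cites the definitions \eqref{def:Lambda} for \eqref{eqn:fusion-Lambda1} and \eqref{eqn:fusion-Lambda3}, and \eqref{def:A}, \eqref{def:Y}, \eqref{def:Lambda} for \eqref{eqn:fusion-Lambda2}; you have merely filled in the details of that unfolding. The $q$-identity you isolate for \eqref{eqn:fusion-Lambda2} is exactly the explicit first row $I_{1,j}(m)=\frac{[(N+1-j)m]_x-[(N-j)m]_x}{[(N+1)m]_x-[Nm]_x}$ recorded in the appendix (equivalently the rewriting $y_1(m)=\sum_j\frac{[(2N+1-j)m]_x+[jm]_x}{[(2N+1)m]_x}a_j(m)$ that the paper uses in the proof of \eqref{eqn:fusion-Lambda5}), so nothing further is required.
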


\begin{proof}
From (\ref{def:Lambda}), we obtain (\ref{eqn:fusion-Lambda1}) and (\ref{eqn:fusion-Lambda3}).
From (\ref{def:A}), (\ref{def:Y}) and (\ref{def:Lambda}), we obtain (\ref{eqn:fusion-Lambda2}).
\end{proof}

\begin{Lemma}
%\label{lemma:4-3}
The $\Delta(z)$ and $f_{i,j}(z)$ satisfy the following fusion relations:
\begin{gather}
f_{i,j}(z)=f_{j,i}(z)=\prod_{k=1}^i f_{1,j}\big(z^{-i-1+2k}z\big),\qquad
1\leq i \leq j,
\label{eqn:fusion-f1}
\\
f_{1,i}(z)=\Bigg(\prod_{k=1}^{i-1}\Delta\big(x^{-i+2k}z\big)\Bigg)^{-1}
\prod_{k=1}^i f_{1,1}\big(x^{-i-1+2k}z\big),\qquad
i \geq 2,
\label{eqn:fusion-f2}
\\
f_{i,2N+1}(z)=\prod_{k=1}^{i}\Delta\big(x^{-i-1+2k}z\big),\qquad i \geq 1,
\label{eqn:fusion-f3}
\\
f_{i, j}(z)=f_{i, 2N+1-j}(z)=
f_{2N+1-j, i}(z)=f_{j,i}(z),\qquad
i \geq 1, 1\leq j \leq N,
\label{eqn:fusion-f3(2)}
\\
f_{1,j}(z)f_{1,j}\big(x^{2N+1}z\big)=
\Delta\big(x^j z\big)\Delta\big(x^{2N+1-j}z\big),\qquad j \geq 1,
\label{eqn:fusion-f3(3)}
\\
f_{1,i}(z)f_{j,i}\big(x^{\pm (j+1)}z\big)=\begin{cases}
f_{j+1,i}\big(x^{\pm j}z\big)\Delta\big(x^{\pm i}z\big),& 1\leq i \leq j,
\\
f_{j+1,i}\big(x^{\pm j}z\big),& 1\leq j<i,
\end{cases}
\label{eqn:fusion-f4}
\\
f_{1,i}(z)f_{1,j}\big(x^{\pm (i+j)}z\big)=
f_{1,i+j}\big(x^{\pm j}z\big)\Delta\big(x^{\pm i}z\big),\qquad
i,j \geq 1,
\label{eqn:fusion-f5}
\\
f_{1,i}(z)f_{1,j}\big(x^{\pm (i-j-2k)}z\big)=
f_{1,i-k}\big(x^{\mp k}z\big)f_{1,j+k}\big(x^{\pm (i-j-k)}z\big),\qquad
i,j,i-k,j+k \geq 1.
\label{eqn:fusion-f6}
\end{gather}
\end{Lemma}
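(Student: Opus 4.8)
The plan is to reduce every identity to an elementary Laurent-polynomial identity in the single variable $u=x^m$, exploiting that $\Delta(z)$ and $f_{i,j}(z)$ are built from the same exponential kernel. First I would record that, by $\exp\bigl(-\sum_{m\ge1}\frac1m a^m\bigr)=1-a$,
\[
\Delta(z)=\exp\Bigl(-\sum_{m=1}^\infty\frac1m[(r-1)m]_x[rm]_x\bigl(x-x^{-1}\bigr)^2 z^m\Bigr),
\]
since $[(r-1)m]_x[rm]_x\bigl(x-x^{-1}\bigr)^2=x^{(2r-1)m}+x^{-(2r-1)m}-x^m-x^{-m}$. Thus $\Delta(z)$ carries exactly the same prefactor $\frac1m[(r-1)m]_x[rm]_x\bigl(x-x^{-1}\bigr)^2$ appearing in (\ref{def:fij}). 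Writing $f_{i,j}\bigl(x^sz\bigr)$ and $\Delta\bigl(x^sz\bigr)$ in this common exponential form, any relation of the stated type is equivalent, for each fixed $m\ge1$, to a rational identity obtained by inserting a factor $u^s:=x^{sm}$ for each shift $z\mapsto x^sz$, adding symbols for products and subtracting for inverses, and cancelling the universal kernel.

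Setting $\langle\ell\rangle:=x^{\ell m}-x^{-\ell m}$, so that $[\ell m]_x=\langle\ell\rangle/\bigl(x-x^{-1}\bigr)$, the symbol attached to $f_{i,j}\bigl(x^sz\bigr)$ is $u^sg_{i,j}$ and that of $\Delta\bigl(x^sz\bigr)$ is $u^s$, where
\[
g_{i,j}=\frac{\langle\min(i,j)\rangle\bigl(\langle N+1-\max(i,j)\rangle-\langle N-\max(i,j)\rangle\bigr)}{\langle1\rangle\bigl(\langle N+1\rangle-\langle N\rangle\bigr)}.
\]
Two computations then drive everything. First, the telescoping sum $\sum_{k=1}^i u^{2k-i-1}=\langle i\rangle/\langle1\rangle$ gives (\ref{eqn:fusion-f1}) at once, since the right-hand symbol equals $g_{1,j}\langle i\rangle/\langle1\rangle=g_{i,j}$, while the manifest $i\leftrightarrow j$ symmetry of $g_{i,j}$ gives $f_{i,j}=f_{j,i}$. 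Second, the factorisations $\langle N+1\rangle-\langle N\rangle=(u-1)\bigl(u^N+u^{-N-1}\bigr)$ and $\langle N+1-a\rangle-\langle N-a\rangle=(u-1)\bigl(u^{N-a}+u^{a-N-1}\bigr)$ collapse the single-index symbol to the monomial $g_{1,a}=\bigl(u^{N-a}+u^{a-N-1}\bigr)/\bigl(u^N+u^{-N-1}\bigr)$. With these in hand, (\ref{eqn:fusion-f2}) and (\ref{eqn:fusion-f3}) follow by resumming geometric series, while (\ref{eqn:fusion-f3(3)}), (\ref{eqn:fusion-f5}) and (\ref{eqn:fusion-f6}) reduce to finite monomial matchings after clearing the common denominator $u^N+u^{-N-1}$, and the boundary evaluation $g_{i,2N+1}=\langle i\rangle/\langle1\rangle$ absorbs the index $2N+1$.

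For (\ref{eqn:fusion-f3(2)}) I would isolate the reflection identity $\langle N+1-(2N+1-a)\rangle-\langle N-(2N+1-a)\rangle=\langle N+1-a\rangle-\langle N-a\rangle$, which says the $\max$-factor is invariant under $a\mapsto2N+1-a$; combined with $f_{i,j}=f_{j,i}$ this produces the four-fold chain, the point being to reflect whichever index attains the maximum so that the $\min$ is preserved. Finally (\ref{eqn:fusion-f4}) is the one step genuinely needing a case split: for $1\le i\le j$ one has $\min(j,i)=\min(j+1,i)=i$, and after removing the factor $u-1$ the claim reduces to the monomial identity $\langle1\rangle\psi(i)+\langle i\rangle\psi(j)u^{j+1}=\langle i\rangle\psi(j+1)u^j+u^i\langle1\rangle\psi(0)$ with $\psi(a)=u^{N-a}+u^{a-N-1}$; for $1\le j<i$ the common factor cancels and one is left with $\langle1\rangle+\langle j\rangle u^{j+1}=\langle j+1\rangle u^j$. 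The two sign choices $\pm$ are interchanged by $u\mapsto u^{-1}$, so each case is verified only once.

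The main obstacle I anticipate is bookkeeping rather than anything conceptual: because $g_{i,j}$ is piecewise through $\min$ and $\max$, the general-index relation (\ref{eqn:fusion-f4}) and the reflection (\ref{eqn:fusion-f3(2)}) force careful tracking of which argument is maximal as the indices cross the values $j$, $j+1$, $N$, and $2N+1$, and of the index ordering under which each reflection is applied, since reflecting the non-extremal index can spuriously break the identity. Once the closed form $g_{1,a}=\bigl(u^{N-a}+u^{a-N-1}\bigr)/\bigl(u^N+u^{-N-1}\bigr)$ and the reflection symmetry of the $\max$-factor are established, every remaining step is a finite comparison of Laurent monomials in $u$.
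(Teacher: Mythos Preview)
Your approach is correct and is essentially the same strategy the paper uses: both arguments pass to the exponential form, cancel the universal kernel $\frac1m[(r-1)m]_x[rm]_x(x-x^{-1})^2$, and reduce each identity to an elementary relation in $u=x^m$. The paper carries this out explicitly only for (\ref{eqn:fusion-f2}), using the telescoping identity
\[
[(a-1)m]_x\sum_{k=1}^i x^{(-i+2k-1)m}-[am]_x\sum_{k=1}^{i-1}x^{(-i+2k)m}=[(a-i)m]_x,
\]
and then simply asserts that (\ref{eqn:fusion-f1}), (\ref{eqn:fusion-f3}), (\ref{eqn:fusion-f3(2)}), (\ref{eqn:fusion-f3(3)}) follow from the definitions and that (\ref{eqn:fusion-f4})--(\ref{eqn:fusion-f6}) are consequences of (\ref{eqn:fusion-f1}) and (\ref{eqn:fusion-f2}).

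Your write-up is more explicit: you introduce the symbol $g_{i,j}$, isolate the closed form $g_{1,a}=(u^{N-a}+u^{a-N-1})/(u^N+u^{-N-1})$ via the factorisation $\langle N+1\rangle-\langle N\rangle=(u-1)(u^N+u^{-N-1})$, and then verify each relation as a finite Laurent-monomial identity. This is a genuine improvement in clarity over the paper's terse ``straightforward calculation,'' and your case analysis for (\ref{eqn:fusion-f4}) is exactly what the paper leaves implicit. Your caution about (\ref{eqn:fusion-f3(2)})---that one must reflect the index that actually attains the maximum---is well placed: the reflection $a\mapsto 2N+1-a$ only fixes the $\max$-factor, so the identity $f_{i,j}=f_{i,2N+1-j}$ is immediate when $i\le j$, while for $j<i$ one should instead read the chain via $f_{i,j}=f_{j,i}=f_{2N+1-j,i}$, which is how it is used downstream in the paper.
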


\begin{proof}
We show (\ref{eqn:fusion-f2}) here. From the definitions, we have
\begin{gather*}
\Bigg(\prod_{k=1}^{i-1}\Delta_1\big(x^{-i+2k}z\big)\Bigg)^{-1}
\prod_{k=1}^i f_{1,1}\big(x^{-i-1+2k}z\big)
\\ \qquad
{}=\exp\Biggl(-\sum_{m=1}^\infty
\frac{1}{m}\frac{[rm]_x[(r-1)m]_x}{[(N+1)m]_x-[Nm]_x}\big(x-x^{-1}\big)^2
\Biggl\{([Nm]_x-[(N-1)m]_x)
\\ \qquad\phantom{=}
{}\times\sum_{k=1}^i x^{(-i+2k-1)m}-([(N+1)m]_x-[Nm]_x)
\sum_{k=1}^{i-1}x^{(-i+2k)m}\Biggr\}z^m\Biggr).
\end{gather*}
Using the relation
\begin{gather*}
[(a-1)m]_x\sum_{k=1}^i x^{(-i+2k-1)m}-[a m]_x\sum_{k=1}^{i-1}x^{(-i+2k)m}=[(a-i)m]_x,\qquad a=N, N+1,
\end{gather*}
we obtain $f_{1,i}(z)$ in the right hand side of the previous formula.
We obtain (\ref{eqn:fusion-f1}), (\ref{eqn:fusion-f3}), (\ref{eqn:fusion-f3(2)}), and (\ref{eqn:fusion-f3(3)})
by straightforward calculation from the definitions.
Using (\ref{eqn:fusion-f1}) and (\ref{eqn:fusion-f2}),
we obtain the relations (\ref{eqn:fusion-f4}), (\ref{eqn:fusion-f5}), and (\ref{eqn:fusion-f6}).
\end{proof}

\begin{Lemma}
%\label{lemma:4-4}
The following relation holds for $A \subset J_N$:
\begin{gather}
\overrightarrow{\Lambda}_{\overline{J_N \setminus A}}(z)=
\overrightarrow{\Lambda}_{A}(z)\times
\begin{cases}
\dfrac{\big[r-\frac{1}{2}\big]_x}{\big[\frac{1}{2}\big]_x},& 0 \notin A,
\\[2.5ex]
\dfrac{\big[\frac{1}{2}\big]_x}{\big[r-\frac{1}{2}\big]_x},& 0 \in A.
\end{cases}
\label{eqn:fusion-Lambda4}
\end{gather}
\end{Lemma}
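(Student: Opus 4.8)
The plan is to reduce (\ref{eqn:fusion-Lambda4}) to an identity of normal-ordered vertex operators and to split it into three independent checks. By (\ref{def:Lambda}) together with (\ref{def:A}) and (\ref{def:Y}), each $\Lambda_i(z)$ is a scalar constant times a normal-ordered exponential of the Heisenberg modes, and $\overrightarrow{\Lambda}_\Omega(z)$ is by definition the bare normal-ordered product of such factors, so no contraction constants are produced and both sides of (\ref{eqn:fusion-Lambda4}) are single normal-ordered exponentials. Hence the identity is equivalent to matching, between the two sides, (i) the product of scalar prefactors, (ii) the zero-mode operator, and (iii) for every $m\neq 0$ the operator coefficient of $z^{-m}$ in the oscillator part. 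First I would record the explicit shape of each factor by unfolding (\ref{def:Lambda}) recursively: $\Lambda_k(z)={:}Y_1(z)\prod_{j=1}^{k-1}A_j(x^{-j}z)^{-1}{:}$ for $1\leq k\leq N$, together with the analogous expressions for $\Lambda_0$ and $\Lambda_{\bar k}$, in which the constant of $\Lambda_0$ is carried along and the $A_N$-factor is doubled.

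The prefactor check is immediate. Inspecting (\ref{def:Lambda}), the index $0$ is the only one whose definition introduces a nontrivial scalar $\big[r-\frac{1}{2}\big]_x/\big[\frac{1}{2}\big]_x$; the constant in $\Lambda_{\bar N}$ exactly cancels it, so every $\Lambda_i$ with $i\neq 0$ has prefactor $1$. Therefore $\prod_{s\in\Omega}(\text{prefactor of }\Lambda_s)$ equals $\big[r-\frac{1}{2}\big]_x/\big[\frac{1}{2}\big]_x$ if $0\in\Omega$ and $1$ otherwise. Since $\bar 0=0$, one has $0\in\overline{J_N\setminus A}$ if and only if $0\notin A$, and so the ratio of the two prefactors is precisely $\big[r-\frac{1}{2}\big]_x/\big[\frac{1}{2}\big]_x$ when $0\notin A$ and its inverse when $0\in A$, which is exactly the dichotomy in (\ref{eqn:fusion-Lambda4}).

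It remains to match the oscillator and zero-mode content. Using $i\prec j\Leftrightarrow\bar j\prec\bar i$, the bar-complement product rewrites as $\overrightarrow{\Lambda}_{\overline{J_N\setminus A}}(z)={:}\prod_{p}\Lambda_{\bar t_p}\big(x^{b+1-2p}z\big){:}$, where $\{t_1\prec\cdots\prec t_b\}=J_N\setminus A$ and $b=2N+1-|A|$. Writing $\ell_i(m)$ for the coefficient of $z^{-m}$ in $\Lambda_i(z)$ and $\{s_1\prec\cdots\prec s_a\}=A$, the remaining content is the scalar identity
\[
\sum_{p}\ell_{\bar t_p}(m)\,x^{-(b+1-2p)m}=\sum_{p}\ell_{s_p}(m)\,x^{-(2p-a-1)m},\qquad m\in\mathbb{Z}.
\]
I would eliminate the fundamental-weight modes by substituting $y_1(m)=\sum_j \big(B(m)^{-1}\big)_{1,j}\,a_j(m)$ from (\ref{eqn:y(m)}), turning both sides into linear combinations of the root oscillators $a_j(m)$, and then compare the coefficient of each $a_j(m)$. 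The $z$-shifts $x^{2p-|\Omega|-1}$ assemble those coefficients into geometric sums, which collapse by the same $q$-integer identity $[(a-1)m]_x\sum_{k=1}^{i}x^{(-i+2k-1)m}-[am]_x\sum_{k=1}^{i-1}x^{(-i+2k)m}=[(a-i)m]_x$ used in the proof of (\ref{eqn:fusion-f2}); the zero-mode statement is the $m\to 0$ specialization, handled with $B_{i,j}(0)$.

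The main obstacle is the bookkeeping in this last step. For a general subset $A$ the position of an element of $A$ and the position of its partner in $J_N\setminus A$ are unrelated, so the shifts on the two sides are genuinely different combinatorial data; one must show that, after the involution $i\mapsto\bar i$ reverses the order and the doubled $A_N$-factors together with the special factor $\Lambda_0$ are accounted for, the weighted sums of the $\ell$-coefficients nevertheless telescope to the same value for every $j$ and $m$. Organizing this cleanly—for instance by splitting $A$ according to whether both, one, or neither of $i$ and $\bar i$ lie in $A$—is where the genuine work lies; the remaining checks are routine.
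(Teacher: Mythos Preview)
Your decomposition into scalar prefactor, zero mode, and oscillator content is sound, and the prefactor check is clean and correct. However, your route diverges substantially from the paper's, and the part you flag as ``where the genuine work lies'' is precisely what the paper organizes differently.

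The paper does \emph{not} attempt the oscillator identity for general $A$. Instead it proves only the case $A=\varnothing$, i.e.\ the scalar identity
${:}\Lambda_1(x^{-2N}z)\cdots\Lambda_{\bar 1}(x^{2N}z){:}=[r-\tfrac12]_x/[\tfrac12]_x$,
by exactly the kind of root-basis computation you describe (this is where the $q$-integer identity is used). For arbitrary $A$ the paper then multiplies this scalar relation by $\overrightarrow{\Lambda}_A$ at a suitably shifted argument and peels off one element of $A$ at a time using the fusion identities (\ref{eqn:fusion-Lambda1})--(\ref{eqn:fusion-Lambda3}); each step converts ${:}\overrightarrow{\Lambda}_{J_N\setminus S}(z)\Lambda_{s}(\cdot){:}$ into $\overrightarrow{\Lambda}_{J_N\setminus(S\cup\{\bar s\})}$, so after $|A|$ steps one lands on $\overrightarrow{\Lambda}_{\overline{J_N\setminus A}}$. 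The positions are tracked automatically by the shifts in the fusion relations, which is why no subset-dependent combinatorics appears.

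By contrast, your plan must verify, for each root index $j$ and each subset $A$, an equality of two sums whose terms are weighted by the positions of elements inside $A$ and inside $J_N\setminus A$. Those positions interact globally (removing one element reshuffles all later shifts), so the sums are not simple geometric series over an interval; the $q$-integer identity you cite handles consecutive runs, and it is not clear from your sketch how it closes the computation for an arbitrary pattern of gaps. In effect you would be re-deriving, term by term, the content of (\ref{eqn:fusion-Lambda1})--(\ref{eqn:fusion-Lambda3}) inside a single large calculation. This is doable, but the paper's two-step strategy isolates the analytic input (the $A=\varnothing$ identity) from the purely algebraic bookkeeping (the fusion relations, already proved), and reuses lemmas that are needed elsewhere in the paper anyway.
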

\begin{proof}
First, we {consider} the case $A=\varnothing$ and $\overline{J_N \setminus A}=J_N$.
In this case, (\ref{eqn:fusion-Lambda4}) {can be} rewritten as
\begin{gather}
{:}\Lambda_1\big(x^{-2N}z\big)\cdots \Lambda_{N}\big(x^{-2}z\big)
\Lambda_0(z)\Lambda_{\overline{N}}\big(x^2z\big)\cdots
\Lambda_{\overline{1}}\big(x^{2N}z\big){:}=\frac{\big[r-\frac{1}{2}\big]_x}{\big[\frac{1}{2}\big]_x}.
\label{eqn:fusion-Lambda5}
\end{gather}
Using (\ref{def:A}), (\ref{def:Y}), and (\ref{def:Lambda}),
the left side of (\ref{eqn:fusion-Lambda5}) can be written as
\begin{gather*}
\frac{\big[r-\frac{1}{2}\big]_x}{\big[\frac{1}{2}\big]_x}{:}\exp\Bigg(\sum_{m \neq 0}
\Bigg(\frac{[(2N+1)m]_x}{[m]_x}y_1(m)-\sum_{j=1}^N
\frac{[(2N+1-j)m]_x+[jm]_x}{[m]_x}a_j(m)\Bigg)z^{-m}\Bigg){:}.
\end{gather*}
Using the relation ${\frac{[(2N+1-j)m]_x+[jm]_x}{[(2N+1)m]_x}=
\frac{[(N+1-j)m]_x-[(N-j)m]_x}{[(N+1)m]_x-[Nm]_x}}$,
the generators $y_1(m)$ in (\ref{eqn:y(m)}) are rewritten as
${y_1(m)=\sum_{j=1}^N
\frac{[(2N+1-j)m]_x+[jm]_x}{[(2N+1)m]_x}a_j(m)}$.
Hence, we obtain (\ref{eqn:fusion-Lambda5}).

Next, we show (\ref{eqn:fusion-Lambda4}) for $A \subset J_N$.
Cases $(i)$, $0 \in A$ and~$(ii)$, $0 \notin A$ are proved separately.
First, we study case $(i)$, $0 \in A$. Let
\begin{gather*}
A=\{k_1, \dots,k_K, 0, \overline{l_L}, \dots, \overline{l_1} \mid
k_1\prec \cdots \prec k_K \prec 0 \prec\overline{l_L} \prec \cdots \prec \overline{l_1},\,
1\leq K, L \leq N\}.
\end{gather*}
Multiplying (\ref{eqn:fusion-Lambda5}) by $\overrightarrow{\Lambda_A}\big(x^{L-K+1}z\big)$
on the left, and using (\ref{eqn:Lambda}) and (\ref{eqn:fusion-f3}) yields
\begin{gather}
{:}\overrightarrow{\Lambda}_{J_N}(z)
\overrightarrow{\Lambda}_A\big(x^{L-K+1}z\big){:}=\frac{\big[r-\frac{1}{2}\big]_x}{\big[\frac{1}{2}\big]_x}
\overrightarrow{\Lambda}_A\big(x^{L-K+1}z\big).
\label{eqn:fusion-Lambda6}
\end{gather}
Using (\ref{eqn:fusion-Lambda1}), (\ref{eqn:fusion-Lambda2}) and
(\ref{eqn:fusion-Lambda3}) yields
\begin{gather*}
{:}\overrightarrow{\Lambda}_{J_N}(z)\Lambda_0(xz){:}
=\Delta(1)\overrightarrow{\Lambda}_{J_N{\setminus} \{0\}}(xz),
\\
{:}\overrightarrow{\Lambda}_{J_N{\setminus} \{0\}}(z)
\Lambda_{\overline{l}_L}\big(x^2z\big){:}
=\overrightarrow{\Lambda}_{J_N{\setminus} \{l_L, 0\}}(xz),
\\
{:}\overrightarrow{\Lambda}_{J_N{\setminus} \{l_{L-s+1},\dots, l_{L}, 0\}}(z)
\Lambda_{\overline{l}_{L-s}}\big(x^{2+s}z\big){:}
=\overrightarrow{\Lambda}_{J_N{\setminus} \{l_{L-s},\dots, l_L, 0\}}(xz),\qquad
1\leq s \leq L-1,
\\
{:}\overrightarrow{\Lambda}_{J_N{\setminus} \{l_1, \dots, l_L, 0\}}(z)
\Lambda_{{k}_K}\big(x^{-L-2}z\big){:}
=\overrightarrow{\Lambda}_{J_N{\setminus} \{l_1,\dots, l_L, 0, \overline{k}_K\}}\big(x^{-1}z\big),
\\
{:}\overrightarrow{\Lambda}_{J_N{\setminus} \{l_1,\dots, l_{L}, 0, \overline{k}_K, \dots, \overline{k}_{K-s+1}\}}(z)
\Lambda_{{k}_{K-s}}\big(x^{-L-2-s}z\big){:}
=\overrightarrow{\Lambda}_{J_N{\setminus} \{l_{1},\dots, l_L, 0, \overline{k}_K,\dots, \overline{k}_{K-s}\}}\big(x^{-1}z\big),
\\ \qquad
1\leq s \leq K-1.
\end{gather*}
Using the above five relations yields
\begin{gather*}
{:}\overrightarrow{\Lambda}_{J_N}(z)\overrightarrow{\Lambda}_A\big(x^{L-K+1}z\big){:}=
\Delta(1)\overrightarrow{\Lambda}_{\overline{J_N \setminus A}}\big(x^{L-K+1}z\big).
\end{gather*}
From (\ref{eqn:fusion-Lambda6})
we obtain (\ref{eqn:fusion-Lambda4}) for $0 \in A$.

Next, we study case $(ii)$, $0 \notin A$. The proof for this case is similar to that of case $(i)$.
Let
\begin{gather*}
A=\big\{k_1, \dots,k_K, \overline{l_L}, \dots, \overline{l_1} \mid
k_1\prec \cdots \prec k_K \prec\overline{l_L} \prec \cdots \prec \overline{l_1},\,
1\leq K, L \leq N\big\}.
\end{gather*}
Multiplying (\ref{eqn:fusion-Lambda5}) by
$\overrightarrow{\Lambda_A}\big(x^{L-K}z\big)$
on the left, and using (\ref{eqn:Lambda}) and (\ref{eqn:fusion-f3}) yields
\begin{gather}
{:}\overrightarrow{\Lambda}_{J_N}(z)
\overrightarrow{\Lambda}_A\big(x^{L-K}z\big){:}=\frac{\big[r-\frac{1}{2}\big]_x}{\big[\frac{1}{2}\big]_x}
\overrightarrow{\Lambda}_A\big(x^{L-K}z\big).
\label{eqn:fusion-Lambda7}
\end{gather}
Using (\ref{eqn:fusion-Lambda1}), (\ref{eqn:fusion-Lambda2}), and
(\ref{eqn:fusion-Lambda3}) yields
\begin{gather*}
{:}\overrightarrow{\Lambda}_{J_N}(z)\Lambda_{\overline{l}_L}(xz){:}
=\overrightarrow{\Lambda}_{J_N{\setminus} \{l_L\}}(xz),
\\
{:}\overrightarrow{\Lambda}_{J_N{\setminus}
\{l_{L-s+1},\dots, l_{L}\}}(z)\Lambda_{\overline{l}_{L-s}}\big(x^{1+s}z\big){:}
=\overrightarrow{\Lambda}_{J_N{\setminus}
\{l_{L-s},\dots, l_L\}}(xz),\qquad
1\leq s \leq L-1,
\\
{:}\overrightarrow{\Lambda}_{J_N{\setminus} \{l_1, \dots, l_L\}}(z)
\Lambda_{{k}_K}\big(x^{-L-1}z\big){:}
=\overrightarrow{\Lambda}_{J_N{\setminus} \{l_1,\dots, l_L, \overline{k}_K\}}\big(x^{-1}z\big),
\\
{:}\overrightarrow{\Lambda}_{J_N{\setminus} \{l_1,\dots, l_{L},
\overline{k}_K, \dots, \overline{k}_{K-s+1}\}}(z)
\Lambda_{{k}_{K-s}}(x^{-L-1-s}z){:}
=\overrightarrow{\Lambda}_{J_N{\setminus} \{l_1,\dots, l_L, \overline{k}_K,\dots, \overline{k}_{K-s}\}}\big(x^{-1}z\big),
\\
\qquad
1\leq s \leq K-1.
\end{gather*}
Using the above five relations yields
\begin{gather*}
{:}\overrightarrow{\Lambda}_{J_N}(z)\overrightarrow{\Lambda}_A\big(x^{L-K}z\big){:}=
\overrightarrow{\Lambda}_{\overline{J_N \setminus A}}\big(x^{L-K}z\big).
\end{gather*}
From (\ref{eqn:fusion-Lambda7}) we obtain (\ref{eqn:fusion-Lambda4}) for $0 \notin A$.
\end{proof}

\begin{Lemma}
%\label{lemma:4-5}
The following relation holds for $A \subset J_N$ with $|A|\leq N$:
\begin{gather}
\frac{d_{{J_N \setminus A}}(x,r)}{d_{A}(x,r)}=\prod_{k=1}^{N-|A|}
\Delta\big(x^{2k}\big){\times}
\begin{cases}
\Delta(1),& 0 \in A,
\\
1,&0 \notin A.
\end{cases}
\label{eqn:d}
\end{gather}
\end{Lemma}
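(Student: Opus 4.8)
The plan is to turn the ratio of coefficients into a lattice-path count and cancel almost everything using a palindromic symmetry of $\Delta$. The one analytic input I would record first is that $\Delta(z)=\Delta(z^{-1})$ as rational functions: each quadratic factor satisfies $z^{2}\big(1-az^{-1}\big)\big(1-a^{-1}z^{-1}\big)=(1-az)\big(1-a^{-1}z\big)$, so the surplus powers of $z$ cancel between numerator and denominator, giving in particular $\Delta\big(x^{2m}\big)=\Delta\big(x^{-2m}\big)$ for all $m$; this is precisely what permits the clean product on the right-hand side.

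Next I would rewrite $d_\Omega$ through position gaps. If a pair $\{k,\overline{k}\}\subset\Omega$ occupies positions $p<q$ in the $\prec$-ordered list of $\Omega$, then $q-p$ is the number of elements of $\Omega$ in $(k,\overline{k}]$, that is $1+[\,0\in\Omega\,]+\sum_{k'>k}\#\big(\{k',\overline{k'}\}\cap\Omega\big)$, so this pair contributes $\Delta\big(x^{E^{\Omega}_{k}}\big)$ with $E^{\Omega}_{k}=2\big([\,0\in\Omega\,]+\sum_{k'>k}\#(\{k',\overline{k'}\}\cap\Omega)+k-N\big)$. Calling an index $k$ \emph{both-in}, \emph{both-out}, or \emph{split} according to whether $\{k,\overline{k}\}$ lies in $A$, in $C:=J_N\setminus A$, or is divided between them, and setting $\varepsilon=[\,0\in A\,]$ and $D_k=\#\{k'>k:\ k'\ \text{both-in}\}-\#\{k'>k:\ k'\ \text{both-out}\}$, the split indices cancel and the exponents collapse to $E^{A}_{k}=2(\varepsilon+D_k)$ for both-in $k$ and $E^{C}_{k}=2(1-\varepsilon-D_k)$ for both-out $k$.

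Then I would read $k=N,N-1,\dots,1$ as a path with steps $+1,-1,0$ at both-in, both-out, split indices, so that $D_k$ is the height just before step $k$; the path starts at height $0$ and ends at $|P|-|Q|=-(N-|A|+\varepsilon)=:-M$, where $P,Q$ are the sets of both-in and both-out indices. An up-step crossing the edge between heights $h$ and $h+1$ records $\Delta\big(x^{2(\varepsilon+h)}\big)$ in $d_A$, while a down-step crossing that same edge (from $h+1$ to $h$) records $\Delta\big(x^{2(1-\varepsilon-(h+1))}\big)=\Delta\big(x^{-2(\varepsilon+h)}\big)$ in $d_C$; by the symmetry these are equal, so each up/down pair across a common edge cancels in $d_C/d_A$. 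The signed crossing number of the edge $\{h,h+1\}$ is $[\,-M\ge h+1\,]-[\,0\ge h+1\,]$, equal to $-1$ for $h\in\{-1,\dots,-M\}$ and $0$ otherwise; hence no up-step is left over (so $d_A$ cancels entirely) and exactly one down-step survives over each edge $h=-1,\dots,-M$, contributing $\Delta\big(x^{2(-\varepsilon-h)}\big)$. As $h$ runs through $-1,\dots,-M$ these exponents are $2(1-\varepsilon),\dots,2(M-\varepsilon)$, i.e.\ $\varepsilon$ copies of $\Delta(1)$ together with $\Delta(x^{2}),\dots,\Delta\big(x^{2(N-|A|)}\big)$, which is precisely $\prod_{k=1}^{N-|A|}\Delta\big(x^{2k}\big)$ multiplied by $\Delta(1)$ when $0\in A$ and by $1$ otherwise.

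The delicate but routine part I would verify with care is the gap computation and the disappearance of split indices from both $D_k$ and the surviving factors; the conceptual heart is the identification of the common edge crossed by a matched up/down pair, where $\Delta(z)=\Delta(z^{-1})$ turns the two a priori unrelated factors into reciprocals and so collapses the entire ratio to the stated product.
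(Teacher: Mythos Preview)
Your proof is correct and takes a genuinely different route from the paper's. The paper proceeds by induction on $N$: it introduces the shift map $\sigma\colon J_N\to J_{N+1}$ sending $k\mapsto k{+}1$, $0\mapsto 0$, $\overline{k}\mapsto\overline{k{+}1}$, and proves the identity for $J_{N+1}$ from the case $J_N$ by splitting into the four cases according to whether $1$ and $\overline{1}$ lie in $A$, tracking how one extra $\Delta$-factor is gained or lost in passing from $A\cap\sigma(J_N)$ to $A$. That argument is short but opaque; it never explains why the answer depends only on $|A|$ and on whether $0\in A$.

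Your argument is more structural. By expressing the gap $q-p$ through the counts of both-in, both-out, and split indices above $k$, you make the split indices drop out of the exponents entirely, reducing everything to the height function $D_k$; the palindromic symmetry $\Delta(z)=\Delta(z^{-1})$ then turns the up-step and down-step contributions across a common edge into the \emph{same} factor, so the ratio is governed solely by the net edge crossings of a walk from $0$ to $-(N-|A|+\varepsilon)$. This immediately explains both the independence from the particular subset $A$ (only the terminal height matters) and the extra $\Delta(1)$ when $0\in A$ (it is the $h=-1$ edge contribution shifted by $\varepsilon$). The paper's induction gives a quick verification; your path-counting makes the identity essentially self-evident once the symmetry of $\Delta$ is noticed, and would likely adapt more cleanly to analogous coefficients in the other classical types mentioned in Section~\ref{section:5}.
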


\begin{proof}
We define the map $\sigma \colon J_N \to J_{N+1}$ by
\begin{gather*}
\sigma(j)=\begin{cases}
k+1,&j=k,\quad 1\leq k \leq N,
\\
0,&j=0,
\\
\overline{k+1},&j=\overline{k},\quad 1\leq k \leq N.
\end{cases}
\end{gather*}
For $T\subset J_N$ with
$|T|\leq N$, relation (\ref{eqn:d}) is rewritten as
\begin{gather*}
\frac{d_{{\sigma(J_N \setminus T)}}(x,r)}{d_{\sigma(T)}(x,r)}=\prod_{k=1}^{N-|T|}
\Delta\big(x^{2k}\big)\times
\begin{cases}
\Delta(1),& 0 \in T,
\\
1,&0 \notin T.
\end{cases}
\end{gather*}
Hence, the relation
\begin{gather}
\frac{d_{{(J_N \setminus B)\cap \sigma(J_N)}}(x,r)}{d_{B \cap \sigma(J_N)}(x,r)}=
\prod_{k=1}^{N-|B \cap \sigma(J_N)|}
\Delta\big(x^{2k}\big)\times
\begin{cases}
\Delta(1),& 0 \in B,\\
1,& 0 \notin B,
\end{cases}
\label{eqn:d2}
\end{gather}
for $B \subset J_{N+1}$ with $|B \cap \sigma(J_N)|\leq N$ holds
if relation (\ref{eqn:d}) for $A \subset J_N$ with $|A|\leq N$ is assumed.
Here, we used $|B\cap \sigma(J_N)|=|\sigma^{-1}(B\cap \sigma(J_N))|$.

We prove (\ref{eqn:d}) by induction on $N$.
First, we establish the base $N=1$ using case-by-case analysis.
For $A=\varnothing$, we obtain $d_A(x,r)=1$ and $d_{J_N \setminus A}(x,r)=\Delta\big(x^2\big)$.
For $A=\{1\}$, we obtain $d_A(x,r)=1$ and $d_{J_N \setminus A}(x,r)=1$.
For $A=\{0\}$, we obtain $d_A(x,r)=1$ and $d_{J_N \setminus A}(x,r)=\Delta(1)$.
For $A=\{\overline{1}\}$, we obtain $d_A(x,r)=1$ and $d_{J_N \setminus A}(x,r)=1$.
This implies that (\ref{eqn:d}) holds for $N=1$.

Next, we assume that relation (\ref{eqn:d}) holds for some $N$, and show (\ref{eqn:d})
for $N$ replaced by $N+1$.
Let $A \subset J_{N+1}$.
From the definition of $d_A(x,r)$, we obtain
\begin{align}
\frac{d_{J_N \setminus A}(x,r)}{d_A(x,r)}={}&
\frac{d_{(J_N \setminus A)\cap \sigma(J_N)}(x,r)}{d_{A \cap \sigma(J_N)}(x,r)}\notag
\\
&\times\begin{cases}
1,& 1\in A,\quad \overline{1} \notin A\quad {\rm or}\quad 1\notin A,\quad \overline{1} \in A,
\\
\Delta\big(x^{2(N-|J_N \setminus A|+1)}\big)^{-1},& 1,\quad \overline{1} \in A,
\\
\Delta\big(x^{2(N-|A|+1)}\big),& 1,\quad \overline{1} \notin A.
\end{cases}
\label{eqn:d3}
\end{align}
Cases $(i)$, $1\in A$, $\overline{1} \notin A$ (or $1\notin A$, $\overline{1} \in A$),
$(ii)$, $1$, $\overline{1} \in A$, and $(iii)$, $1$, $\overline{1} \notin A$ are proved separately.

First, we study case $(i)$,
$1\in A$, $\overline{1} \notin A$ (or $1\notin A$, $\overline{1} \in A$).
In this case, we obtain $|A \cap \sigma(J_N)|=|A|-1 \leq N$.
Hence, (\ref{eqn:d2}) holds with $B=A$.
Using (\ref{eqn:d2}), (\ref{eqn:d3}) and $|A \cap \sigma(J_N)|=|A|-1$ yields
(\ref{eqn:d}) with {$N$ replaced by $N+1$}.

Next, we study case $(ii)$, $1$, $\overline{1} \in A$.
In this case, we obtain $|A \cap \sigma(J_N)|=|A|-2 \leq N-1$.
Hence, (\ref{eqn:d2}) holds with $B=A$.
Using (\ref{eqn:d2}) and (\ref{eqn:d3}), $|A \cap \sigma(J_N)|=|A|-2$ and
$|J_N \setminus A|=2N+3-|A|$ yields (\ref{eqn:d}) with {$N$ replaced by $N+1$}.

Finally, we examine case $(iii)$, $1$, $\overline{1} \notin A$.
Case $(iii)$ is further subdivided into $(iii.1)$, $|A|\leq N$, $1$, $\overline{1} \notin A$ and $(iii.2)$, $|A|=N+1$, $1$, $\overline{1} \notin A$.

For the condition $(iii.1)$, we obtain $|A \cap \sigma(J_N)|=|A|\leq N$.
Hence, (\ref{eqn:d2}) holds with $B=A$.
Using (\ref{eqn:d2}), (\ref{eqn:d3}), and $|A \cap \sigma(J_N)|=|A|$ yields
(\ref{eqn:d}) with {$N$ replaced by $N+1$}.

For condition $(iii.2)$, we obtain $|(J_N \setminus A) \cap \sigma(J_N)|=N$.
Hence, (\ref{eqn:d2}) holds with $B=J_N \setminus A$.
Using (\ref{eqn:d2}) and (\ref{eqn:d3}), $|A|=N+1$ and
$|(J_N \setminus A) \cap \sigma(J_N)|=N$ yields
(\ref{eqn:d}) with {$N$ replaced by $N+1$}.
\end{proof}

\begin{proof}
Here we will show Proposition \ref{prop:3-1}.
Using (\ref{eqn:fusion-Lambda4}), (\ref{eqn:d}), and $d_{\overline{J_N \setminus \Omega_i}}(x,r)=d_{J_N \setminus \Omega_i}(x,r)$ yields
\begin{gather}
d_{\overline{J_N \setminus \Omega_i}}(x,r)\overrightarrow{\Lambda}_{\overline{J_N \setminus \Omega_i}}(z)=\frac{\big[r-\frac{1}{2}\big]_x}{\big[\frac{1}{2}\big]_x}
\prod_{k=1}^{N-|{\Omega_i}|}\Delta\big(x^{2k}\big) d_{{\Omega_i}}(x,r)
\overrightarrow{\Lambda}_{{\Omega_i}}(z).
\label{eqn:d4}
\end{gather}
{Adding {relations} (\ref{eqn:d4}) over all $\Omega_i \subset J_N$
for each fixed $i$, $0\leq i \leq N$, yields~(\ref{prop:duality})}.
\end{proof}

\subsection{Proof of Theorem \ref{thm:3-2}}

\begin{Lemma}\label{lemma:4-6}
The $W$-currents $T_j(z)$, $1\leq j \leq N$,
satisfy the set of quadratic relations
\begin{gather}
f_{1,j}\bigg(\frac{z_2}{z_1}\bigg)T_1(z_1)T_j(z_2)-
f_{j,1}\bigg(\frac{z_1}{z_2}\bigg)T_j(z_2)T_1(z_1)\notag
\\ \qquad
{}=c(x,r)\bigg(\delta\bigg(\frac{x^{-j-1}z_2}{z_1}\bigg)T_{j+1}(x^{-1}z_2)
-\delta\bigg(\frac{x^{j+1}z_2}{z_1}\bigg)T_{j+1}(x z_2)\bigg)
+c(x,r)\Delta\big(x^{2N+2-2j}\big)\notag
\\ \qquad\phantom{=}
{}\times\!\bigg(\!\delta\bigg(\frac{x^{-2N+j-2}z_2}{z_1}\bigg)T_{j-1}\big(x^{-1}z_2\big)
-\delta\bigg(\frac{x^{2N-j+2}z_2}{z_1}\bigg)T_{j-1}(x z_2)\bigg),
\quad\!
1\leq j \leq N.\!\!
\label{eqn:quadratic2}
\end{gather}
Here, we use $f_{i,j}(z)$ {introduced} in $(\ref{def:fij})$.
\end{Lemma}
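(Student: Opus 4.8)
The plan is to compute both orderings $f_{1,j}(z_2/z_1)T_1(z_1)T_j(z_2)$ and $f_{j,1}(z_1/z_2)T_j(z_2)T_1(z_1)$ as explicit sums of scalar coefficient functions times normal-ordered products, and then to read off the right-hand side from their difference by means of the delta-function identities satisfied by $\Delta(z)$. Since a normal-ordered product does not depend on the order in which the operators are written, the two orderings share the same family of normal-ordered products ${:}\Lambda_i(z_1)\overrightarrow{\Lambda}_{\Omega_j}(z_2){:}$; only their scalar prefactors differ, and it is precisely these differences that collapse onto delta functions supported at the points appearing in \eqref{eqn:quadratic2}.

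First I would expand $T_1(z_1)T_j(z_2)=\sum_{i\in J_N}\sum_{\Omega_j}d_{\Omega_j}(x,r)\Lambda_i(z_1)\overrightarrow{\Lambda}_{\Omega_j}(z_2)$ and normal order each summand. Writing $f_{1,j}$ through the fusion relation \eqref{eqn:fusion-f2} converts the prefactor $f_{1,j}(z_2/z_1)$ into a product of $f_{1,1}$'s at shifted arguments divided by $\Delta$'s; the $f_{1,1}$'s then cancel against the normal-ordering rules \eqref{eqn:Lambda}, leaving each summand as a rational function of $z_2/z_1$ (a product and ratio of $\Delta$'s at powers of $x$) times ${:}\Lambda_i\overrightarrow{\Lambda}_{\Omega_j}{:}$. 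The shape of this rational function depends on where the single index $i$ sits relative to the elements of $\Omega_j$: whether $i$ can be inserted into $\Omega_j$ to enlarge it (no pole, or a single $\Delta$-pole from the neighbouring factor), whether $i$ coincides with an element of $\Omega_j$, and, crucially, whether the conjugate $\overline{i}$ already occurs in $\Omega_j$, which by the last two lines of \eqref{eqn:Lambda} produces a product of two $\Delta$'s.

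I would then take the difference with the reversed ordering, whose prefactors are the same rational functions with $z_2/z_1$ replaced by $z_1/z_2$. For summands carrying a single $\Delta$ factor I would use $\Delta(z)-\Delta(z^{-1})=c(x,r)\big(\delta(x^{-1}z)-\delta(xz)\big)$; these are exactly the terms in which $i$ fuses onto the boundary of $\overrightarrow{\Lambda}_{\Omega_j}$, and at the delta-supports $z_1=x^{\mp(j+1)}z_2$ the specialized normal-ordered product becomes a configuration $\overrightarrow{\Lambda}_{\Omega_{j+1}}(x^{\mp1}z_2)$, producing the $T_{j+1}$ terms. For the conjugate-pair summands carrying two $\Delta$'s I would use the two-factor companion $\Delta(z)\Delta(x^sz)-\Delta(z^{-1})\Delta(x^{-s}z^{-1})=c(x,r)\{\cdots\}$ stated after the definition of $\Delta$; the fusion relation \eqref{eqn:fusion-Lambda1}, together with the telescoping \eqref{eqn:fusion-Lambda3} down to \eqref{eqn:fusion-Lambda2}, then collapses the colliding pair $\Lambda_k\Lambda_{\overline{k}}$ and reduces the remaining product to $\overrightarrow{\Lambda}_{\Omega_{j-1}}$, giving the $T_{j-1}$ terms with the prefactor $\Delta\big(x^{2N+2-2j}\big)$ arising from evaluating the product identity at the relevant shift.

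The main obstacle is the combinatorial resummation in the final step: one must check that, after summing over all pairs $(i,\Omega_j)$ feeding a given delta-support, the surviving coefficients $d_{\Omega_j}(x,r)$ together with the $\Delta$-factors produced by fusion assemble exactly into $d_{\Omega_{j\pm1}}(x,r)$, so that the localized sums rebuild the full currents $T_{j+1}$ and $T_{j-1}$ rather than partial sums over subsets. Tracking this requires care at the boundary indices $i=0$ and $i=N$, where $\Lambda_0$ and the half-integer factors $\big[r-\frac{1}{2}\big]_x/\big[\frac{1}{2}\big]_x$ enter, and at the transition between the insertion contributions and the conjugate-pair contributions; the bookkeeping is controlled by the $d$-ratio identity \eqref{eqn:d} and the fusion relations \eqref{eqn:fusion-f1}--\eqref{eqn:fusion-f6} among the $f_{i,j}$, which guarantee that the coefficients match. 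Verifying that the spurious poles cancel and that no extraneous delta-supports survive is the technical heart of the argument.
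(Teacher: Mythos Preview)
Your outline is essentially the paper's own approach: expand over pairs $(s,\Omega_j)$, normal-order via \eqref{eqn:Lambda}, take the difference, and collect contributions at each delta-support. The paper organizes this by first deriving, for each configuration of $s$ relative to $\Omega_j$ (the cases $s,\bar s\notin\Omega_j$; $s\in\Omega_j$; $s,\bar s\in\Omega_j$; $s\notin\Omega_j,\bar s\in\Omega_j$; $s=0$), a closed-form ``exchange relation'' already written as a linear combination of delta functions, and then summing over all $(s,\Omega_j)$ to obtain coefficients $\overline{G}_{j+1-2m}$, $G_{j+1-2m}$, $\overline{H}_{2N-j+2-2m}$, $H_{2N-j+2-2m}$ at each support $x^{\pm(j+1-2m)}$, $x^{\pm(2N-j+2-2m)}$.

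One correction to your last paragraph: the identities that control the resummation here are \emph{not} the $d$-ratio identity \eqref{eqn:d} or the $f$-fusion relations \eqref{eqn:fusion-f1}--\eqref{eqn:fusion-f6}; those are used elsewhere (for the duality proposition and for the inductive step $i\to i+1$ in Theorem~\ref{thm:3-2}, respectively). In the base case the $T_{j+1}$ terms are assembled by the elementary recursion $d_{\Omega_j\cup\{s\}}=d_{\Omega_j}\cdot(\text{possible extra }\Delta)$ together with ${:}\Lambda_s\overrightarrow{\Lambda}_{\Omega_j}{:}=\overrightarrow{\Lambda}_{\Omega_j\cup\{s\}}$, while the $T_{j-1}$ terms and the vanishing of the intermediate supports use only the $\Lambda$-fusion identities \eqref{eqn:fusion-Lambda1}--\eqref{eqn:fusion-Lambda3} and a direct reindexing: one rewrites the sum over $(s,\Omega_j)$ at a fixed intermediate support as a sum over $\Omega_{j+1}=\Omega_j\cup\{s\}$ and shows that the two ways of extracting $s$ (as $t_m$ or as $t_{m+1}$) give identical contributions that cancel. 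This pairwise cancellation---rather than any global identity on $d_{\Omega}$---is what kills the spurious deltas, and it is the part you would need to make explicit.
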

\begin{proof}
{In this proof, we frequently use exchange relations
(\ref{exchange:L1})--(\ref{exchange:L7}) in Appendix \ref{appendix:exchange}.}
We start from
\begin{gather*}
{{\rm LHS}_{1,j}=}f_{1,j}(z_2/z_1)T_1(z_1)T_j(z_2)-f_{j,1}(z_1/z_2)T_j(z_2)T_1(z_1),\qquad
1\leq j \leq N.
%\label{eqn:L1j}
\end{gather*}
From the definition of $T_j(z)$ introduced in (\ref{def:Ti(z)}),
${\rm LHS}_{1,j}$ can be written {as} the sum of
\begin{gather*}
f_{1,j}(z_2/z_1)\Lambda_s(z_1)
\overrightarrow{\Lambda}_{\Omega_j}(z_2)-f_{j,1}(z_1/z_2)
\overrightarrow{\Lambda}_{\Omega_j}(z_2)\Lambda_s(z_1)\quad
{\rm over}\quad
s \in J_N,\  \Omega_j \subset J_N,\  |\Omega_j|=j,
%\label{eqn:Lambda1-Lambdaj}
\end{gather*}
summarized in Appendix \ref{appendix:exchange}.
Adding exchange relations (\ref{exchange:L1})--(\ref{exchange:L7})
over $s \in J_N$, $\Omega_j \subset J_N$, $|\Omega_j|=j$ yields
\begin{align}
{\rm LHS}_{1,j}={}c(x,r)
\Biggl\{&\sum_{m=0}^{[\frac{j}{2}]}
\biggl(\delta\biggl(x^{-j-1+2m}\frac{z_2}{z_1}\biggr){\overline{G}_{j+1-2m}(z_2)}
-\delta\biggl(x^{j+1-2m}\frac{z_2}{z_1}\biggr){{G}_{j+1-2m}(z_2)}\biggr)\notag
\\
&+\sum_{m=0}^{N-[\frac{j-1}{2}]}
\biggl(\delta\biggl(x^{-2N+j-2+2m}\frac{z_2}{z_1}\biggr)
{\overline{H}_{2N-j+2-2m}(z_2)}\notag
\\
&-\delta\biggl(x^{2N-j+2-2m}\frac{z_2}{z_1}\biggr)
{H_{2N-j+2-2m}(z_2)}
\biggr)\Biggr\}.
\label{eqn:L1j-2}
\end{align}
{\sloppy
Formulas for $\overline{G}_{j+1}(z)$, $G_{j+1}(z)$, $\overline{H}_{2N-j+2}(z)$, $H_{2N-j+2}(z)$,
$\overline{G}_{j+1-2m}(z)$, $G_{j+1-2m}(z)$, $\overline{H}_{2N-j+2-2m}(z)$, and $H_{2N-j+2-2m}(z)$
will be given below. In (\ref{def:H2}) we define $H_0(z)=0$ to avoid ambiguity of
$\overline{H}_0(z)$ and $H_0(z)$. In the case when $j$ {is} even, we have
${\rm LHS}_{1,j}=c(x,r)\overline{H}_0(z_2)-H_0(z_2)\delta(z_2/z_1)+
\overline{H}_2(z_2)\delta\big(x^{-2}z_2/z_1\big)-H_2(z_2)\delta\big(x^2z_2/z_1\big)+\cdots$.

}

First, we define $\overline{G}_{j+1}(z)$, $1\leq j \leq N$,
as the coefficient of
$\delta(x^{-j-1}z_2/z_1)$ in (\ref{eqn:L1j-2}).
In~what follows, for a subset $\Omega_j \subset J_N$ with $|\Omega_j|=j$,
we write its elements as $s_1, s_2, \dots, s_j$, $s_1\prec s_2 \prec \cdots \prec s_j$.
Adding the first term in (\ref{exchange:L1}) and the first term in (\ref{exchange:L6}) yields
\begin{align*}
\begin{split}
&\overline{G}_{j+1}(z)
=\sum_{\Omega_j \subset J_N}
\sum\limits_{\substack{s \in J_N\\s \prec s_1, \,\overline{s} \notin \Omega_j}}
d_{\Omega_{j}}(x,r){:}\Lambda_{s}\big(x^{-j-1}z\big)\overrightarrow{\Lambda}_{{\Omega_{j}}}(z){:}\notag
\\
&\hphantom{\overline{G}_{j+1}(z)=}
+\sum_{\Omega_j \subset J_N}\sum\limits_{\substack{n=1\\n \prec s_1}}^N
\sum\limits_{\substack{l=1\\s_l=\overline{n}}}^j\Delta\big(x^{2(N+1-l-n)}\big) d_{{\Omega_{j}}}(x,r)
{:}\Lambda_{n}\big(x^{-j-1}z\big)\overrightarrow{\Lambda}_{{\Omega_{j}}}(z){:}.
%\label{def:bG1}
\end{split}
\end{align*}
Using
${:}\Lambda_{n}\big(x^{-j-1}z\big)\overrightarrow{\Lambda}_{{\Omega_{j}}}(z){:}
=\overrightarrow{\Lambda}_{{\Omega_{j}\cup \{n\}}}\big(x^{-1}z\big)$
and
\begin{gather*}
d_{{\Omega_{j}\cup \{n\}}}(x,r)=
d_{{\Omega_{j}}}(x,r)\times
\begin{cases}
1,& \overline{n} \notin \Omega_j,
\\
\Delta(x^{{2(N+1-l-n)}}),&\overline{n}=s_l
\end{cases}\quad
{\rm with}\ n \prec s_1,\ 1\leq n \leq N,
\end{gather*}
yields
\begin{gather*}
\overline{G}_{j+1}(z)=
\sum_{\Omega_j \subset J_N}\sum\limits_{\substack{s \in J_N
\\s\prec s_1}}d_{\Omega_j\cup \{s\}}(x,r)
\overrightarrow{\Lambda}_{\Omega_j \cup \{s\}}\big(x^{-1}z\big) .
\end{gather*}
Hence, we obtain ${\overline{G}_{j+1}(z)}=T_{j+1}\big(x^{-1}z\big)$, $1\leq j \leq N$.

Next, we define
${G}_{j+1}(z)$, $1\leq j \leq N$, as the coefficient of
$\delta\big(x^{j+1}z_2/z_1\big)$ in (\ref{eqn:L1j-2}).
Adding the second term in (\ref{exchange:L1}) and
the third term in (\ref{exchange:L7}) yields
\begin{align*}
{G}_{j+1}(z)=&\sum_{\Omega_j \subset J_N}
\sum\limits_{\substack{s \in J_N\\s_j \prec s, \overline{s} \notin \Omega_j}}
d_{\Omega_{j}}(x,r)
{:}\overrightarrow{\Lambda}_{\Omega_{j}}(z)\Lambda_s\big(x^{j+1}z\big){:}
\\
&+
\sum_{\Omega_j \subset J_N}
\sum\limits_{\substack{n=1\\s_j \prec \overline{n}}}^N
\sum\limits_{\substack{k=1\\s_k=n}}^j
\Delta\big(x^{2(N+k-j-n)}\big) d_{\Omega_{j}}(x,r)
{:}\overrightarrow{\Lambda}_{\Omega_{j}}(z)\Lambda_{\overline{n}}\big(x^{j+1}z\big){:}.
%\label{def:G1}
\end{align*}
Using
${:}\overrightarrow{\Lambda}_{\Omega_{j}}(z)\Lambda_{\overline{n}}\big(x^{j+1}z\big){:}
=\overrightarrow{\Lambda}_{\Omega_{j}\cup \{\overline{n}\}}(xz)$
and
\begin{gather*}
d_{{\Omega_{j}\cup \{\overline{n}\}}}(x,r)=
d_{{\Omega_{j}}}(x,r)\times
\begin{cases}
1,& n \notin \Omega_j,\\
\Delta\big(x^{{2(N+k-j-n)}}\big),&n=s_k
\end{cases}\quad
{\rm with}~s_j \prec \overline{n},\ 1\leq n \leq N,
\end{gather*}
yields
\begin{gather*}
{G}_{j+1}(z)=
\sum_{\Omega_j \subset J_N}
\sum\limits_{\substack{s \in J_N
\\s_j \prec s}}d_{\Omega_j\cup \{s\}}(x,r)
\overrightarrow{\Lambda}_{\Omega_j \cup \{s\}}(xz).
\end{gather*}
Hence, we obtain ${G_{j+1}(z)}=T_{j+1}(z)$, $1\leq j \leq N$.

We define $\overline{H}_{2N-j+2}(z), 1\leq j \leq N$, as the coefficient of
$\delta\big(x^{-2N+j-2}z_2/z_1\big)$ in (\ref{eqn:L1j-2}).
Adding the first term in (\ref{exchange:L3}),
the second term in (\ref{exchange:L3}), the second term in (\ref{exchange:L6}),
and the fourth term in (\ref{exchange:L6}) yields
\begin{align}
\overline{H}_{2N-j+2}(z)={}&\sum_{n=1}^{j-1}
\sum_{k=1}^{j-n}\sum\limits_{\substack{\Omega_j \subset J_N\\s_k=n
\\s_l=\overline{n},\, l=j-n+1}}
d_{\Omega_j}(x,r) {:}\Lambda_n\big(x^{-2N+j-2}z\big)\overrightarrow{\Lambda}_{\Omega_j}(z){:}\notag
\\
&-\sum_{n=1}^{j-2}\sum_{k=1}^{j-n-1}
\sum\limits_{\substack{\Omega_j \subset J_N\\s_k=n\\s_l=\overline{n},\, l=j-n}}
d_{\Omega_j}(x,r) {:}\Lambda_n\big(x^{-2N+j-2}z\big)
\overrightarrow{\Lambda}_{\Omega_j}(z){:}\notag
\\
&+\sum_{n=1}^{j}\sum_{k=1}^{j-n+1}\sum\limits_{\substack{\Omega_j \subset J_N
\\s_{k-1}\prec n \prec s_k\\s_l=\overline{n},\, l=j+1-n}}
\Delta\big(x^{2(N-j+k)}\big) d_{\Omega_j}(x,r) {:}\Lambda_n\big(x^{-2N+j-2}z\big)\overrightarrow{\Lambda}_{\Omega_j}(z){:}\notag
\\
&-\sum_{n=1}^{j-1}\sum_{k=1}^{j-n}\sum\limits_{\substack{\Omega_j \subset J_N
\\s_{k-1} \prec n \prec s_k\\s_l=\overline{n},\, l=j-n}}\!\!\!\!
\Delta\big(x^{2(N-j+k)}\big) d_{\Omega_j}(x,r){:}\Lambda_n\big(x^{-2N+j-2}z\big)\overrightarrow{\Lambda}_{\Omega_j}(z){:}.
\label{def:bH1}
\end{align}
The second term in (\ref{def:bH1}) vanishes, because there doesn't exist $s_j \in J_N$,
if $\Omega_j \subset J_N$, $s_k=n$, $1\leq k \leq j+1-n$, $s_l=\overline{n}$, $l=j-n$,
and $1\leq n \leq j-2$ are satisfied. The fourth term in (\ref{def:bH1}) vanishes,
because there doesn't exist $s_j \in J_N$,
if $\Omega_j \subset J_N$, $s_{k-1}\prec n \prec s_k$, $1\leq k \leq j-n$, $s_l=\overline{n}$, $l=j-n$, and $1\leq n \leq j-1$ are satisfied. Rewriting the sum of the first and the third terms yields\looseness=-1
\begin{align*}
\overline{H}_{2N-j+2}(z)={}&\sum_{n=1}^{j-1}
\sum_{k=1}^{\operatorname{Min}(j-n, n)}
\sum_{\substack{\Omega_j \subset J_N,\,s_k=n\\(s_{j-n+1}, \dots, s_{j-1}, s_j)\\=(\overline{n}, \dots,
\overline{2}, \overline{1})}}
d_{\Omega_j}(x,r) {:}\Lambda_n\big(x^{-2N+j-2}z\big)\overrightarrow{\Lambda}_{\Omega_j}(z){:}
\\[-.5ex]
&+\sum_{n=1}^{j}\!\!\sum_{k=1}^{\operatorname{Min}(j+1-n, n)}\!\!\!\!\!\!\!\!\!\!
\sum_{\substack{\Omega_j \subset J_N,\,s_{k-1}\prec n \prec s_k
\\(s_{j-n+1}, \dots, s_{j-1}, s_j)\\=(\overline{n},\dots, \overline{2}, \overline{1})}}
\!\!\!\!\!\!\!\!\!\!\Delta\big(x^{2(N-j+k)}\big) d_{\Omega_j}(x,r) {:}\Lambda_n\big(x^{-2N+j-2}z\big)\overrightarrow{\Lambda}_{\Omega_j}(z){:}.
\end{align*}
The relation $d_{\Omega_j}(x,r)=\Delta\big(x^{2(N+1-j)}\big) d_{\Omega_j \setminus \{\overline{n}\}}(x,r)$ holds, if $s_k=n$, $s_l=\overline{n}$, $1\leq n \leq j-1$, and $1\!\leq\! k\!<\!l\!=\!j+1-n$ are satisfied.
The relation $d_{\Omega_j}(x,r)\Delta\big(x^{2(N-j+k)}\big)
=\Delta\big(x^{2(N+1-j)}\big) d_{\Omega_j \setminus \{\overline{n}\}}(x,r)$ holds,
if $s_{k-1} \prec n \prec s_k$, $s_l=\overline{n}$, $1\leq n \leq j$, and
$1\leq k\leq l=j+1-n$ are satisfied. Using the above two relations and
\begin{gather*}
{:}\Lambda_n\big(x^{-2N+j-2}z\big)\overrightarrow{\Lambda}_{\Omega_j}(z){:}=
\overrightarrow{\Lambda}_{\Omega_j \setminus \{\overline{n}\}}\big(x^{-1}z\big),
\\
(s_{j+1-n}, \dots, s_{j-1}, s_j)=(\overline{n},\dots, \overline{2}, \overline{1}),\qquad
1\leq n \leq j,
\end{gather*}
obtained from (\ref{eqn:fusion-Lambda2}) and (\ref{eqn:fusion-Lambda3}),
yields
\begin{align*}
\overline{H}_{2N-j+2}(z)={}&
\Delta\big(x^{2(N-j+1)}\big)\Bigg(
\sum_{n=1}^{j-1}
\sum_{k=1}^{\operatorname{Max}(j-n, n)}\!\!\!
\sum_{\substack{\Omega_j \subset J_N
\\s_k=n,\,s_l=\overline{n},\,l=j+1-n}}\!
d_{\Omega_j \setminus \{s_l\}}(x,r)
\overrightarrow{\Lambda}_{\Omega_j \setminus \{s_l\}}\big(x^{-1}z\big)
\\[-1ex]
&+\sum_{n=1}^j \sum_{k=1}^{\operatorname{Max}(j+1-n,n)}
\sum_{\substack{\Omega_j \subset J_N
\\s_{k-1}\prec n \prec s_k\\s_l=\overline{n},\, l=j+1-n}}d_{\Omega_j \setminus \{s_l\}}(x,r)
\overrightarrow{\Lambda}_{\Omega_j \setminus \{s_l\}}\big(x^{-1}z\big)\Bigg).
\end{align*}
Hence, we obtain
${\overline{H}_{2N-j+2}(z)}=\Delta\big(x^{2(N-j+1)}\big)T_{j-1}\big(x^{-1}z\big)$,
$1\leq j \leq N$.

We define $H_{2N-j+2}(z)$, $1\leq j \leq N$, as the coefficient of
$\delta(x^{2N-j+2}z_2/z_1)$ in (\ref{eqn:L1j-2}). Adding the first term in (\ref{exchange:L4}),
the second term in (\ref{exchange:L4}), the second term in (\ref{exchange:L7}),
and the fourth term in (\ref{exchange:L7}) yields
\begin{align}
H_{2N-j+2}(z)={}&-\sum_{n=1}^{j-2}\sum_{l=n+2}^{j}
\sum_{\substack{\Omega_j \subset J_N\\s_k=n,\, k=n+1\\s_l=\overline{n}}}
d_{\Omega_j}(x,r){:}\overrightarrow{\Lambda}_{\Omega_j}(z)
\Lambda_{\overline{n}}\big(x^{2N-j+2}\big){:}\notag
\\[-1ex]
&+\sum_{n=1}^{j-1}\sum_{l=n+1}^{j}\sum_{\substack{\Omega_j \subset J_N\\
s_k=n,\, k=n\\s_l=\overline{n}}}d_{\Omega_j}(x,r)
{:}\overrightarrow{\Lambda}_{\Omega_j}(z)
\Lambda_{\overline{n}}\big(x^{2N-j+2}z\big){:}\notag
\\[-1ex]
&+\sum_{n=1}^{j}\sum_{l=n}^{j}\sum_{\substack{\Omega_j \subset J_N
\\s_k=n,\, k=n\\ s_l \prec \overline{n} \prec s_{l+1}}}
\Delta\big(x^{2(N+1-l)}\big) d_{\Omega_j}(x,r)
{:}\overrightarrow{\Lambda}_{\Omega_j}(z)
\Lambda_{\overline{n}}\big(x^{2N-j+2}z\big){:}\notag
\\[-1ex]
&-\sum_{n=1}^{j-1}\sum_{l=n+1}^{j}\!\!\!\sum_{\substack{\Omega_j \subset J_N
\\s_k=n,\, k=n+1\\s_l\prec \overline{n} \prec s_{l+1}}}
\!\!\!\Delta\big(x^{2(N+1-l)}\big) d_{\Omega_j}(x,r)
{:}\overrightarrow{\Lambda}_{\Omega_j}(z)
\Lambda_{\overline{n}}\big(x^{2N-j+2}z\big){:}.
\label{def:H1}
\end{align}
The first term in (\ref{def:H1}) vanishes,
because there doesn't exist $s_1 \in J_N$,
if $\Omega_j \subset J_N$, $s_k=n$, $k=n+1$, $s_l=\overline{n}$, $n+2\leq l \leq j$,
and $1\leq n \leq j-2$ are satisfied. The fourth term in (\ref{def:H1}) vanishes,
because there doesn't exist $s_1 \in J_N$,
if $\Omega_j \subset J_N$, $s_k=n$, $k=n+1$,
$s_l \prec \overline{n} \prec s_{l+1}$, $n+1\leq l \leq j$, and $1\leq n \leq j-1$ are satisfied.
Rewriting the sum of the second and the third terms yields
\begin{align*}
{H}_{2N-j+2}(z)={}&\sum_{n=1}^{j-1}\sum_{l=\operatorname{Max}(n+1,j+1-n)}^{j}
\sum_{\substack{\Omega_j \subset J_N\\(s_1, s_2, \dots, s_n)\\=(1, 2, \dots, n)
\\s_l=\overline{n}}}
d_{\Omega_j}(x,r) {:}\overrightarrow{\Lambda}_{\Omega_j}(z)
\Lambda_{\overline{n}}\big(x^{2N-j+2}z\big){:}
\\
&+\sum_{n=1}^{j}\sum_{l=\operatorname{Max}(n,j+1-n)}^{j}
\sum_{\substack{\Omega_j \subset J_N\\(s_1, s_2, \dots, s_n)\\=
(1,2, \dots, n)\\s_l \prec \overline{n} \prec s_{l+1}}}\!\!\!\!
\Delta\big(x^{2(N+1-l}\big) d_{\Omega_j}(x,r){:}
\overrightarrow{\Lambda}_{\Omega_j}(z)
\Lambda_{\overline{n}}\big(x^{2N-j+2}z\big){:}.
\end{align*}
The relation $d_{\Omega_j}(x,r)=\Delta\big(x^{2(N+1-j)}\big) d_{\Omega_j \setminus \{n\}}(x,r)$ holds,
if $s_k=n$, $s_l=\overline{n}$, $1\leq n \leq j-1$, and~$k=n$, and $n+1\leq l \leq j$ are satisfied.
The relation $d_{\Omega_j}(x,r)\Delta\big(x^{2(N+1-l)}\big)
=\Delta\big(x^{2(N+1-j)}\big)\times d_{\Omega_j \setminus \{n\}}(x,r)$ holds, if
$s_k=n$, $s_{l} \prec \overline{n} \prec s_{l+1}$, $1\leq n \leq j$, $k=n$, and
$n+1\leq l\leq j$ are satisfied. Using the above two relations and
\begin{align*}
{:}\overrightarrow{\Lambda}_{\Omega_j}(z)
\Lambda_{\overline{n}}\big(x^{2N-j+2}z\big){:}=
\overrightarrow{\Lambda}_{\Omega_j \setminus \{n\}}(xz),\qquad
(s_1, s_2, \dots, s_n)=(1, 2,\dots, n),\qquad
1\leq n \leq j,
\end{align*}
obtained from (\ref{eqn:fusion-Lambda2}) and (\ref{eqn:fusion-Lambda3}), yields
\begin{align*}
{H}_{2N-j+2}(z)={}&\Delta\big(x^{2(N-j+1)}\big)\Bigg(
\sum_{n=1}^{j-1}\sum_{l=\operatorname{Max}(n+1, j+1-n)}^{j}
\sum_{\substack{\Omega_j \subset J_N\\s_k=n, \, k=n\\s_l=\overline{n}}}
d_{\Omega_j \setminus \{s_k\}}(x,r)
\overrightarrow{\Lambda}_{\Omega_j \setminus \{s_k\}}(xz)
\\
&\hphantom{\Delta\big(x^{2(N-j+1)}\big)\Bigg(}+\sum_{n=1}^j\sum_{l=\operatorname{Max}(n, j+1-n)}^{j}
\sum_{\substack{\Omega_j \subset J_N\\s_k=n,\, k=n
\\s_l \prec \overline{n} \prec s_{l+1}}}d_{\Omega_j \setminus \{s_k\}}(x,r)
\overrightarrow{\Lambda}_{\Omega_j \setminus \{s_k\}}(xz)\Bigg).
\end{align*}
Hence, we obtain
${H_{2N-j+2}(z)}=\Delta\big(x^{2(N-j+1)}\big)T_{j-1}(xz)$, $1\leq j \leq N$.

We define $\overline{G}_{j+1-2m}(z)$, $1\leq m \leq [\frac{j}{2}]$, $1\leq j \leq N$,
as the coefficient of $\delta(x^{-j-1+2m}z_2/z_1)$ in~(\ref{eqn:L1j-2}).
Adding the first term in~(\ref{exchange:L1}), the second term in~(\ref{exchange:L1}),
the first term in~(\ref{exchange:L6}), the third term in~(\ref{exchange:L6}),
the first term in~(\ref{exchange:L7}),
and the third term in~(\ref{exchange:L7}) yields
\begin{align*}
\overline{G}_{j+1-2m}(z)={}&\sum_{\Omega_j \subset J_N}
\sum_{\substack{s\in J_N\\s_m \prec s \prec s_{m+1}
\\ \overline{s} \notin \Omega_j}}d_{\Omega_j}(x,r)
{:}\Lambda_s\big(x^{-j-1+2m}z\big)\overrightarrow{\Lambda}_{\Omega_j}(z){:}
\\
&-\sum_{\Omega_j \subset J_N}\sum_{\substack{s\in J_N
\\s_{m-1} \prec s \prec s_{m}\\ \overline{s} \notin \Omega_j}}d_{\Omega_j}(x,r)
{:}\Lambda_s\big(x^{-j-1+2m}z\big)\overrightarrow{\Lambda}_{\Omega_j}(z){:}
\\
&+\sum_{\Omega_j \subset J_N}\sum_{n=1}^N\sum_{\substack{l=m+1
\\s_m \prec s \prec s_{m+1}\\s_l=\overline{s},\, s=n}}^j\!\!\!
\Delta\big(x^{2(l-m+n-N-1)}\big) d_{\Omega_j}(x,r)
{:}\Lambda_s\big(x^{-j-1+2m}z\big)\overrightarrow{\Lambda}_{\Omega_j}(z){:}
\\
&-\sum_{\Omega_j \subset J_N}\sum_{n=1}^N\sum_{\substack{l=m
\\s_{m-1} \prec s \prec s_{m}\\s_l=\overline{s},\, s=n}}^j\!\!\!
\Delta\big(x^{2(l-m+n-N-1)}\big) d_{\Omega_j}(x,r)
{:}\Lambda_s\big(x^{-j-1+2m}z\big)\overrightarrow{\Lambda}_{\Omega_j}(z){:}
\\
&+\sum_{\Omega_j \subset J_N}\sum_{n=1}^N\sum_{\substack{k=1
\\s_k=\overline{s},\, s=\overline{n}\\s_m \prec s \prec s_{m+1}}}^m\!\!\!
\Delta\big(x^{2(m-k+n-N-1)}\big) d_{\Omega_j}(x,r)
{:}\Lambda_s\big(x^{-j-1+2m}z\big)\overrightarrow{\Lambda}_{\Omega_j}(z){:}
\\
&-\sum_{\Omega_j \subset J_N}\sum_{n=1}^N\!\sum_{\substack{k=1
\\s_k=\overline{s},\, s=\overline{n}
\\s_{m-1} \prec s \prec s_{m}}}^{m-1}\!\!\!
\Delta\big(x^{2(m-k+n-N-1)}\big) d_{\Omega_j}(x,r)
{:}\Lambda_s\big(x^{-j-1+2m}z\big)\overrightarrow{\Lambda}_{\Omega_j}(z){:}.
%\label{def:bG2}
\end{align*}
For a subset $\Omega_j=\{s_1, s_2, \dots, s_j\}\subset J_N$ and an element $s \notin \Omega_j$,
$s \in J_N$, we write elements of $\Omega_j \cup \{s\}$ as
$t_1, t_2, \dots, t_{j+1}, t_1 \prec t_2 \prec \cdots \prec t_{j+1}$.
In what follows, we use the abbreviation
$\Omega_{j+1}=\{t_1, t_2, \dots, t_{j+1}\}$.
Rewriting the sum yields
\begin{align*}
\overline{G}_{j+1-2m}(z)={}&
\sum\limits_{\substack{{\Omega}_{j+1} \subset J_N
\\ \overline{t}_{m+1} \notin {\Omega}_{j+1}
\setminus \{t_{m+1}\}}}
d_{\{t_1, \dots, t_m\}}(x,r)\,
d_{\{t_{m+2}, \dots, t_{j+1}\}}(x,r)
\\ \qquad
&\phantom{-}\times \prod_{p=1}^m\prod\limits_{\substack{q=m+2 \\t_q=\overline{t}_p}}^{j+1}
\Delta\big(x^{2(q-p+t_p-N-2)}\big){:}\Lambda_{t_{m+1}}\big(x^{-j-1+2m}z\big)
\overrightarrow{\Lambda}_{{\Omega}_{j+1}
\setminus \{t_{m+1}\}}(z){:}
\\
&-\sum\limits_{\substack{\Omega_{j+1}\subset J_N
\\ \overline{t}_m \notin \Omega_{j+1} \setminus \{t_m\}}}
d_{\{t_1,\dots, t_{m-1}\}}(x,r) d_{\{t_{m+1}, \dots, t_{j+1}\}}(x,r)
\\
&\phantom{-}\times \prod_{p=1}^{m-1}\prod\limits_{\substack{q=m+1\\t_q=\overline{t}_p}}^{j+1}
\Delta\big(x^{2(q-p+t_p-N-2)}\big){:}\Lambda_{t_{m}}\big(x^{-j-1+2m}z\big)
\overrightarrow{\Lambda}_{\Omega_{j+1}\setminus \{t_{m}\}}(z){:}
\\
&+\sum_{\Omega_{j+1} \subset J_N}
\sum\limits_{\substack{n=1\\ t_{m+1}=n}}^N
\sum\limits_{\substack{l=m+2\\ t_l=\overline{t}_{m+1}}}^{j+1}
d_{\{t_1,\dots, t_m\}}(x,r) d_{\{t_{m+2}, \dots, t_{j+1}\}}(x,r)
\\
&\phantom{-}\times \prod_{p=1}^m\prod\limits_{\substack{q=m+2\\t_q=\overline{t}_p}}^{j+1}
\Delta\big(x^{2(q-p+t_p-N-2)}\big)\Delta\big(x^{2(l-m+t_{m+1}-N-2)}\big)
\\
&\phantom{-}\times {:}\Lambda_{t_{m+1}}\big(x^{-j-1+2m}z\big)
\overrightarrow{\Lambda}_{\Omega_{j+1}\setminus \{t_{m+1}\}}(z){:}
\\
&-\sum_{\Omega_{j+1}\subset J_N}
\sum\limits_{\substack{n=1\\t_m=n}}^N
\sum\limits_{\substack{l=m+1\\t_l=\overline{t}_m}}^{j+1}
d_{\{t_1,\dots, t_{m-1}\}}(x,r) d_{\{t_{m+1},\dots, t_{j+1}\}}(x,r)
\\
&\phantom{-}\times \prod_{p=1}^{m-1}\prod\limits_{\substack{q=m+1\\t_q=\overline{t}_p}}^{j+1}
\Delta\big(x^{2(q-p+t_p-N-2)}\big)\Delta\big(x^{2(l-m+t_m-N-2)}\big)
\\
&\phantom{-}\times {:}\Lambda_{t_{m}}\big(x^{-j-1+2m}z\big)
\overrightarrow{\Lambda}_{\Omega_{j+1}\setminus \{t_m\}}(z){:}
\\
&+\sum_{\Omega_{j+1} \subset J_N}
\sum\limits_{\substack{n=1\\t_{m+1}=\overline{n}}}^N
\sum\limits_{\substack{k=1\\t_{m+1}=\overline{t}_k}}^m
d_{\{t_1,\dots, t_m\}}(x,r) d_{\{t_{m+2}, \dots, t_{j+1}\}}(x,r)
\\
&\phantom{-}\times
\prod_{p=1}^m\prod\limits_{\substack{q=m+2\\t_q=\overline{t}_p}}^{j+1}
\Delta\big(x^{2(q-p+t_p-N-2)}\big)\Delta\big(x^{2(m-k+t_k-N-1)}\big)
\\
&\phantom{-}\times{:}\Lambda_{t_{m+1}}\big(x^{-j-1+2m}z\big)
\overrightarrow{\Lambda}_{\Omega_{j+1}\setminus \{t_{m+1}\}}(z){:}
\\
&-\sum_{\Omega_{j+1}\subset J_N}
\sum\limits_{\substack{n=1\\t_m=\overline{n}}}^N
\sum\limits_{\substack{k=1\\t_m=\overline{t}_k}}^{m-1}
d_{\{t_1, \dots, t_{m-1}\}}(x,r) d_{\{t_{m+1}, \dots, t_{j+1}\}}(x,r)
\\
&\phantom{-}\times\prod_{p=1}^{m-1}\prod\limits_{\substack{q=m+1\\t_q=\overline{t}_p}}^{j+1}
\Delta\big(x^{2(q-p+t_p-N-2)}\big)
\Delta\big(x^{2(m-k+t_k-N-1)}\big)
\\
&\phantom{-}\times{:}\Lambda_{t_{m}}\big(x^{-j-1+2m}z\big)
\overrightarrow{\Lambda}_{\Omega_{j+1}\setminus \{t_m\}}(z){:}.
\end{align*}
Rewriting the sum yields
\begin{align*}
\overline{G}_{j+1-2m}(z)={}&
\sum_{\Omega_{j+1}\subset J_N}
d_{\{t_1,\dots,t_m\}}(x,r) d_{\{t_{m+2},\dots,t_{j+1}\}}(x,r)
\prod_{p=1}^m \prod\limits_{\substack{q=m+2
\\t_q=\overline{t}_p}}^{j+1}\Delta\big(x^{2(q-p+t_p-N-2)}\big)
\\
&\phantom{-}\times\prod\limits_{\substack{p=1\\t_{m+1}=\overline{t}_p}}^m
\Delta\big(x^{2(m-p+t_p-N-1)}\big)
\prod\limits_{\substack{q=m+2\\t_q=\overline{t}_{m+1}}}^{j+1}
\Delta\big(x^{2(q-m+t_{m+1}-N-2)}\big)
\\
&\phantom{-}\times{:}\Lambda_{t_{m+1}}\big(x^{-j-1+2m}z\big)
\overrightarrow{\Lambda}_{\Omega_{j+1}\setminus \{t_{m+1}\}}(z){:}
\\
&-\sum_{\Omega_{j+1}\subset J_N}\!\!\!
d_{\{t_1,\dots,t_{m-1}\}}(x,r) d_{\{t_{m+1},\dots,t_{j+1}\}}(x,r)
\prod_{p=1}^{m-1}\! \prod\limits_{\substack{q=m+1
\\t_q=\overline{t}_p}}^{j+1}\!\!\!\Delta\big(x^{2(q-p+t_p-N-2)}\big)
\\
&\phantom{-}\times\prod\limits_{\substack{p=1\\t_{m}=\overline{t}_p}}^m
\Delta\big(x^{2(m-p+t_p-N-1)}\big)\prod\limits_{\substack{q=m+1\\t_q=\overline{t}_{m}}}^{j+1}
\Delta\big(x^{2(q-m+t_{m}-N-2)}\big)
\\
&\phantom{-}\times{:}\Lambda_{t_{m}}\big(x^{-j-1+2m}z\big)
\overrightarrow{\Lambda}_{\Omega_{j+1}\setminus \{t_{m}\}}(z){:}.
\end{align*}
Using
\begin{gather*}
d_{\{t_1,\dots,t_m\}}(x,r) d_{\{t_{m+2},\dots,t_{j+1}\}}(x,r)
\prod_{p=1}^m \prod\limits_{\substack{q=m+2\\t_q=\overline{t}_p}}^{j+1}
\Delta\big(x^{2(q-p+t_p-N-2)}\big)
\prod\limits_{\substack{p=1\\t_{m+1}=\overline{t}_p}}^m
\Delta\big(x^{2(m-p+t_p-N-1)}\big)
\\ \qquad
{}\times\prod\limits_{\substack{q=m+2\\t_q=\overline{t}_{m+1}}}^{j+1}
\Delta\big(x^{2(q-m+t_{m+1}-N-2)}\big)
=d_{\{t_1,\dots,t_{m-1}\}}(x,r) d_{\{t_{m+1},\dots,t_{j+1}\}}(x,r)
\\ \qquad
{}\times
\prod_{p=1}^{m-1} \prod\limits_{\substack{q=m+1
\\t_q=\overline{t}_p}}^{j+1}\!
\Delta\big(x^{2(q-p+t_p-N-2)}\big)\!
\prod\limits_{\substack{p=1
\\t_{m}=\overline{t}_p}}^m\Delta\big(x^{2(m-p+t_p-N-1)}\big)
\prod\limits_{\substack{q=m+1\\t_q=\overline{t}_{m}}}^{j+1}
\Delta\big(x^{2(q-m+t_{m}-N-2)}\big)
\end{gather*}
and
\begin{gather*}
{:}\Lambda_{t_{m+1}}\big(x^{-j-1+2m}z\big)
\overrightarrow{\Lambda}_{\Omega_{j+1}\setminus \{t_{m+1}\}}(z){:}
={:}\Lambda_{t_{m}}\big(x^{-j-1+2m}z\big)
\overrightarrow{\Lambda}_{\Omega_{j+1}\setminus \{t_{m}\}}(z){:}
\end{gather*}
yields
$\overline{G}_{j+1-2m}(z)=0$, $1\leq m \leq [\frac{j}{2}]$, $1\leq j \leq N$.

%%%%%%%%%%%%%%%%%%%%%%%%%%%%%%%%%%%%%%%%%%%%%%%%%%%%%%%
We define ${G}_{j+1-2m}(z)$, $1\leq m \leq \big[\frac{j}{2}\big]$, $1\leq j \leq N$,
as the coefficient of $\delta\big(x^{j+1-2m}z_2/z_1\big)$ in~(\ref{eqn:L1j-2}).
Adding the first term in~(\ref{exchange:L1}),
the second term in~(\ref{exchange:L1}), the first term in~(\ref{exchange:L6}),
the third term in~(\ref{exchange:L6}), the first term in~(\ref{exchange:L7}),
and the third term in~(\ref{exchange:L7}) yields
\begin{align*}
{G}_{j+1-2m}(z)={}&\sum_{\Omega_j \subset J_N}
\sum\limits_{\substack{s \in J_N\\s_{j+1-m} \prec s \prec s_{j+2-m}
\\ \overline{s} \notin \Omega_j}}
d_{\Omega_j}(x,r){:}\overrightarrow{\Lambda}_{\Omega_j}(z)
\Lambda_s\big(x^{j+1-2m}z\big) {:}
\\
&-\sum_{\Omega_j \subset J_N}\sum\limits_{\substack{s\in J_N\\s_{j-m} \prec s \prec s_{j-m+1}
\\ \overline{s} \notin \Omega_j}}d_{\Omega_j}(x,r)
{:}\overrightarrow{\Lambda}_{\Omega_j}(z)
\Lambda_s\big(x^{j+1-2m}z\big){:}
\\
&+\sum_{\Omega_j \subset J_N}\sum_{n=1}^N\sum\limits_{\substack{l=j+2-m
\\s_{j-m+1} \prec s\\ \prec s_{j-m+2}\\ s_l=\overline{s},\, s=n}}^j
\Delta\big(x^{2(l+m+n-j-N-2)}\big) d_{\Omega_j}(x,r)
{:}\overrightarrow{\Lambda}_{\Omega_j}(z)\Lambda_s\big(x^{j+1-2m}z\big){:}
\\
&-\sum_{\Omega_j \subset J_N}\sum_{n=1}^N\sum\limits_{\substack{l=j+1-m
\\s_{j-m} \prec s\\ \prec s_{j-m+1}\\ s_l=\overline{s},\, s=n}}^j
\Delta\big(x^{2(l+m+n-j-N-2)}\big) d_{\Omega_j}(x,r)
{:}\overrightarrow{\Lambda}_{\Omega_j}(z)\Lambda_s\big(x^{j+1-2m}z\big){:}
\\
&+\sum_{\Omega_j \subset J_N}\sum_{n=1}^N
\sum\limits_{\substack{k=1
\\ s_k=\overline{s},\, s=\overline{n}\\ s_{j+1-m} \prec s\\ \prec s_{j+2-m}}}^{j+1-m}
\Delta\big(x^{2(j-m-k+n-N)}\big) d_{\Omega_j}(x,r)
{:}\overrightarrow{\Lambda}_{\Omega_j}(z)\Lambda_s\big(x^{j+1-2m}z\big){:}
\\
&-\sum_{\Omega_j \subset J_N}\sum_{n=1}^N
\sum\limits_{\substack{k=1\\ s_k=\overline{s},\, s=\overline{n}
\\ s_{j-m} \prec s\\ \prec s_{j-m+1}}}^{j-m}
\Delta\big(x^{2(j-m-k+n-N)}\big) d_{\Omega_j}(x,r)
{:}\overrightarrow{\Lambda}_{\Omega_j}(z)\Lambda_s\big(x^{j+1-2m}z\big){:}.
%\label{def:G2}
\end{align*}
Rewriting the sum, in the same way as the case of $\overline{G}_{j+1-2m}(z)$, yields
\begin{align*}
{G}_{j+1-2m}(z)={}&
\sum\limits_{\substack{{\Omega}_{j+1} \subset J_N
\\ \overline{t}_{j+2-m} \notin {\Omega}_{j+1}\setminus \{t_{j+2-m}\}}}
d_{\{t_1, \dots, t_{j+1-m}\}}(x,r) d_{\{t_{j+3-m}, \dots, t_{j+1}\}}(x,r)
\\
&\times\!\!\prod_{p=1}^{j+1-m}\!\!\!\prod\limits_{\substack{q=m+j+3-m \\ t_q=\overline{t}_p}}^{j+1}
\!\!\!\!\!\!\Delta\big(x^{2(q-p+t_p-N-2)}\big)
{:}\overrightarrow{\Lambda}_{{\Omega}_{j+1}\setminus \{t_{j+2-m}\}}(z)
\Lambda_{t_{j+2-m}}\big(x^{j+1-2m}z\big){:}
\\
&-\sum\limits_{\substack{\Omega_{j+1}\subset J_N
\\ \overline{t}_{j+1-m} \notin \Omega_{j+1} \setminus\{t_{j+1-m}\}}}
d_{\{t_1,\dots, t_{j-m}\}}(x,r) d_{\{t_{j+2-m}, \dots, t_{j+1}\}}(x,r)
\\
&\times\prod_{p=1}^{j-m}\prod\limits_{\substack{q=j+2-m
\\ t_q=\overline{t}_p}}^{j+1}\Delta\big(x^{2(q-p+t_p-N-2)}\big)
{:}\overrightarrow{\Lambda}_{\Omega_{j+1}\setminus \{t_{j+1-m}\}}(z)
\Lambda_{t_{j+1-m}}\big(x^{j+1-2m}z\big){:}
\\
&+\sum_{\Omega_{j+1} \subset J_N}\sum\limits_{\substack{n=1
\\t_{j-m+2}=n}}^N\sum\limits_{\substack{l=j+2-m \\ t_l=\overline{n}}}^{j+1}
d_{\{t_1,\dots, t_{j+1-m}\}}(x,r) d_{\{t_{j+3-m}, \dots, t_{j+1}\}}(x,r)
\\
&\times\prod_{p=1}^{j+1-m}\prod\limits_{\substack{q=j+3-m\\t_q=\overline{t}_p}}^{j+1}
\Delta\big(x^{2(q-p+t_p-N-2)}\big)\Delta\big(x^{2(l+m+n-j-N-2)}\big)
\\
&\times{:}\overrightarrow{\Lambda}_{\Omega_{j+1}\setminus \{t_{j+2-m}\}}(z)
\Lambda_{t_{j+2-m}}\big(x^{j+1-2m}z\big){:}
\\
&-\sum_{\Omega_{j+1}\subset J_N}\sum\limits_{\substack{n=1\\t_{j+1-m}=n}}^N
\sum\limits_{\substack{l=j+1-m \\ t_l=\overline{n}}}^{j}
d_{\{t_1,\dots, t_{j-m}\}}(x,r) d_{\{t_{j+2-m},\dots, t_{j+1}\}}(x,r)
\\
&\times\prod_{p=1}^{j-m}\prod\limits_{\substack{q=j+2-m\\t_q=\overline{t}_p}}^{j+1}
\Delta\big(x^{2(q-p+t_p-N-2)}\big)\Delta\big(x^{2(l+m+n-j-N-2)}\big)
\\
&\times{:}\overrightarrow{\Lambda}_{\Omega_{j+1}\setminus \{t_{j+1-m}\}}(z)
\Lambda_{t_{j+1-m}}\big(x^{j+1-2m}z\big){:}
\\
&+\sum_{\Omega_{j+1} \subset J_N}\sum\limits_{\substack{n=1\\t_{j+2-m}=\overline{n}}}^N
\sum\limits_{\substack{k=1\\t_{k}=n}}^{j+1-m}
d_{\{t_1,\dots, t_{j+1-m}\}}(x,r) d_{\{t_{j+3-m}, \dots, t_{j+1}\}}(x,r)
\\
&\times\prod_{p=1}^{j+1-m}\prod\limits_{\substack{q=j+3-m\\t_q=\overline{t}_p}}^{j+1}
\Delta\big(x^{2(q-p+t_p-N-2)}\big)\Delta\big(x^{2(j+n-m-k-N)}\big)
\\
&\times{:}\overrightarrow{\Lambda}_{\Omega_{j+1}\setminus \{t_{j+2-m}\}}(z)
\Lambda_{t_{j+2-m}}\big(x^{j+1-2m}z\big){:}
\\
&-\sum_{\Omega_{j+1}\subset J_N}\sum\limits_{\substack{n=1\\t_{j+1-m}=\overline{n}}}^N
\sum\limits_{\substack{k=1\\{t_k=n}}}^{j-m}
d_{\{t_1, \dots, t_{j-m}\}}(x,r) d_{\{t_{j-m+1}, \dots, t_{j+1}\}}(x,r)
\\
&\times\prod_{p=1}^{j-m}\prod\limits_{\substack{q=j+2-m\\t_q=\overline{t}_p}}^{j+1}
\Delta\big(x^{2(q-p+t_p-N-2)}\big)\Delta\big(x^{2(j+n-m-k-N)}\big)
\\
&\times{:}\overrightarrow{\Lambda}_{\Omega_{j+1}\setminus \{t_{j+1-m}\}}(z)\Lambda_{t_{j+1-m}}\big(x^{j+1-2m}z\big){:}.
\end{align*}
Rewriting the sum yields
\begin{align*}
{G}_{j+1-2m}(z)={}&\sum_{\Omega_{j+1}\subset J_N}
d_{\{t_1,\dots,t_{j+1-m}\}}(x,r) d_{\{t_{j+3-m},\dots,t_{j+1}\}}(x,r)
\\
&\times\prod_{p=1}^{j+1-m} \prod\limits_{\substack{q=j+3-m\\t_q=\overline{t}_p}}^{j+1}
\Delta\big(x^{2(q-p+t_p-N-2)}\big)
\prod\limits_{\substack{p=1\\t_{j+2-m}=\overline{t}_p}}^{j+1-m}
\Delta\big(x^{2(j+t_p-m-p-N)}\big)
\\
&\times\!\!\!
\prod\limits_{\substack{q=j+3-m\\t_q=\overline{t}_{j+2-m}}}^{j+1}\!\!\!\!
\Delta\big(x^{2(q+m+t_{j+2-m}-j-N-2)}\big)
{:}\overrightarrow{\Lambda}_{\Omega_{j+1}\setminus \{t_{j+2-m}\}}(z)
\Lambda_{t_{j+2-m}}\big(x^{j+1-2m}z\big){:}
\\
&-\sum_{\Omega_{j+1}\subset J_N}
d_{\{t_1,\dots,t_{j-m}\}}(x,r) d_{\{t_{j+2-m},\dots,t_{j+1}\}}(x,r)
\\
&\times\prod_{p=1}^{j-m} \prod\limits_{\substack{q=j+2-m\\t_q=\overline{t}_p}}^{j+1}
\Delta\big(x^{2(q-p+t_p-N-2)}\big)
\prod\limits_{\substack{p=1\\t_{j+1-m}=\overline{t}_p}}^{j-m}
\Delta\big(x^{2(j+t_p-m-N-p)}\big)
\\
&\times\!\!\!\!\prod\limits_{\substack{q=j+2-m\\t_q=\overline{t}_{j+1-m}}}^{j+1}\!\!\!\!
\Delta\big(x^{2(q+m+t_{j+1-m}-j-N-3)}\big)
{:}\overrightarrow{\Lambda}_{\Omega_{j+1}\setminus \{t_{j+1-m}\}}(z)
\Lambda_{t_{j+1-m}}\big(x^{j+1-2m}z\big){:}.
\end{align*}
Using
\begin{gather*}
d_{\{t_1,\dots,t_{j+1-m}\}}(x,r)
d_{\{t_{j+3-m},\dots,t_{j+1}\}}(x,r)
\prod_{p=1}^{j+1-m} \prod\limits_{\substack{q=j+3-m\\t_q=\overline{t}_p}}^{j+1}
\Delta\big(x^{2(q-p+t_p-N-2)}\big)
\\[-.5ex] \qquad\phantom{=}
{}\times
\prod\limits_{\substack{p=1\\t_{j+2-m}=\overline{t}_p}}^{j+1-m}
\Delta\big(x^{2(j+t_p-m-p-N)}\big)
\prod\limits_{\substack{q=j+3-m\\t_q=\overline{t}_{j+2-m}}}^{j+1}
\Delta\big(x^{2(q+m+t_{j+2-m}-j-N-3)}\big)
\\[-.5ex] \qquad
{}=d_{\{t_1,\dots,t_{j-m}\}}(x,r) d_{\{t_{j-m+2},\dots,t_{j+1}\}}(x,r)
\prod_{p=1}^{j-m} \prod\limits_{\substack{q=j+2-m\\t_q=\overline{t}_p}}^{j+1}
\Delta\big(x^{2(q-p+t_p-N-2)}\big)
\\[-.5ex] \qquad\phantom{=}
{}\times\prod\limits_{\substack{p=1\\t_{j+1-m}=\overline{t}_p}}^{j-m}
\Delta\big(x^{2(j+t_p-m-p-N)}\big)
\prod\limits_{\substack{q=j+2-m\\t_q=\overline{t}_{j+1-m}}}^{j+1}
\Delta\big(x^{2(q+m+t_{j+1-m}-j-N-3)}\big)
\end{gather*}
and
\begin{gather*}
{:}\overrightarrow{\Lambda}_{\Omega_{j+1}\setminus \{t_{j+2-m}\}}(z)
\Lambda_{t_{j+2-m}}\big(x^{j+1-2m}z\big){:}
={:}\overrightarrow{\Lambda}_{\Omega_{j+1}\setminus \{t_{j+1-m}\}}(z)
\Lambda_{t_{j+1-m}}\big(x^{j+1-2m}z\big){:}
\end{gather*}
yields ${{G}_{j+1-2m}(z)}=0$, $1\leq m \leq \big[\frac{j}{2}\big]$, $1\leq j \leq N$.

{\sloppy We define $\overline{H}_{2N-j+2-2m}(z)$,
$1\leq j \leq N$, $1\leq m \leq N-\big[\frac{j-1}{2}\big]$,
as the coefficient of $\delta\big(x^{-2N+j-2+2m}z_2/z_1\big)$ in (\ref{eqn:L1j-2}).
We set
\begin{gather*}
\overline{H}_{2N-j+2-2m}(z)=\sum_{\varepsilon=\pm}\varepsilon
\big(\overline{\beta}_\varepsilon(z)+\overline{\gamma}_\varepsilon(z)
+\overline{\delta}_\varepsilon(z)\big),
%\label{def:bH2}
\end{gather*}}%
where we give
$\overline{\beta}_+(z)$, $\overline{\beta}_-(z)$, $\overline{\gamma}_+(z)$,
$\overline{\gamma}_-(z)$, $\overline{\delta}_+(z)$, and $\overline{\delta}_-(z)$
in~(\ref{def:bbeta+}), (\ref{def:bbeta-}), (\ref{def:bgamma+}), (\ref{def:bgamma-}),
(\ref{def:bdelta+}), and~(\ref{def:bdelta-}), respectively.
%%%%%%%%%%%%%%%%%%%%%%%%%%%%%%%%%%%%%%%%%%%%%
%%%%%%%%%%%%%%%%%%%%%%%%%%%%%%%%%%%%%%%%%%%%%
Adding the first term in (\ref{exchange:L3}) and the fourth term
in (\ref{exchange:L6}) yields
\begin{align}
\overline{\beta}_+(z)={}&\sum_{\Omega_j \subset J_N}
\sum_{n=m+1}^{\operatorname{Min}(N, j+m-1)}
\sum\limits_{\substack{k=1\\s_k=n
\\ s_l=\overline{n},\, l=m+j+1-n}}^{j+m-n}d_{\Omega_j}(x,r)
{:}\Lambda_{n}\big(x^{-2N+j+2m-2}z\big)\overrightarrow{\Lambda}_{\Omega_j}(z){:}\notag
\\[-.5ex]
&+\sum_{\Omega_j \subset J_N}\sum_{n=m+1}^{\operatorname{Min}(N, j+m)}
\sum\limits_{\substack{k=1\\s_{k-1}\prec n \prec s_k
\\ s_l=\overline{n},\, l=j+m+1-n}}^{j+m+1-n}
d_{\Omega_j}(x,r)\Delta\big(x^{2(m+j-N-k)}\big)\notag
\\[-.5ex]
&\phantom{+}\times{:}\Lambda_{n}\big(x^{-2N+j+2m-2}z\big)\overrightarrow{\Lambda}_{\Omega_j}(z){:}.
\label{def:bbeta+}
\end{align}
Adding the second term in (\ref{exchange:L3}) and the second term in~(\ref{exchange:L6}) yields
\begin{align}
\overline{\beta}_-(z)={}&
\sum_{\Omega_j \subset J_N}\sum_{n=m}^{\operatorname{Min}(N, j+m-2)}
\sum\limits_{\substack{k=1\\s_k=n
\\{_l=\overline{n},\, l=j+m-n}}}^{j+m-n-1}
d_{\Omega_j}(x,r)
{:}\Lambda_{n}\big(x^{-2N+j+2m-2}z\big)\overrightarrow{\Lambda}_{\Omega_j}(z){:}\notag
\\[-.5ex]
&+\sum_{\Omega_j \subset J_N}\sum_{n=m}^{\operatorname{Min}(N, j+m-1)}
\sum\limits_{\substack{k=1\\s_{k-1}\prec n \prec s_k
\\s_l=\overline{n},\, l=j+m-n}}^{j+m-n}d_{\Omega_j}(x,r)\Delta\big(x^{2(m+j-N-k)}\big)\notag
\\[-.5ex]
&\phantom{+}\times{:}\Lambda_{n}\big(x^{-2N+j+2m-2}z\big)\overrightarrow{\Lambda}_{\Omega_j}(z){:}.
\label{def:bbeta-}
\end{align}
Adding the first term in (\ref{exchange:L5}) yields
\begin{gather}
\overline{\gamma}_+(z)=
\sum\limits_{\substack{\Omega_j \subset J_N\\s_k=0,\, k=j+m-N}}
d_{\Omega_j}(x,r){:}\Lambda_0\big(x^{-2N+j-2+2m}z\big)
\overrightarrow{\Lambda}_{\Omega_j}(z){:}.
\label{def:bgamma+}
\end{gather}
Adding the second term in (\ref{exchange:L5}) yields
\begin{gather}
\overline{\gamma}_-(z)=\sum\limits_{\substack{\Omega_j \subset J_N
\\s_k=0,\, k=j+m-N-1}}d_{\Omega_j}(x,r)
{:}\Lambda_0\big(x^{-2N+j-2+2m}z\big)\overrightarrow{\Lambda}_{\Omega_j}(z){:}.
\label{def:bgamma-}
\end{gather}
%%%%%%%%%%%%%%%%%%%%%%%%%%%%%%%%%%%%%%%%%%%%%%
Adding the first term in (\ref{exchange:L4}) and
the fourth term in (\ref{exchange:L7}) yields
\begin{align}
\overline{{\delta}}_+(z)={}&\sum_{\Omega_j \subset J_N}\sum_{n=2N+2-j-m}^{N}
\sum\limits_{\substack{l=m+j+n-2N\\
s_k=n, \, k=j+m+n-2N-1\\ s_l=\overline{n}}}^{j}d_{\Omega_j}(x,r)
{:}\overrightarrow{\Lambda}_{\Omega_j}(z)\Lambda_{\overline{n}}\big(x^{-2N+j+2m-2}z\big){:}\notag
\\
&+\sum_{\Omega_j \subset J_N}\sum_{n=2N+2-j-m}^{N}
\sum\limits_{\substack{l=m+j+n-2N-1\\s_k=n,\, k=j+m+n-2N-1
\\s_l\prec \overline{n} \prec s_{l+1}}}^{j}
d_{\Omega_j}(x,r)\Delta\big(x^{2(N+l+1-m-j)}\big)\notag
\\
&\phantom{+}\times{:}\overrightarrow{\Lambda}_{\Omega_j}(z)
\Lambda_{\overline{n}}\big(x^{-2N+j+2m-2}z\big){:}.
\label{def:bdelta+}
\end{align}
Adding the second term in (\ref{exchange:L4}) and
the second term in (\ref{exchange:L7}) yields
\begin{align}
\overline{\delta}_-(z)={}&
\sum_{\Omega_j \subset J_N}\sum_{n=2N+3-j-m}^{N}
\sum\limits_{\substack{l=j+m+n-2N-1\\s_k=n,\, k=j+m+n-2N-2
\\s_l=\overline{n}}}^{j}d_{\Omega_j}(x,r)
{:}\overrightarrow{\Lambda}_{\Omega_j}(z)\Lambda_{\overline{n}}\big(x^{-2N+j+2m-2}z\big){:}\notag
\\
&+\sum_{\Omega_j \subset J_N}\sum_{n=2N+3-j-m}^{N}
\sum\limits_{\substack{l=j+m+n-2N-2\\s_k=n,\, k=j+m+n-2N-2
\\s_l\prec \overline{n} \prec s_{l+1}}}^{j}
d_{\Omega_j}(x,r)\Delta\big(x^{2(N+l+1-m-j)}\big)\notag
\\
&\phantom{+}\times{:}\overrightarrow{\Lambda}_{\Omega_j}(z)
\Lambda_{\overline{n}}\big(x^{-2N+j+2m-2}z\big){:}.
\label{def:bdelta-}
\end{align}

We show $\overline{H}_{2N-j+2-2m}(z)=0$, $1\leq j \leq N$, $1\leq m \leq N-\big[\frac{j-1}{2}\big]$.
In this proof we frequently use relation (\ref{eqn:fusion-Lambda3}).
The proof is divided into three cases: $(i)$, $1\leq m \leq N-j$,
$(ii)$, $m=N+1-j$, and $(iii)$, $N+2-j \leq m \leq N-\big[\frac{j-1}{2}\big]$.

First, we study the case $(i)$, $j+m \leq N$.
In the case $(i)$, $\overline{\gamma}_\pm(z)$ and
$\overline{\delta}_\pm(z)$ vanish. Hence, we have
$\overline{H}_{2N-j+2-2m}(z)=\overline{\beta}_+(z)-\overline{\beta}_-(z)$.
We start from $\overline{\beta}_+(z)$. Rewriting the sum yields
\begin{align}
\overline{\beta}_+(z)={}&
\Bigg(\sum_{n=m+1}^{\operatorname{Min}(N, j+m-1)}
\sum_{k=1}^{j+m-n}\sum\limits_{\substack{\Omega_j \subset J_N\\s_k=n}}
\sum\limits_{\substack{r=0\\(s_l,s_{l+1},\dots,s_{l+r})=
(\overline{n}, \overline{n-1},\dots,\overline{n-r})
\\ \overline{n-r-1}\prec s_{l+r+1},\, l=m+j+1-n}}^{n-m-1}\Delta\big(x^{2(m+j-N-k)}\big)\notag
\\
&+\sum_{n=m+1}^{\operatorname{Min}(N, j+m)}\sum_{k=1}^{j+m-n-1}
\sum\limits_{\substack{\Omega_j \subset J_N\\s_{k-1}\prec n \prec s_k}}
\sum\limits_{\substack{r=0\\(s_l,s_{l+1},\dots,s_{l+r})=
(\overline{n}, \overline{n-1},\dots,\overline{n-r})
\\ \overline{n-r-1}\prec s_{l+r+1},\, l=m+j+1-n}}^{n-m-1}
\Delta\big(x^{2(m+j-N-k)}\big)\Bigg)\notag
\\
&\phantom{+}\times\prod_{a=1}^{k-1}\prod\limits_{\substack{b=1
\\s_a=n-b}}^r \Delta\big(x^{2(m+j-N-a)}\big)
\prod_{p=1}^{k-1}\prod\limits_{\substack{q=l+r+1
\\s_q=\overline{s}_p}}^j\Delta\big(x^{2(q-p+s_p-N-1)}\big)
\notag
\\
&\phantom{+}\times\prod\limits_{\substack{k+1 \leq p<q \leq l-1\\s_q=\overline{s}_p}}
\Delta\big(x^{2(q-p+s_p-N-1)}\big)
{:}\Lambda_n\big(x^{-2N+j+2m-2}z\big)\overrightarrow{\Lambda}_{\Omega_j}(z){:}.
\label{eqn:bbeta+}
\end{align}
Using
\begin{gather}
{:}\Lambda_n\big(x^{-2N+j+2m-2}z\big)\overrightarrow{\Lambda}_{\{s_1,s_2, \dots, s_{l-1}, \overline{n}, \overline{n-1}, \dots, \overline{n-r}, s_{l+r+1}, s_{l+r+2},\dots, s_j\}}(z){:}\notag
\\ \qquad
{}={:}\Lambda_{n-r-1}\big(x^{-2N+j+2m-2}z\big)\overrightarrow{\Lambda}_{\{s_1,s_2, \dots, s_{l-1}, \overline{n-1}, \overline{n-2}, \dots, \overline{n-r-1}, s_{l+r+1}, s_{l+r+2},
\dots, s_j\}}(z){:},\notag
\\
0\leq r \leq n-m-1,\qquad l=m+j+1-n,
\label{eqn:subscript1}
\end{gather}
obtained from (\ref{eqn:fusion-Lambda3}), and replacing $n$ by $n+1$
yields $\overline{\beta}_+(z)=\overline{\beta}_-(z)$. Hence, we obtain
$\overline{H}_{2N-j+2-2m}(z)=\overline{\beta}_+(z)-\overline{\beta}_-(z)=0$
for $1\leq m \leq N-j$.
%%%%%%%%%%%%%%%%%%%%%%%%%%%%%%%%%%%%%%%%%%%%%%
%%%%%%%%%%%%%%%%%%%%%%%%%%%%%%%%%%%%%%%%%%%%%%

Next, we examine the case $(ii)$, $m=N+1-j$.
In the case $(ii)$, $\overline{\delta}_\pm(z)$ and
$\overline{\gamma}_-(z)$ vanish. Hence, we have
$\overline{H}_{2N-j+2-2m}(z)=\overline{\beta}_+(z)+\overline{\gamma}_+(z)
-\overline{\beta}_-(z)$.
We start from $\overline{\beta}_+(z)+\overline{\gamma}_+(z)$.
Using~(\ref{eqn:fusion-Lambda3}) yields
\begin{gather}
\overline{\gamma}_+(z)=\Delta(1)\sum_{k=1}^{j}
\sum\limits_{\substack{\Omega_j \subset J_N
\\ \overline{N+1-k}\prec s_{k+1}\prec \cdots \prec s_j}}\!\!\!\!\!\!\!\!\!\!
{:}\Lambda_{N+1-k}\big(x^{-j}z\big)\overrightarrow{\Lambda}_{\{\overline{N},\overline{N-1},
\dots,\overline{N+1-k}, s_{k+1}, s_{k+2}, \dots, s_j\}}(z){:}.
\label{eqn:bgamma+0}
\end{gather}
Using (\ref{eqn:bbeta+}), (\ref{eqn:subscript1}), and (\ref{eqn:bgamma+0})
yields $\overline{\beta}_+(z)+\overline{\gamma}_+(z)=\overline{\beta}_-(z)$.
Hence, we obtain
$\overline{H}_{2N-j+2-2m}(z)\allowbreak=\overline{\beta}_+(z)
+\overline{\gamma}_+(z)-\overline{\beta}_-(z)=0$
for $m=N-j+1$.
%%%%%%%%%%%%%%%%%%%%%%%%%%%%%%%%%%%%%%%
%%%%%%%%%%%%%%%%%%%%%%%%%%%%%%%%%%%%%%%

Next, we examine the case $(iii)$, $N+2-j \leq m \leq N-[\frac{j-1}{2}]$.
Rewriting the sum yields
\begin{align}
\overline{\delta}_+(z)={}&
\Bigg(\sum_{n=2N+2-j-m}^{N}\sum_{l=j+m+n-2N}^{j}
\sum\limits_{\substack{\Omega_j \subset J_N\\s_l=\overline{n}}}
\sum\limits_{\substack{r=0\\(s_{k-r}, s_{k-r+1},\dots,s_{k})\\=
(n-r, n-r+1,\dots, n)\\s_{k-r-1} \prec n-r-1\\ k=m+j+n-2N-1}}^{j+m+n-2N-2}
\Delta\big(x^{2(l-j-m+N)}\big)\notag
\\
&+\sum_{n=2N+2-j-m}^{N}\sum_{l=m+j+n-2N-1}^j
\sum\limits_{\substack{\Omega_j \subset J_N \\s_l=\overline{n}}}
\sum\limits_{\substack{r=0\\(s_{k-r}, s_{k-r+1},\dots,s_{k})\\=(n-r, n-r+1,\dots, n)
\\s_{k-r-1} \prec n-r-1\\ k=m+j+n-2N-1}}^{j+m+n-2N-2}
\Delta\big(x^{2(l-j-m+N+1)}\big)\Bigg)\notag
\\
&\phantom{+}\times\prod_{a=1}^r \prod\limits_{\substack{b=l+1\\\overline{s}_b=n-a}}^j
\Delta\big(x^{2(N+b-j-m)}\big)\prod_{p=1}^{n-r-1}
\prod\limits_{\substack{q=l+1\\ s_q=\overline{s}_p}}^j
\Delta\big(x^{2(q-p+s_p-N-1)}\big)\notag
\\
&\phantom{+}\times\prod\limits_{\substack{k+1 \leq p<q \leq l-1\\s_q=\overline{s}_p}}
\Delta\big(x^{2(q-p+s_p-N-1)}\big)
{:}\overrightarrow{\Lambda}_{\Omega_j}(z)\Lambda_{\overline{n}}\big(x^{-2N+j+2m-2}z\big){:}
\label{eqn:bdelta+}
\end{align}
and
\begin{align}
\overline{\gamma}_+(z)={}&\sum\limits_{\substack{\Omega_j \subset J_N
\\s_k=0,\, k=j+m-N\\ \overline{N}\prec s_{k+1}}}
\prod_{p=1}^{k-1}\prod\limits_{\substack{q=k+1\\ s_q=\overline{s}_p}}^j
\Delta\big(x^{2(q-p+s_p-N-1)}\big)
{:}\overrightarrow{\Omega}_{\Lambda_j}(z)\Lambda_0\big(x^{-2N+j-2+2m}\big){:}\notag
\\
&+\sum\limits_{\substack{\Omega_j \subset J_N\\s_k=0,\, k=j+m-N
\\ \overline{N}\prec s_{k+1}}}
\sum\limits_{\substack{r=0\\(s_{k+1},s_{k+2},\dots, s_{k+r+1})\\
=(\overline{N}, \overline{N-1},\dots, \overline{N-r})
\\ \overline{N-r-1}\prec s_{k+r+2}}}^{j-1}
\prod_{a=1}^{k-1}\prod_{b=0}^r \Delta\big(x^{2(j+m-N-a)}\big)\notag
\\
&\phantom{+}\times\prod_{p=1}^{k-1}\prod\limits_{\substack{q=k+r+2\\s_q=\overline{s}_p}}^j
\Delta\big(x^{2(q-p+s_p-N-1)}\big){:}
\overrightarrow{\Omega}_{\Lambda_j}(z)\Lambda_0\big(x^{-2N+j-2+2m}\big){:}.
\label{eqn:bgamma+}
\end{align}
Using (\ref{eqn:fusion-Lambda1}) and (\ref{eqn:fusion-Lambda3}) yields
\begin{gather}
{:}\overrightarrow{\Lambda}_{\{
s_1,s_2, \dots, s_{k-r-1}, n-r, n-r+1,\dots, n, s_{k+1}, s_{k+2}, \dots, s_{j}
\}}(z) \Lambda_{\overline{n}}\big(x^{-2N+j+2m-2}z\big){:}\notag
\\ \qquad
{}={:}\overrightarrow{\Lambda}_{\{
s_1,s_2, \dots, s_{k-r-1}, n-r-1, n-r,\dots, n-1, s_{k+1}, s_{k+2}, \dots, s_{j}
\}}(z) \Lambda_{\overline{n-r-1}}\big(x^{-2N+j+2m-2}z\big){:},\notag
\\
0\leq r \leq k-1,\quad k=j+m+n-2N-1,
\label{eqn:subscript2}
\end{gather}
and
\begin{gather}
{:}\overrightarrow{\Lambda}_{\{
s_1,s_2, \dots, s_{k-1}, 0, \overline{N}, \overline{N-1}, \dots,
\overline{N-r}, s_{k+r+2}, s_{k+r+3}, \dots, s_{j}
\}}(z) \Lambda_{0}\big(x^{-2N+j+2m-2}z\big){:}\notag
\\ \qquad
{}=\Delta(1)
{:}\overrightarrow{\Lambda}_{\{
s_1,s_2, \dots, s_{k-1}, \overline{N}, \overline{N-1}, \dots,
\overline{N-r-1}, s_{k+r+2}, s_{k+r+3}, \dots, s_{j}\}}(z)
\Lambda_{N-r-1}\big(x^{-2N+j+2m-2}z\big){:},\notag
\\
0\leq r \leq 2N+l+1-j-m,\qquad k=j+m+n-2N-1.
\label{eqn:subscript3}
\end{gather}
Using (\ref{eqn:bbeta+}), (\ref{eqn:subscript1}), (\ref{eqn:bdelta+}),
(\ref{eqn:bgamma+}), (\ref{eqn:subscript2}), and (\ref{eqn:subscript3})
and replacing $n$ by $n+1$
yield $\overline{\beta}_+(z)+\overline{\gamma}_+(z)+\overline{\delta}_+(z)
=\overline{\beta}_-(z)+\overline{\gamma}_-(z)+\overline{\delta}_-(z)$.
Hence, we obtain
$\overline{H}_{2N-j+2-2m}(z)=\sum_{\varepsilon=\pm}\varepsilon
\big(\overline{\beta}_\varepsilon(z)+\overline{\gamma}_\varepsilon(z)
+\overline{\delta}_\varepsilon(z)\big)=0$
for $N+2-j \leq m \leq N-[\frac{j-1}{2}]$.
Finally, we obtain
$\overline{H}_{2N-j+2-2m}(z)=0$ for $1 \leq m \leq N-\big[\frac{j-1}{2}\big]$.

{\sloppy We define ${H}_{2N-j+2-2m}(z)$, $1\leq j \leq N$, $1\leq m \leq N-[\frac{j-1}{2}]$,
as the coefficient of $\delta(x^{-2N+j-2+2m}z_2/z_1)$ in (\ref{eqn:L1j-2}). We set
\begin{gather}
{H}_{2N-j+2-2m}(z)=\begin{cases}
\displaystyle \sum_{\varepsilon=\pm}
\varepsilon ({\beta}_\varepsilon(z)+{\gamma}_\varepsilon(z)+{\delta}_\varepsilon(z))
& {\rm otherwise},
\\
0&{\rm {if}}\quad j~{\rm {is}~even},\quad m=N-\big[\frac{j-1}{2}\big],
\end{cases}\!\!\!\label{def:H2}
\end{gather}}%
where we give
${\beta}_+(z)$, ${\beta}_-(z)$, ${\gamma}_+(z)$,
${\gamma}_-(z)$, ${\delta}_+(z)$, and ${\delta}_-(z)$
in (\ref{def:beta+}), (\ref{def:beta-}), (\ref{def:gamma+}), (\ref{def:gamma-}),
(\ref{def:delta+}), and (\ref{def:delta-}), respectively.
In (\ref{def:H2}) we define $H_0(z)=0$ to avoid ambiguity of
$\overline{H}_0(z)$ and $H_0(z)$. In the case when $j$ is even, we have
${\rm LHS}_{1,j}=c(x,r)\big(\overline{H}_0(z_2)-H_0(z_2))\delta(z_2/z_1)
+\overline{H}_2(z_2)\delta(x^{-2}z_2/z_1)-H_2(z_2)\delta(x^2z_2/z_1)+\cdots\big)$.
Adding the first term in (\ref{exchange:L4}) and the fourth term
in (\ref{exchange:L7}) yields
\begin{align}
\beta_+(z)={}&\sum_{\Omega_j \subset J_N}
\sum_{n=m}^{\operatorname{Min}(N, j+m-2)}
\sum\limits_{\substack{l=n+2-m\\s_k=n,\, k=n+1-m
\\{_l=\overline{n}}}}^{j}d_{\Omega_j}(x,r)
{:}\overrightarrow{\Lambda}_{\Omega_j}(z)
\Lambda_{\overline{n}}\big(x^{2N+2-j-2m}z\big){:}\notag
\\
&+\sum_{\Omega_j \subset J_N}\sum_{n=m}^{\operatorname{Min}(N, j+m-1)}
\sum\limits_{\substack{l=n+1-m\\s_k=n, \, k=n+1-m\\
s_l \prec \overline{n} \prec s_{l+1}}}^{j}d_{\Omega_j}(x,r)
\Delta\big(x^{2(l+m-N-1)}\big)\notag
\\
&\phantom{+}\times{:}\overrightarrow{\Lambda}_{\Omega_j}(z)
\Lambda_{\overline{n}}\big(x^{2N+2-j-2m}z\big){:}.
\label{def:beta+}
\end{align}
%%%%%%%%%%%%%%%%%%%%%%%%%%%%%%%%%%%%%%%%%%%%%%%%%%%%%
Adding the second term in (\ref{exchange:L4}) and the second term in
(\ref{exchange:L7}) yields
\begin{align}
{\beta}_-(z)={}&\sum_{\Omega_j \subset J_N}\sum_{n=m+1}^{\operatorname{Min}(N, j+m-1)}
\sum\limits_{\substack{l=n+1-m\\s_k=n,\, k=n-m
\\s_l=\overline{n}}}^{j}d_{\Omega_j}(x,r)
{:}\overrightarrow{\Lambda}_{\Omega_j}(z)\Lambda_{\overline{n}}\big(x^{2N+2-j-2m}z\big){:}\notag
\\
&+\sum_{\Omega_j \subset J_N}\sum_{n=m+1}^{\operatorname{Min}(N, j+m)}
\sum\limits_{\substack{l=n-m\\s_k=n, \, k=n-m
\\s_l \prec \overline{n} \prec s_{l+1}}}^{j}
d_{\Omega_j}(x,r)\Delta\big(x^{2(l+m-N-1)}\big)\notag
\\
&\phantom{+}\times{:}\overrightarrow{\Lambda}_{\Omega_j}(z)\Lambda_{\overline{n}}\big(x^{2N+2-j-2m}z\big){:}.
\label{def:beta-}
\end{align}
%%%%%%%%%%%%%%%%%%%%%%%%%%%%%%%%%%%%%%%%%%%%%%%
Adding the first term in (\ref{exchange:L5}) yields
\begin{gather}
{\gamma}_+(z)=\sum\limits_{\substack{\Omega_j \subset J_N
\\ s_k=0,\, k=N+2-m}}d_{\Omega_j}(x,r)
{:}\Lambda_0\big(x^{2N+2-j-2m}z\big)\overrightarrow{\Lambda}_{\Omega_j}(z){:}.
\label{def:gamma+}
\end{gather}
%%%%%%%%%%%%%%%%%%%%%%%%%%%%%%%%%%%%%%%%%%%%%%
Adding the second term in (\ref{exchange:L5}) yields
\begin{gather}
{\gamma}_-(z)=\sum\limits_{\substack{\Omega_j \subset J_N\\s_k=0,\, k=N+1-m}}
d_{\Omega_j}(x,r){:}\Lambda_0\big(x^{2N+2-j-2m}z\big)\overrightarrow{\Lambda}_{\Omega_j}(z){:}.
\label{def:gamma-}
\end{gather}
%%%%%%%%%%%%%%%%%%%%%%%%%%%%%%%%%%%%%%%%%%%%%%
Adding the first term in (\ref{exchange:L3}) and the fourth term in (\ref{exchange:L6}) yields
\begin{align}
\delta_+(z)={}&\sum_{\Omega_j \subset J_N}\sum_{n=2N+3-j-m}^{N}
\sum\limits_{\substack{k=1\\s_k=n\\s_l=\overline{n},\, l=2N+3-m-n}}^{2N+2-m-n}
d_{\Omega_j}(x,r){:}\Lambda_{n}\big(x^{2N+2-j-2m}z\big)
\overrightarrow{\Lambda}_{\Omega_j}(z){:}\notag
\\
&+\sum_{\Omega_j \subset J_N}\sum_{n=2N+3-j-m}^{N}
\sum\limits_{\substack{k=1\\s_{k-1} \prec n \prec s_k
\\s_l=\overline{n},\, l=2N+3-m-n}}^{2N+3-m-n}
d_{\Omega_j}(x,r)\Delta\big(x^{2(N+2-m-k)}\big)\notag
\\
&\phantom{+}\times{:}\Lambda_{n}\big(x^{2N+2-j-2m}z\big)\overrightarrow{\Lambda}_{\Omega_j}(z){:}.
\label{def:delta+}
\end{align}
Adding the second term in (\ref{exchange:L3}) and the second term in (\ref{exchange:L6}) yields
\begin{align}
{\delta}_-(z)={}&\sum_{\Omega_j \subset J_N}\sum_{n=2N+2-j-m}^{N}
\sum\limits_{\substack{k=1\\s_k=n
\\s_l=\overline{n},\, l=2N+2-m-n}}^{2N+1-m-n}d_{\Omega_j}(x,r)
{:}\Lambda_{n}\big(x^{2N+2-j-2m}z\big)\overrightarrow{\Lambda}_{\Omega_j}(z){:}\notag
\\
&+\sum_{\Omega_j \subset J_N}\sum_{n=2N+2-j-m}^{N}
\sum\limits_{\substack{k=1\\s_{k-1}\prec n \prec s_k
\\s_l=\overline{n}, \, l=2N+2-m-n}}^{2N+2-m-n}d_{\Omega_j}(x,r)\Delta\big(x^{2(N+2-m-k)}\big) \notag
\\
&\phantom{+}\times{:}\Lambda_{n}\big(x^{2N+2-j-2m}z\big)\overrightarrow{\Lambda}_{\Omega_j}(z){:}.
\label{def:delta-}
\end{align}
%%%%%%%%%%%%%%%%%%%%%%%%%%%%%%%%%%%%%%%%%%
%%%%%%%%%%%%%%%%%%%%%%%%%%%%%%%%%%%%%%%%%%
The relation $H_{2N+2-j-2m}(z)=0$ is shown in the same way as
$\overline{H}_{2N+2-j-2m}(z)=0$.
\end{proof}

{\samepage\begin{Lemma}
%\label{lemma:4-7}
The {currents} $T_i(z)$ satisfy the following fusion relation
\begin{gather}
\lim_{z_1 \to x^{\pm (i+j)}z_2}
\bigg(1-\frac{x^{\pm (i+j)}z_2}{z_1}\bigg)f_{i,j}\bigg(\frac{z_2}{z_1}\bigg)T_i(z_1)T_j(z_2)\notag
\\ \qquad
{}=\mp c(x,r)\prod_{l=1}^{\operatorname{Min}(i,j)-1}\Delta\big(x^{2l+1}\big)T_{i+j}\big(x^{\pm i}z_2\big),\qquad
1\leq i, j \leq N.
\label{eqn:fusion-T1}
\end{gather}
\end{Lemma}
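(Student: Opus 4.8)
The plan is to prove both signs by a direct normal-ordering computation, with the $-$ case the mirror image of the $+$ case, so I describe the $+$ case. First I would expand the product using (\ref{def:Ti(z)}),
\[
f_{i,j}(z_2/z_1)T_i(z_1)T_j(z_2)=\sum_{\Omega_i,\Omega_j}d_{\Omega_i}(x,r)\,d_{\Omega_j}(x,r)\,f_{i,j}(z_2/z_1)\overrightarrow{\Lambda}_{\Omega_i}(z_1)\overrightarrow{\Lambda}_{\Omega_j}(z_2),
\]
and normal-order each summand. Since each $\overrightarrow{\Lambda}$ is internally normal-ordered, only the cross contractions of the $i$ operators carrying $z_1$ with the $j$ operators carrying $z_2$ contribute, and by (\ref{eqn:Lambda}) each such contraction is a ratio $\Delta(\cdots z_2/z_1)/f_{1,1}(\cdots z_2/z_1)$, with one extra regular $\Delta$ factor in the conjugate case.

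The key first step is a pole analysis. Writing the contraction argument of $\Lambda_{s_p}\big(x^{-i+2p-1}z_1\big)$ with $\Lambda_{t_q}\big(x^{-j+2q-1}z_2\big)$ as $x^{i-j+2q-2p\pm1}z_2/z_1$ and demanding a pole of $\Delta$ at $z_1=x^{i+j}z_2$, I would show that the only surviving case is $q-p=j-1$, i.e.\ $(p,q)=(1,j)$ with $s_1\succ t_j$ (equivalently $\max\Omega_j\prec\min\Omega_i$). Hence, after multiplying by $\big(1-x^{i+j}z_2/z_1\big)$ and letting $z_1\to x^{i+j}z_2$, all non-diagonal pairs drop out, and for a surviving diagonal pair the arguments align so that ${:}\overrightarrow{\Lambda}_{\Omega_i}(z_1)\overrightarrow{\Lambda}_{\Omega_j}(z_2){:}$ collapses to $\overrightarrow{\Lambda}_{\Omega_i\cup\Omega_j}\big(x^iz_2\big)$. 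As the diagonal pairs are in bijection with the size-$(i+j)$ subsets $\Omega\subset J_N$ (split $\Omega$ after its $j$-th element), the surviving terms sum to $T_{i+j}\big(x^iz_2\big)$.

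It then remains to match the scalar prefactor. The distinguished pole contributes, via the inversion symmetry $\Delta(z)=\Delta\big(z^{-1}\big)$, the factor
\[
\lim_{z_1\to x^{i+j}z_2}\big(1-x^{i+j}z_2/z_1\big)\Delta\big(x^{i+j-1}z_2/z_1\big)=-c(x,r),
\]
which supplies the $\mp c(x,r)$. The remaining factor, namely $f_{i,j}\big(x^{-(i+j)}\big)\,d_{\Omega_i}(x,r)\,d_{\Omega_j}(x,r)$ times all regular cross contractions at $z_1=x^{i+j}z_2$, must be shown to equal $\prod_{l=1}^{\operatorname{Min}(i,j)-1}\Delta\big(x^{2l+1}\big)\,d_{\Omega_i\cup\Omega_j}(x,r)$. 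Here I would classify the conjugate pairs of $\Omega_i\cup\Omega_j$ as internal to $\Omega_i$, internal to $\Omega_j$, or crossing the junction: the first two groups reproduce $d_{\Omega_i}(x,r)$ and $d_{\Omega_j}(x,r)$ verbatim, since the position shift by $j$ leaves $q-p$ and $s_p$ unchanged, while the crossing pairs together with the universal product $\prod_{l=1}^{\operatorname{Min}(i,j)-1}\Delta\big(x^{2l+1}\big)$ must be generated by the regular cross contractions and $f_{i,j}$ decomposed through (\ref{eqn:fusion-f1}) and (\ref{eqn:fusion-f2}).

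The main obstacle is exactly this last $\Delta$-bookkeeping: one must check that the $f_{1,1}$ denominators from (\ref{eqn:Lambda}) cancel precisely against the $f_{1,1}$ numerators in the decomposition of $f_{i,j}$, and that the surviving $\Delta$ factors reorganize into the crossing-pair contributions of $d_{\Omega_i\cup\Omega_j}(x,r)$ plus the clean $\Omega$-independent product $\prod_{l=1}^{\operatorname{Min}(i,j)-1}\Delta\big(x^{2l+1}\big)$, which then factors out of the sum over $\Omega$. This is a more elaborate version of the $d$-manipulations already carried out in the proof of Lemma~\ref{lemma:4-6} (the $\overline{G}$, $G$, $\overline{H}$, $H$ computations), and it leans on $\Delta(z)=\Delta\big(z^{-1}\big)$ together with the fusion identities (\ref{eqn:fusion-f5}) and (\ref{eqn:fusion-f6}). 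As a consistency check, the case $i=1$ is recovered from Lemma~\ref{lemma:4-6} by reading off the coefficient of $\delta\big(x^{j+1}z_2/z_1\big)$, and the case $i=j=1$ follows from the explicit contraction above.
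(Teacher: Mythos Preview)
Your approach is essentially the paper's: expand into pairs $(\Omega_i^{(1)},\Omega_j^{(2)})$, isolate the surviving pairs via the pole condition $t_j\prec s_1$ (resp.\ $s_i\prec t_1$), collapse the normal-ordered product to $\overrightarrow{\Lambda}_{\Omega_{i+j}}(x^{\pm i}z_2)$, and match the scalar prefactor. The one place you overestimate the difficulty is the scalar step you call the ``main obstacle'': the paper dispatches it in one line by writing, for a surviving pair, the closed formula
\[
f_{i,j}\Big(\tfrac{z_2}{z_1}\Big)\overrightarrow{\Lambda}_{\Omega_i^{(1)}}(z_1)\overrightarrow{\Lambda}_{\Omega_j^{(2)}}(z_2)
=\prod_{k=0}^{\min(i,j)-1}\!\!\Delta\Big(\tfrac{x^{\pm(2k+1-i-j)}z_2}{z_1}\Big)
\prod_{\substack{1\le p\le i,\,1\le q\le j\\ t_q=\overline{s_p}}}\!\!\Delta\Big(\tfrac{x^{\pm\{2(q-p+s_p-N-1+i)-i-j\}}z_2}{z_1}\Big)
\,{:}\overrightarrow{\Lambda}_{\Omega_i^{(1)}}(z_1)\overrightarrow{\Lambda}_{\Omega_j^{(2)}}(z_2){:},
\]
obtained directly from (\ref{eqn:Lambda}) together with (\ref{eqn:fusion-f1})--(\ref{eqn:fusion-f2}): all $ij$ copies of $f_{1,1}$ cancel exactly (reindex $p\mapsto i+1-p$), and what remains is already factored. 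Taking the limit, the $k=0$ factor gives $\mp c(x,r)$, the remaining $k$'s give $\prod_{l=1}^{\min(i,j)-1}\Delta(x^{2l+1})$ via $\Delta(z)=\Delta(z^{-1})$, and the second product is precisely the crossing-pair part of $d_{\Omega_{i+j}}(x,r)$. No appeal to the Lemma~\ref{lemma:4-6} machinery or to (\ref{eqn:fusion-f5})--(\ref{eqn:fusion-f6}) is needed; the computation here is in fact simpler than those.
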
}

\begin{proof}
For %subsets
$\Omega_i^{(1)}=\{s_1, s_2, \dots, s_i\} \subset J_N$
with $s_1\prec s_2 \prec \cdots \prec s_i$ and
$\Omega_j^{(2)}=\{t_1, t_2, \dots, t_j\}\allowbreak \subset J_N$
with $t_1\prec t_2 \prec \cdots \prec t_j$,
we set $\Omega_{i+j}=\Omega_i^{(1)} \cup \Omega_j^{(2)}$.
From (\ref{eqn:Lambda}), the necessary and sufficient condition that
$f_{i,j}(z_2/z_1)\overrightarrow{\Lambda}_{\Omega_i^{(1)}}(z_1)
\overrightarrow{\Lambda}_{\Omega_j^{(2)}}(z_2)$
has a pole at $z_1=x^{-(i+j)}z_2$ (respectively $z_1=x^{i+j}z_2$)
is $s_i \prec t_1$ (respectively $t_j \prec s_1$).
In the case when $s_i \prec t_1$ or $t_j \prec s_1$, we obtain\vspace{-1ex}
\begin{gather*}
f_{i,j}\bigg(\frac{z_2}{z_1}\bigg)
\overrightarrow{\Lambda}_{\Omega_i^{(1)}}(z_1)
\overrightarrow{\Lambda}_{\Omega_j^{(2)}}(z_2)
=\prod_{k=0}^{\operatorname{Min}(i,j)-1}
\Delta\bigg(\frac{x^{{\pm(2k+1-i-j)}}z_2}{z_1}\bigg)
\\ \qquad
{}\times\prod_{p=1}^i\prod\limits_{\substack{q=1\\t_q=\overline{s}_p}}^j
\Delta\bigg(\frac{x^{{\pm \{2(q-p+s_p-N-1+i)-i-j\}}}z_2}{z_1}\bigg){:}
\overrightarrow{\Lambda}_{\Omega_i^{(1)}}(z_1)
\overrightarrow{\Lambda}_{\Omega_j^{(2)}}(z_2){:}.
\end{gather*}
The signs $\pm$
in the products in the above expression
of ${f_{i,j}(z_2/z_1)\overrightarrow{\Lambda}_{\Omega_i^{(1)}}(z_1)
\overrightarrow{\Lambda}_{\Omega_j^{(2)}}(z_2)}$ are in the same order.
The upper sign is for $s_i \prec t_1$, and the lower sign is for $t_j \prec s_1$.
Taking the limit yields\looseness=-1
\begin{gather}
\lim_{z_1 \to x^{\pm (i+j)}z_2}
\bigg(1-\frac{x^{\pm (i+j)}z_2}{z_1}\bigg)f_{i,j}\bigg(\frac{z_2}{z_1}\bigg)
\overrightarrow{\Lambda}_{\Omega_i^{(1)}}(z_1)
\overrightarrow{\Lambda}_{\Omega_j^{(2)}}(z_2)\notag
=\mp c(x,r)\prod_{l=1}^{\operatorname{Min}(i,j)-1}\Delta\big(x^{2l+1}\big)
\\ \qquad
{}\times\prod_{p=1}^i\prod\limits_{\substack{q=1\\t_q=\overline{s}_p}}^j
\Delta\bigg(x^{2(q-p+s_p-N-1+i)}\bigg)
\overrightarrow{\Lambda}_{\Omega_{i+j}}\big(x^{\pm i}z_2\big),\qquad
1\leq i, j \leq N.
\label{eqn:fusion-T4}
\end{gather}
Here, we use ${:}\overrightarrow{\Lambda}_{\Omega_i^{(1)}}\big(x^{\pm (i+j)}z\big)
\overrightarrow{\Lambda}_{\Omega_j^{(2)}}(z){:}=
\overrightarrow{\Lambda}_{\Omega_{i+j}}\big(x^{\pm i}z\big)$.
Adding (\ref{eqn:fusion-T4}) over all $\Omega_i^{(1)}$ and
$\Omega_j^{(2)}$ yields (\ref{eqn:fusion-T1}).
\end{proof}

\begin{Lemma}
%\label{lemma:4-8}
The currents $T_i(z)$ satisfy the following fusion relations:\vspace{-1ex}
\begin{gather}
\lim_{z_1 \to x^{\pm (2N+1+i-j)}z_2}
\bigg(1-\frac{x^{\pm (2N+1+i-j)}z_2}{z_1}\bigg)f_{i,j} \bigg(\frac{z_2}{z_1}\bigg)T_i(z_1)T_j(z_2)\notag
\\ \qquad
{}=\mp c(x,r)\prod_{l=1}^{i-1}\Delta\big(x^{2l+1}\big)
\prod_{l=N+1-j}^{N+i-j}\Delta(x^{2l})T_{j-i}\big(x^{\pm i}z_2\big),\qquad
1\leq i \leq j \leq N,
\label{eqn:fusion-T2}
\\
\lim_{z_1 \to x^{\pm (2N+1-i+j)}z_2}
\bigg(1-\frac{x^{\pm (2N+1-i+j)}z_2}{z_1}\bigg)
f_{i,j}\bigg(\frac{z_2}{z_1}\bigg)T_i(z_1)T_j(z_2)\notag
\\
{}=\mp c(x,r)\prod_{l=1}^{j-1}\Delta\big(x^{2l+1}\big)\prod_{l=N+1-i}^{N+j-i}
\Delta\big(x^{2l}\big)T_{i-j}\big(x^{\pm (2N+1-i)}z_2\big),\qquad
1\leq j \leq i \leq N.
\label{eqn:fusion-T3}
\end{gather}
\end{Lemma}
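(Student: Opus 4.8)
The plan is to follow the strategy of the proof of \eqref{eqn:fusion-T1}: analyze the pole structure of a single pair of monomials and then sum over all configurations. Fix subsets $\Omega_i^{(1)}=\{s_1,\dots,s_i\}\subset J_N$ with $s_1\prec\cdots\prec s_i$ and $\Omega_j^{(2)}=\{t_1,\dots,t_j\}\subset J_N$ with $t_1\prec\cdots\prec t_j$. Using the exchange relations \eqref{eqn:Lambda}, I would first write $f_{i,j}(z_2/z_1)\overrightarrow{\Lambda}_{\Omega_i^{(1)}}(z_1)\overrightarrow{\Lambda}_{\Omega_j^{(2)}}(z_2)$ in normal-ordered form as a product of $\Delta$-factors times a normal-ordered exponential. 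These factors are of two kinds: the \emph{simple} factors $\Delta(x^{\pm1}z_2/z_1)$ produced by every non-conjugate crossing, and the \emph{conjugate} factors $\Delta(x^{-2N-2+2k}z_2/z_1)$, which appear exactly when a letter $k$ of one group meets its bar $\overline{k}$ in the other group.

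The essential difference from \eqref{eqn:fusion-T1} is the origin of the pole. Whereas the pole at $z_1=x^{\pm(i+j)}z_2$ there came from the leading simple factor, the deeper pole at $z_1=x^{\pm(2N+1+i-j)}z_2$ is supplied by the conjugate factors, whose shift $x^{-2N-2+2k}$ accounts for the $2N+1$ in the exponent. I would determine the necessary and sufficient condition for $f_{i,j}(z_2/z_1)\overrightarrow{\Lambda}_{\Omega_i^{(1)}}(z_1)\overrightarrow{\Lambda}_{\Omega_j^{(2)}}(z_2)$ to have a simple pole there: since $|\Omega_i^{(1)}|=i$, the whole first group must be annihilated, i.e.\ $\overline{\Omega_i^{(1)}}\subset\Omega_j^{(2)}$, and the conjugate factors must line up, in the limit, at the fusion distances of \eqref{eqn:fusion-Lambda2} and \eqref{eqn:fusion-Lambda3}; the survivors then form the $(j-i)$-element set $\Omega_{j-i}=\Omega_j^{(2)}\setminus\overline{\Omega_i^{(1)}}$.

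I would then compute the residue for each admissible configuration. At $z_1=x^{\pm(2N+1+i-j)}z_2$ the $i$ aligned conjugate pairs collapse through the telescoping identities \eqref{eqn:fusion-Lambda2} and \eqref{eqn:fusion-Lambda3}: each pair $\Lambda_k,\Lambda_{\overline{k}}$ either reduces its index by one or normal-orders to $1$, so the $i+j$ letters of $\Omega_i^{(1)}\cup\Omega_j^{(2)}$ collapse to the $j-i$ survivors, producing a single $\overrightarrow{\Lambda}_{\Omega_{j-i}}(x^{\pm i}z_2)$. Tracking the attendant $d_{\Omega}(x,r)$-coefficients and the leftover $\Delta$-factors, and summing over all admissible $(\Omega_i^{(1)},\Omega_j^{(2)})$, reassembles $T_{j-i}(x^{\pm i}z_2)$ with the structure constant $\prod_{l=1}^{i-1}\Delta(x^{2l+1})\prod_{l=N+1-j}^{N+i-j}\Delta(x^{2l})$, which is \eqref{eqn:fusion-T2}.

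The main obstacle I anticipate is the bookkeeping that isolates the simple pole: one must check that exactly one combination of conjugate factors supplies a first-order pole at the target location, that any spurious higher-order poles cancel, and---most delicately---that the $d_{\Omega}(x,r)$-coefficients recombine with the residual $\Delta$-factors to reproduce precisely the coefficients $d_{\Omega_{j-i}}(x,r)$ that define $T_{j-i}$. This is the same flavor of $\Delta$-identity manipulation carried out with the fusion relations \eqref{eqn:fusion-f1}--\eqref{eqn:fusion-f6}. Finally, for \eqref{eqn:fusion-T3} with $1\le j\le i\le N$ I would run the mirror-image argument in which the two groups are interchanged: now $\overline{\Omega_j^{(2)}}\subset\Omega_i^{(1)}$, the pole is supplied by the barred-first conjugate factors $\Delta(x^{2N+2-2k}z_2/z_1)$ of \eqref{eqn:Lambda}, the $j$ collapsing pairs leave an $(i-j)$-element survivor set inside the first group, and one obtains $T_{i-j}(x^{\pm(2N+1-i)}z_2)$ with prefactor $\prod_{l=1}^{j-1}\Delta(x^{2l+1})\prod_{l=N+1-i}^{N+j-i}\Delta(x^{2l})$.
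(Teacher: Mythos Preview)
Your direct free-field strategy could in principle work, but the paper takes a much shorter route: it simply combines the duality (\ref{prop:duality}), the structure-function symmetry $f_{i,j}=f_{i,2N+1-j}$ of (\ref{eqn:fusion-f3(2)}), and the already-established fusion (\ref{eqn:fusion-T1}). For (\ref{eqn:fusion-T2}) one rewrites $T_j(z_2)$ as a constant multiple of $T_{2N+1-j}(z_2)$ and replaces $f_{i,j}$ by $f_{i,2N+1-j}$; since $i+(2N+1-j)=2N+1+i-j$, the limit is then an instance of (\ref{eqn:fusion-T1}) (whose proof works verbatim whenever the two indices sum to at most $2N+1$), producing $T_{2N+1-(j-i)}$, which a second application of duality converts to $T_{j-i}$. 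The ratio of the two duality constants is exactly $\prod_{l=N+1-j}^{N+i-j}\Delta(x^{2l})$. Relation (\ref{eqn:fusion-T3}) follows the same way with the roles of $i$ and $j$ exchanged.

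What your direct computation would accomplish is, in effect, a re-derivation of the monomial-level duality identities (\ref{eqn:fusion-Lambda4}) and (\ref{eqn:d}) simultaneously with (\ref{eqn:fusion-T1}), all bundled into one residue calculation. The ``collapsing pairs'' you describe are exactly the mechanism behind $\overrightarrow{\Lambda}_{\overline{J_N\setminus A}}(z)=(\text{const})\cdot\overrightarrow{\Lambda}_A(z)$, and the $d_\Omega(x,r)$ recombination you flag as the main obstacle is precisely the content of (\ref{eqn:d}). Since both of these are already established, invoking them through (\ref{prop:duality}) bypasses all of that bookkeeping.
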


\begin{proof}
Using (\ref{prop:duality}), (\ref{eqn:fusion-f3(2)}), (\ref{eqn:fusion-f3(3)}),
and (\ref{eqn:fusion-T1}) yields (\ref{eqn:fusion-T2}) and (\ref{eqn:fusion-T3}).
\end{proof}

\begin{proof}
Here we will give a proof of Theorem \ref{thm:3-2}.
We prove Theorem~\ref{thm:3-2} by induction.
Lem\-ma~\ref{lemma:4-6} is the base for induction.
We define ${\rm LHS}_{i,j}$, ${\rm RHS1}_{i,j}$ and ${\rm RHS2}_{i,j}(k)$
with $1\leq k \leq i \leq j \leq N$ as
\begin{gather*}
{\rm LHS}_{i,j}=
f_{i,j}\bigg(\frac{z_2}{z_1}\bigg)T_i(z_1)T_j(z_2)
-f_{j,i}\bigg(\frac{z_1}{z_2}\bigg)T_j(z_2)T_i(z_1),
\\
{\rm RHS1}_{i,j}=c(x,r)\prod_{l=1}^{i-1}
\Delta\big(x^{2l+1}\big)\prod_{l=N+1-j}^{N+i-j}\Delta\big(x^{2l}\big)
\\ \hphantom{{\rm RHS1}_{i,j}=}
{}\times\bigg(\delta\bigg(\frac{x^{-2N+j-i-1}z_2}{z_1}\bigg)
T_{j-i}\big(x^{-i}z_2\big)-\delta
\bigg(\frac{x^{2N-j+i+1}z_2}{z_1}\bigg)T_{j-i}\big(x^{i}z_2\big)\bigg),
\\
{\rm RHS2}_{i,j}(k)=c(x,r)\prod_{l=1}^{k-1}\Delta\big(x^{2l+1}\big)
\bigg(\delta\bigg(\frac{x^{-j+i-2k}z_2}{z_1}\bigg)
f_{i-k,j+k}\big(x^{j-i}\big)T_{i-k}\big(x^kz_1\big)T_{j+k}\big(x^{-k}z_2\big)
\\ \hphantom{{\rm RHS2}_{i,j}(k)=}
{}-\delta\bigg(\frac{x^{j-i+2k}z_2}{z_1}\bigg)f_{i-k,j+k}\big(x^{-j+i}\big)
T_{i-k}(x^{-k}z_1)T_{j+k}\big(x^{k}z_2\big)\bigg),\quad
1\leq k \leq i-1,
\\
{\rm RHS2}_{i,j}(i)=c(x,r)\prod_{l=1}^{i-1}\Delta\big(x^{2l+1}\big)
\bigg(\delta\bigg(\frac{x^{-j-i}z_2}{z_1}\bigg)T_{j+i}\big(x^{-i}z_2\big)-
\delta\bigg(\frac{x^{j+i}z_2}{z_1}\bigg)T_{j+i}\big(x^{i}z_2\big)\bigg).
\end{gather*}
We prove the following relation by induction on
$i$, $1\leq i \leq j \leq N$.
\begin{gather}
{\rm LHS}_{i,j}={\rm RHS1}_{i,j}+\sum_{k=1}^i {\rm RHS2}_{i,j}(k).
\label{induction1}
\end{gather}
The base, $i=1 \leq j \leq N$ was proved previously in
Lemma \ref{lemma:4-6}.

We assume that the relation (\ref{induction1}) holds for
some $i$, $1\leq i<j\leq N$, and show that
${\rm LHS}_{i+1,j}={\rm RHS1}_{i+1,j}+\sum_{k=1}^{i+1} {\rm RHS2}_{i+1,j}(k)$ from this assumption.
First, we summarize some relations.
The assumption (\ref{induction1}) yields
\begin{gather}
\lim_{w_1 \to x^{2N-i-j}w_2}
\bigg(1-x^{2N-i-j}\frac{w_2}{w_1}\bigg)f_{j-1,i}\bigg(\frac{w_2}{w_1}\bigg)
T_{j-1}(w_1)T_i(w_2)=0,\label{induction2}
\\
\lim_{w_1 \to x^{2N-i-j}w_2}
\bigg(1-x^{2N-i-j}\frac{w_2}{w_1}\bigg)f_{1,j-i}\bigg(\frac{w_2}{w_1}\bigg)
T_{1}(w_1)T_{j-i}(w_2)=0,\label{induction3}
\\
f_{i,j}\bigg(\frac{w_2}{w_1}\bigg)T_i(w_1)T_j(w_2)=
f_{j,i}\bigg(\frac{w_1}{w_2}\bigg)T_j(w_2)T_i(w_1)
\label{induction4}
\end{gather}
for $\frac{w_2}{w_1}\neq x^{\pm (2N-j+i+1)}$, $x^{\pm(j-i+2k)}$, $1\leq k \leq i$.
Direct calculation yields
\begin{gather}
\lim_{w_2 \to x^{-i-1}w_1}
\bigg(1-x^{-i-1}\frac{w_1}{w_2}\bigg)\Delta\bigg(x^{-i}\frac{w_1}{w_2}\bigg)=c(x,r).
\label{induction5}
\end{gather}

Multiplying ${\rm LHS}_{i,j}$ by
$f_{1,i}(z_1/z_3)f_{1,j}(z_2/z_3)T_1(z_3)$
on the left and using the quadratic rela\-tion~(\ref{induction1}) with $i=1$,
along with the fusion relation (\ref{eqn:fusion-f4}) yields
\begin{gather}
f_{1,i}\bigg(\frac{z_1}{z_3}\bigg)
f_{1,j}\bigg(\frac{z_2}{z_3}\bigg)T_1(z_3)
\times {\rm LHS}_{i, j}\notag
\\ \qquad
{}=f_{1,j}\bigg(\frac{z_2}{z_3}\bigg)
f_{i,j}\bigg(\frac{z_2}{z_1}\bigg)
f_{1,i}\bigg(\frac{z_1}{z_3}\bigg)T_1(z_3)T_i(z_1)T_j(z_2)\notag
\\ \qquad\hphantom{=}
{}-f_{j,1}\bigg(\frac{z_3}{z_2}\bigg)
f_{j,i}\bigg(\frac{z_1}{z_2}\bigg)T_j(z_2)
f_{1,i}\bigg(\frac{z_1}{z_3}\bigg)T_1(z_3)T_i(z_1)\notag
\\ \qquad\hphantom{=}
{}-c(x,r)\Delta\big(x^{2(N+1-j)}\big)
\delta\bigg(\frac{x^{-2N+j-2}z_2}{z_3}\bigg)
\Delta\bigg(\frac{x^{-i}z_1}{z_3}\bigg)
f_{j-1,i}\bigg(\frac{x^{-2N+j-1}z_1}{z_3}\bigg)\notag
\\ \qquad\hphantom{=-}
{}\times T_{j-1}\big(x^{2N-j+1}z_3\big)T_i(z_1)
+c(x,r)\Delta\big(x^{2(N+1-j)}\big)\notag
\\ \qquad\hphantom{=-}
{}\times\delta\bigg(\frac{x^{2N-j+2}z_2}{z_3}\bigg)
\Delta\bigg(\frac{x^{i}z_1}{z_3}\bigg)
f_{j-1,i}\bigg(\frac{x^{2N-j+1}z_1}{z_3}\bigg)
T_{j-1}\big(x^{-2N+j-1}z_3\big)T_i(z_1)\notag
\\ \qquad\hphantom{=}
{}-c(x,r)\delta\bigg(\frac{x^{-j-1}z_2}{z_3}\bigg)
\Delta\bigg(\frac{x^{-i}z_1}{z_3}\bigg)f_{j+1,i} \bigg(\frac{x^{-j}z_1}{z_3}\bigg)T_{j+1}\big(x^jz_3\big)T_i(z_1)\notag
\\ \qquad\hphantom{=}
{}+c(x,r)\delta\bigg(\frac{x^{j+1}z_2}{z_3}\bigg)
\Delta\bigg(\frac{x^{i}z_1}{z_3}\bigg)
f_{j+1,i}\bigg(\frac{x^{j}z_1}{z_3}\bigg)T_{j+1}\big(x^{-j}z_3\big)T_i(z_1).
\label{induction6}
\end{gather}
Taking the limit $z_3\to x^{-i-1}z_1$ of (\ref{induction6}) multiplied by
$c(x,r)^{-1} \big(1-x^{-i-1}z_1/z_3\big)$ and using the relations
(\ref{eqn:fusion-T1}) and (\ref{eqn:fusion-T3}),
(\ref{induction2}), (\ref{induction4}), and (\ref{induction5}) yields
\begin{gather}
\lim_{z_3 \to x^{-i-1}z_1}\frac{1}{c(x,r)}\bigg(1-x^{-i-1}\frac{z_1}{z_3}\bigg)
f_{1,i}\bigg(\frac{z_1}{z_3}\bigg)f_{1,j}\bigg(\frac{z_2}{z_3}\bigg)T_1(z_3)
\times {\rm LHS}_{i, j}\notag
\\ \qquad
{}=f_{i+1,j}\bigg(\frac{xz_2}{z_1}\bigg)T_{i+1}\big(x^{-1}z_1\big)T_j(z_2)-
f_{j,i+1}\bigg(\frac{x^{-1}z_1}{z_2}\bigg)T_j(z_2)T_{i+1}\big(x^{-1}z_1\big)\notag
\\ \qquad\hphantom{=}
{}+c(x,r) \prod_{l=1}^i \Delta\big(x^{2l+1}\big)\prod_{l=N-j+1}^{N+i+1-j}
\Delta\big(x^{2l}\big)\delta\bigg(\frac{x^{2N-j+i+3}z_2}{z_1}\bigg)T_{j-i-1}\big(x^{i+1}z_2\big)\notag
\\ \qquad\hphantom{=}
{}-c(x,r)\delta\bigg(\frac{x^{i-j}z_2}{z_1}\bigg)
f_{i,j+1}\big(x^{-i+j+1}\big)T_i(z_1)T_{j+1}\big(x^{-1}z_2\big)\notag
\\ \qquad\hphantom{=}
{}+c(x,r)\delta\bigg(\frac{x^{i+j+2}z_2}{z_1}\bigg)\prod_{l=1}^{i}
\Delta\big(x^{2l+1}\big)T_{i+j+1}\big(x^{i+1}z_2\big).
\label{induction7}
\end{gather}

{\sloppy Multiplying ${\rm RHS1}_{i,j}$ by $f_{1,i}(z_1/z_3)f_{1,j}(z_2/z_3)T_1(z_3)$ from the left and using fusion re\-la\-tions~(\ref{eqn:fusion-f3(3)}) and (\ref{eqn:fusion-f4}) yields
\begin{gather}
f_{1,i}\bigg(\frac{z_1}{z_3}\bigg)f_{1,j}\bigg(\frac{z_2}{z_3}\bigg)T_1(z_3)
\times {\rm RHS1}_{i, j}=
c(x,r)\prod_{l=1}^{i-1}\Delta(x^{2l+1})\prod_{l=N+1-j}^{N+i-j}\Delta\big(x^{2l}\big)\notag
\\ \qquad
{}\times
\bigg\{\delta\bigg(\frac{x^{-2N+j-i-1}z_2}{z_1}\bigg)
\Delta\bigg(\frac{x^i z_1}{z_3}\bigg)f_{1,j-i}\bigg(\frac{x^{-i}z_2}{z_3}\bigg) T_1(z_3)T_{j-i}\big(x^{-i}z_2\big)\notag
\\ \qquad
{}-\delta\bigg(\frac{x^{2N-j+i+1}z_2}{z_1}\bigg)
\Delta\bigg(\frac{x^{-i} z_1}{z_3}\bigg)
f_{1,j-i}\bigg(\frac{x^{i}z_2}{z_3}\bigg)T_1(z_3)T_{j-i}\big(x^{i}z_2\big)\bigg\}.
\label{induction8}
\end{gather}}%
Taking the limit $z_3\to x^{-i-1}z_1$ of (\ref{induction8}) multiplied by $c(x,r)^{-1} (1-x^{-i-1}z_1/z_3)$ and using the relations
(\ref{eqn:fusion-T2}) and (\ref{induction3}) yields
\begin{gather}
\lim_{z_3 \to x^{-i-1}z_1}\frac{1}{c(x,r)}\bigg(1-x^{-i-1}\frac{z_1}{z_3}\bigg)
f_{1,i}\bigg(\frac{z_1}{z_3}\bigg)
f_{1,j}\bigg(\frac{z_2}{z_3}\bigg)T_1(z_3)\times {\rm RHS1}_{i, j}\notag
\\ \qquad
{}=c(x,r)\prod_{l=1}^i \Delta\big(x^{2l+1}\big)
\prod_{l=N+1-j}^{N+i+1-j}\Delta\big(x^{2l}\big)
\delta\bigg(\frac{x^{-2N+j-i-1}z_2}{z_1}\bigg)T_{j-i-1}\big(x^{-i-1}z_2\big).
\label{induction9}
\end{gather}

Multiplying ${\rm RHS2}_{i,j}(i)$ by $f_{1,i}(z_1/z_3)f_{1,j}(z_2/z_3)T_1(z_3)$ from the left and using the fusion relation (\ref{eqn:fusion-f5}) yields
\begin{gather}
f_{1,i}\bigg(\frac{z_1}{z_3}\bigg)f_{1,j}\bigg(\frac{z_2}{z_3}\bigg)T_1(z_3)
\times {\rm RHS2}_{i, j}(i)\notag
\\ \qquad
{}=c(x,r)\prod_{l=1}^{i-1}\Delta_1(x^{2l+1})\bigg(\delta\bigg(\frac{x^{-i-j}z_2}{z_1}\bigg)
f_{1,i+1}\bigg(\frac{x^jz_1}{z_3}\bigg)\Delta\bigg(\frac{x^i z_1}{z_3}\bigg)T_1(z_3)T_{i+j}\big(x^jz_1\big)
\notag
\\ \qquad\hphantom{=}
{}-\delta\bigg(\frac{x^{i+j}z_2}{z_1}\bigg)f_{1,i+1}\bigg(\frac{x^{-j}z_1}{z_3}\bigg)
\Delta\bigg(\frac{x^{-i} z_1}{z_3}\bigg)T_1(z_3)T_{i+j}\big(x^{-j}z_1\big)\bigg).
\label{induction10}
\end{gather}
Taking the limit $z_3\to x^{-i-1}z_1$ of (\ref{induction10})
multiplied by $c(x,r)^{-1}\big(1-x^{-i-1}z_1/z_3\big)$
and using the relations (\ref{eqn:fusion-T1}) and (\ref{induction5}) yields
\begin{gather}
\lim_{z_3 \to x^{-i-1}z_1}\frac{1}{c(x,r)}\bigg(1-x^{-i-1}\frac{z_1}{z_3}\bigg)
f_{1,i}\bigg(\frac{z_1}{z_3}\bigg)f_{1,j}\bigg(\frac{z_2}{z_3}\bigg)T_1(z_3)
\times {\rm RHS2}_{i, j}(i)\notag
\\ \qquad
{}=c(x,r)\delta\bigg(\frac{x^{-i-j}z_2}{z_1}\bigg)\prod_{l=1}^i \Delta\big(x^{2l+1}\big)T_{i+j+1}\big(x^{-i-1}z_2\big)\notag
\\ \qquad \hphantom{=}
{}-c(x,r)\delta\bigg(\frac{x^{i+j}z_2}{z_1}\bigg)
\prod_{l=1}^{i-1} \Delta\big(x^{2l+1}\big)f_{1,i+j}\big(x^{i-j+1}\big)T_1\big(x^{-i-1}z_1\big)T_{i+j}\big(x^{i}z_2\big).
\label{induction11}
\end{gather}
Multiplying ${\rm RHS2}_{i,j}(k)$, $1\leq k \leq i-1$, by
$f_{1,i}(z_1/z_3)f_{1,j}(z_2/z_3)T_1(z_3)$
from the left and using relations (\ref{eqn:fusion-f6})
and (\ref{induction4}) yields
\begin{gather}
f_{1,i}\bigg(\frac{z_1}{z_3}\bigg)
f_{1,j}\bigg(\frac{z_2}{z_3}\bigg)T_1(z_3)
\times {\rm RHS2}_{i, j}(k)\label{induction12}
\\ \qquad
{}=c(x,r)\prod_{l=1}^{k-1}\Delta\big(x^{2l+1}\big)
\bigg(\delta\bigg(\frac{x^{-j+i-2k}z_2}{z_1}\bigg)
f_{1,i-k}\bigg(\frac{x^kz_1}{z_3}\bigg)f_{j+k,i-k}\big(x^{i-j}\big)f_{1,j+k} \notag
\\ \qquad \hphantom{=}
{}\times \bigg(\frac{x^{-i+j+k}z_1}{z_3}\bigg)
T_1(z_3)T_{j+k}\big(x^{j-i+k}z_1\big)T_{i-k}\big(x^kz_1\big)
-\delta\bigg(\frac{x^{j-i+2k}z_2}{z_1}\bigg)
f_{1,i-k}\bigg(\frac{x^{-k}z_1}{z_3}\bigg)\notag
\\ \qquad \hphantom{=}
{}\times f_{i-k,j+k}\big(x^{i-j}\big)
f_{1,j+k}\bigg(\frac{x^{i-j-k}z_1}{z_3}\bigg)
T_1(z_3)T_{i-k}\big(x^{-k}z_1\big)T_{j+k}\big(x^kz_2\big)\bigg), \!\!\quad
1\leq k \leq i-1.\notag
\end{gather}
Taking the limit $z_3\to x^{-i-1}z_1$ of (\ref{induction12}) multiplied by $c(x,r)^{-1}\big(1-x^{-i-1}z_1/z_3\big)$, and using the fusion relations (\ref{eqn:fusion-f3}), (\ref{eqn:fusion-T1}), and (\ref{induction4}) yields
\begin{gather}
\lim_{z_3 \to x^{-i-1}z_1}\frac{1}{c(x,r)}\bigg(1-x^{-i-1}\frac{z_1}{z_3}\bigg)
f_{1,i}\bigg(\frac{z_1}{z_3}\bigg)
f_{1,j}\bigg(\frac{z_2}{z_3}\bigg)T_1(z_3)
\times {\rm RHS2}_{i, j}(k)\notag
\\ \qquad
{}=c(x,r)\prod_{l=1}^k \Delta\big(x^{2l+1}\big)
\delta\bigg(\frac{x^{-j+i-2k}z_2}{z_1}\bigg)
f_{j+k-1,i-k}\big(x^{i-j+1}\big)T_{i-k}(x^kz_1)T_{j+k+1}\big(x^{-k-1}z_2\big)\nonumber
\\ \qquad\hphantom{=}
{}-c(x,r)\prod_{l=1}^{k-1}\Delta\big(x^{2l+1}\big)
\delta\bigg(\frac{x^{j-i+2k}z_2}{z_1}\bigg)
f_{i-k+1,j+k}\big(x^{i-j+1}\big)T_{i-k+1}\big(x^{-k-1}z_1\big)\notag
\\ \qquad\hphantom{=+}
{}\times T_{j+k}\big(x^kz_2\big),\qquad 1\leq k \leq i-1.
\label{induction13}
\end{gather}
Adding (\ref{induction7}), (\ref{induction9}), (\ref{induction11}),
and (\ref{induction13}) for $1\leq k \leq i-1$, and replacing
$z_1$ by $xz_1$ yields ${\rm LHS}_{i+1,j}={\rm RHS1}_{i+1,j}+
\sum_{k=1}^{i+1}{\rm RHS2}_{i+1,j}(k)$.
By induction on $i$, we proved quadratic relation~(\ref{thm:quadratic}).
\end{proof}

\subsection{Proof of Lemma \ref{lem:3-4}}

\begin{Lemma}
\label{lemma:4-9}
The current $T_1(z)$ commutes with
the screening currents $S_k(w)$ as follows.
\begin{gather}
[T_1(z), S_k(w)]=C_k(z)
(D_{x^{r}}\delta)\bigg(\frac{x^{k}w}{z}\bigg)+\overline{C}_k(z)
(D_{x^{r}}\delta)\bigg(\frac{x^{2N+1-k}w}{z}\bigg),\quad\
1\leq k \leq N.
\label{eqn:commute-TS}
\end{gather}
Here we set $q$-difference
\begin{gather*}
(D_q\delta)(z)=\delta(qz)-\delta\big(q^{-1}z\big),
\end{gather*}
the currents $C_k(z)$ and $\overline{C}_k(z)$,
$1\leq k \leq N$, are given by
\begin{gather*}
C_k(z)=x^{-r+1}\big(x^{r-1}-x^{-r+1}\big){:}\Lambda_k(z)S_k\big(x^{r-k}z\big){:},
\\
\overline{C}_k(z)=x^{r-1}\big(x^{r-1}-x^{-r+1}\big)
{:}\Lambda_{\overline{k}}(z)S_k\big(x^{-2N{-}1+k-r}z\big){:}.
\end{gather*}
\end{Lemma}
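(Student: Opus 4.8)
The plan is to reduce everything to commutators of individual vertex operators and then localise. Writing $[T_1(z),S_k(w)]=\sum_{i\in J_N}[\Lambda_i(z),S_k(w)]$, the first observation is that $S_k(w)$ is built only from the boson $a_k$, so a factor of $\Lambda_i(z)$ contributes to the operator product only when its boson fails to commute with $a_k$; since $B_{j,k}(m)=0$ for $|j-k|\ge 2$ and $[y_1(m),a_k(n)]\propto\delta_{1,k}$, only the factors $A_{k-1}$, $A_k$, $A_{k+1}$ (and $Y_1$ when $k=1$) matter. First I would record the elementary contractions obtained from the Heisenberg relations and Appendix~\ref{appendix:normal}, namely
\[
A_{k\pm1}(\zeta)^{-1}S_k(w)=\frac{1-x^{r-1}w/\zeta}{1-x^{-r+1}w/\zeta}\,{:}A_{k\pm1}(\zeta)^{-1}S_k(w){:},
\]
together with the diagonal factor for $A_k(\zeta)^{-1}S_k(w)$, which carries an extra $(1-x^{r-1}w/\zeta)/(1-x^{-r+1}w/\zeta)$ when $k=N$ because of the modified entry $B_{N,N}(m)$, and their opposite orderings.

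Multiplying these contractions along the explicit product $\Lambda_i(z)={:}Y_1(z)\prod_j A_j(\,\cdot\,z)^{-1}{:}$, neighbouring zeros and poles cancel: whenever $A_k$ sits inside $\Lambda_i$ flanked by a matching neighbour $A_{k\pm1}$, the product collapses to $1$ and $[\Lambda_i(z),S_k(w)]=0$. Hence only the operators at the two ends of each $A_k$-block survive. In the forward block these are $\Lambda_k(z)$ (carrying $A_{k-1}$ but not $A_k$), whose product has a simple pole at $w/z=x^{r-k}$, and $\Lambda_{k+1}(z)={:}\Lambda_k(z)A_k(x^{-k}z)^{-1}{:}$, with a pole at $w/z=x^{-r-k}$; in the barred block they are $\Lambda_{\overline{k+1}}(z)$, with a pole at $w/z=x^{-2N-1+k+r}$, and $\Lambda_{\bar k}(z)={:}\Lambda_{\overline{k+1}}(z)A_k(x^{-2N+k-1}z)^{-1}{:}$, with a pole at $w/z=x^{-2N-1+k-r}$. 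For each surviving operator the two orderings are, after including the zero-mode constants, expansions of one and the same rational function, so the commutator is its residue times a formal $\delta$, and the four pole positions are exactly the arguments occurring in $(D_{x^{r}}\delta)(x^{k}w/z)$ and $(D_{x^{r}}\delta)(x^{2N+1-k}w/z)$.

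It remains to see that within each block the two residues combine into a single coefficient $C_k(z)$, respectively $\overline{C}_k(z)$. The device is the collapse of $A_k$ onto $S_k$: a direct mode computation shows that the nonzero modes of ${:}A_k(\zeta)^{-1}S_k(x^{-r}\zeta){:}$ are precisely those of $S_k(x^{r}\zeta)$, since $-\zeta^{-m}+x^{rm}\zeta^{-m}/(x^{rm}-x^{-rm})=x^{-rm}\zeta^{-m}/(x^{rm}-x^{-rm})$. Applying this with $\zeta=x^{-k}z$ rewrites the localisation ${:}\Lambda_{k+1}(z)S_k(x^{-r-k}z){:}$ as a multiple of ${:}\Lambda_k(z)S_k(x^{r-k}z){:}$, and with $\zeta=x^{-2N+k-1}z$ it relates ${:}\Lambda_{\bar k}(z)S_k(x^{-2N-1+k-r}z){:}$ to ${:}\Lambda_{\overline{k+1}}(z)S_k(x^{-2N-1+k+r}z){:}$. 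Computing the residue $1-x^{2-2r}=x^{-r+1}(x^{r-1}-x^{-r+1})$ then reproduces exactly the prefactor of $C_k(z)$, and tracking the opposite shift in the barred block gives the $x^{r-1}(x^{r-1}-x^{-r+1})$ of $\overline{C}_k(z)$. Summing over $i\in J_N$ and collecting the four localised contributions yields the claimed formula.

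The step I expect to be the main obstacle is the zero-mode bookkeeping that turns the collapse into an honest constant. The factors $x^{ra_k(0)}$ of $A_k$ and $w^{\frac{r-1}{2r}B_{k,k}(0)}w^{-\sqrt{(r-1)/r}a_k(0)}$ of $S_k$ contribute $a_k(0)$-dependent prefactors, and it is only on the admissible Fock spaces $\pi_\mu$—together with the fine-tuning $z^{\frac{r-1}{2r}B_{i,i}(0)}$ of $S_k$ noted after the definition~(\ref{def:S})—that these combine to the scalars $x^{\mp(r-1)}$ appearing in $C_k(z)$ and $\overline{C}_k(z)$; getting these normalisations and signs right uniformly in $k$ is the fiddly part. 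The remaining separate case is $k=N$, where the modified diagonal $B_{N,N}(m)$, the constant $[r-\tfrac12]_x/[\tfrac12]_x$ entering through $\Lambda_0$, and the two adjacent copies of $A_N$ at $x^{-N}z$ and $x^{-N-1}z$ make the generic cancellation pattern degenerate, so the poles and residues there must be recomputed by hand; the case $k=1$ is easier and only requires replacing the left end $A_{k-1}$ of the forward block by $Y_1$.
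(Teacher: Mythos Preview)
Your approach is essentially the paper's own proof. The paper records exactly the commutators $[\Lambda_i(z),S_k(w)]$ you single out (for $i\in\{k,k+1,\overline{k+1},\bar k\}$, plus $\Lambda_0$ when $k=N$) in the appendix on exchange relations, states that all others vanish, and then uses precisely your collapse identities---written there as the four relations $x^{-r+1}{:}\Lambda_k(z)S_k(x^{r-k}z){:}=x^{r-1}{:}\Lambda_{k+1}(z)S_k(x^{-r-k}z){:}$ and its barred and $k=N$ analogues---to pair the four $\delta$'s into two $q$-differences.

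One small correction to your anticipated obstacle: the collapse identities hold as operator identities in the Heisenberg algebra, not merely on the restricted Fock spaces $\pi_\mu$. Your nonzero-mode computation is exactly right; the zero-mode part only involves matching the prefactors $x^{-ra_k(0)}$ from $A_k^{-1}$ against the shift $w\mapsto x^{\pm 2r}w$ in $w^{-\sqrt{(r-1)/r}\,a_k(0)}$ and $w^{\frac{r-1}{2r}B_{k,k}(0)}$, which produces the scalar $x^{\mp(r-1)}$ uniformly without any Fock-space hypothesis. The $\pi_\mu$ restriction is only needed later to make the contour integral defining $S_k$ single-valued, not for this lemma.
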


\begin{proof}
Adding (\ref{exchange:Lambda-S}) yields
\begin{align*}
[T_1(z),S_k(w)]={}&\big(x^{r-1}-x^{-r+1}\big)
\bigg({-}x^{-r+1}{:}\Lambda_k(z)S_k(w){:}\delta\bigg(x^{k-r}\frac{w}{z}\bigg)
\\
&+x^{r-1}{:}\Lambda_{k+1}(z)S_k(w){}\delta\bigg(x^{k+r}\frac{w}{z}\bigg)
+x^{r-1}{:}\Lambda_{\overline{k}}(z)S_k(w){}\delta\bigg(x^{2N+1-k+r}\frac{w}{z}\bigg)
\\
&-x^{-r+1}{:}\Lambda_{\overline{k+1}}(z)S_k(w){:}
\delta\bigg(x^{2N+1-k-r}\frac{w}{z}\bigg)\bigg),
\qquad
1\leq k \leq N-1,
\end{align*}
and
\begin{align*}
[T_1(z),S_N(w)]={}&\big(x^{-r+1}-x^{r-1}\big)
\bigg(x^{-r+1}{:}\Lambda_N(z)S_N(w){:}\delta\bigg(x^{N-r}\frac{w}{z}\bigg)
\\
&-x^{r-1}{:}\Lambda_N(z)S_{\overline{N}}(w){:}
\delta\bigg(x^{N+1+r}\frac{w}{z}\bigg)\bigg)
\\
&+\frac{[r-1]_x[\frac{1}{2}]_x}{[r-\frac{1}{2}]_x}\big(x-x^{-1}\big)
\bigg(\delta\bigg(x^{N+r}\frac{w}{z}\bigg)-\delta\bigg(x^{N+1-r}\frac{w}{z}
\bigg)\bigg){:}\Lambda_0(z)S_N(w){:}.
\end{align*}
Using the relations
\begin{gather*}
x^{-r+1}{:}\Lambda_k(z)S_k\big(x^{r-k}z\big){:}
=x^{r-1}{:}\Lambda_{k+1}(z)S_k\big(x^{-r-k}z\big){:},\qquad
1\leq k \leq N-1,
\\
x^{-r+1}{:}\Lambda_N(z)S_N\big(x^{r-N}z\big){:}
=\frac{\big[\frac{1}{2}\big]_x}{\big[r-\frac{1}{2}\big]_x}
{:}\Lambda_{0}(z)S_N\big(x^{-r-N}z\big){:},
\\
x^{r-1}{:}\Lambda_{\overline{k}}(z)S_k\big(x^{-2N{-}1+k-r}z\big){:}
=x^{-r+1}{:}\Lambda_{\overline{k+1}}(z)
S_k\big(x^{-2N{-}1+k+r}z\big){:},\qquad 1\leq k \leq N-1,
\\
x^{r-1}{:}\Lambda_{\overline{N}}(z)S_N\big(x^{-r-N-1}z\big){:}
=\frac{\big[\frac{1}{2}\big]_x}{\big[r-\frac{1}{2}\big]_x}
{:}\Lambda_{0}(z)S_N\big(x^{r-N-1}z\big){:}.
\end{gather*}
yields (\ref{eqn:commute-TS}).
\end{proof}

\begin{Corollary}
%\label{cor:4-10}
The current $T_1(z)$ commutes with the screening operators $S_k$
\begin{gather}
[T_1(z), S_k]=0,\qquad 1\leq k \leq N.
\label{eqn:screening2}
\end{gather}
\end{Corollary}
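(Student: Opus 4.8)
The plan is to derive (\ref{eqn:screening2}) directly from Lemma~\ref{lemma:4-9} by integrating the operator identity (\ref{eqn:commute-TS}) over the screening variable~$w$. Since $S_k=\oint\frac{{\rm d}w}{2\pi\sqrt{-1}w}S_k(w)$ is a formal residue and $T_1(z)$ does not involve~$w$, I would first move the commutator inside the residue and substitute (\ref{eqn:commute-TS}):
\begin{gather*}
[T_1(z),S_k]=\oint\frac{{\rm d}w}{2\pi\sqrt{-1}w}[T_1(z),S_k(w)]
\\
=\oint\frac{{\rm d}w}{2\pi\sqrt{-1}w}\bigg(C_k(z)(D_{x^r}\delta)\bigg(\frac{x^kw}{z}\bigg)+\overline{C}_k(z)(D_{x^r}\delta)\bigg(\frac{x^{2N+1-k}w}{z}\bigg)\bigg),\qquad 1\leq k\leq N.
\end{gather*}

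The crucial observation is that the coefficients $C_k(z)$ and $\overline{C}_k(z)$ produced by Lemma~\ref{lemma:4-9} are independent of~$w$: they are the normal-ordered expressions ${:}\Lambda_k(z)S_k(x^{r-k}z){:}$ and ${:}\Lambda_{\overline{k}}(z)S_k(x^{-2N-1+k-r}z){:}$ obtained \emph{after} the delta functions have already fixed $w$ in terms of~$z$. Hence both factors pull out of the residue, reducing the problem to the scalar residues $\oint\frac{{\rm d}w}{2\pi\sqrt{-1}w}(D_{x^r}\delta)(cw/z)$ with $c=x^k$ and $c=x^{2N+1-k}$.

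I would then evaluate these residues from $(D_{x^r}\delta)(cw/z)=\delta(x^rcw/z)-\delta(x^{-r}cw/z)$ together with the elementary fact that $\oint\frac{{\rm d}w}{2\pi\sqrt{-1}w}\delta(aw/z)=1$ for every nonzero~$a$: since $\frac{1}{w}\delta(aw/z)=\sum_{m\in\mathbb{Z}}a^mz^{-m}w^{m-1}$, only the $m=0$ term contributes to the residue. Consequently the residue of each total $x^r$-difference equals $1-1=0$, both terms vanish, and we obtain $[T_1(z),S_k]=0$ for $1\leq k\leq N$.

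Essentially all of the genuine work has already been done in Lemma~\ref{lemma:4-9}, where $[T_1(z),S_k(w)]$ was shown to be a pure total $q$-difference in the delta function; the present statement is simply the formal counterpart of the fact that a total difference integrates to zero, so I do not expect a substantial obstacle here. The only point that needs care is the legitimacy of the residue: on the Fock space~$\pi_\mu$ the power of~$w$ appearing in $S_k(w)$ is integral, so the coefficient extraction $\oint\frac{{\rm d}w}{2\pi\sqrt{-1}w}$ is well defined and the termwise integration above is justified.
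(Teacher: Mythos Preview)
Your proposal is correct and follows essentially the same approach as the paper: integrate the identity of Lemma~\ref{lemma:4-9} over $w$, pull out the $w$-independent coefficients $C_k(z)$ and $\overline{C}_k(z)$, and use that $\oint\frac{{\rm d}w}{2\pi\sqrt{-1}w}(D_{x^r}\delta)(x^sw/z)=0$. You have supplied a bit more detail on why the residue of each delta function equals~$1$, but the argument is the same.
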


\begin{proof}
From (\ref{eqn:commute-TS}), we obtain
\begin{gather*}
[T_1(z), S_k]=\oint \frac{{\rm d}w}{2\pi \sqrt{-1}w}
\bigg(C_k(z)(D_{x^{r}}\delta)\bigg(\frac{x^{k}w}{z}\bigg)+\overline{C}_k(z)
(D_{x^{r}}\delta)\bigg(\frac{x^{2N+1-k}w}{z}\bigg)\bigg).
\end{gather*}
Using
$\oint \frac{{\rm d}w}{2\pi \sqrt{-1}w}
(D_{x^{r}}\delta)\big(\frac{x^{s}w}{z}\big)=0$ with $s=k$, $2N+1-k$
yields $[T_1(z), S_k]=0$.
\end{proof}

\begin{proof}
Here we will give a proof of Lemma \ref{lem:3-4}.
Set $T_j(z)=\sum_{m \in \mathbb{Z}}T_j[m]z^{-m}$,
$1\leq j \leq 2N$ and $f_{i,j}(z)=\sum_{l=0}^\infty f_{i,j}^lz^l$.
From (\ref{eqn:quadratic2}), we obtain
\begin{gather*}
\big(x^{-(j+1)k+m}-x^{(j+1)k-m}\big)T_{j+1}[m]
\\ \qquad
{}=\Delta\big(x^{2N+2-2j}\big)\big(x^{(2N-j+2)k-m}-x^{(-2N+j-2)k+m}\big)T_{j-1}[m]
\\ \qquad\phantom{=}
{}+c(x,r)^{-1}\sum_{l=0}^\infty
\big(f_{1,j}^l T_1[k-l]T_j[l-k+m]-f_{j,1}^l
T_j[k-l-m]T_1[l-k]\big),
\\
m, k\in \mathbb{Z},\qquad
1\leq j \leq N.
\end{gather*}
Hence, $T_{j+1}[m]$, $m \in \mathbb{Z}, 1\leq j \leq N$,
are expressed in terms of $T_{j}[n], T_{j-1}[n]$,
and $T_{1}[n]$, $n \in \mathbb{Z}$, $1\leq j \leq N$.
From duality (\ref{prop:duality}),
$T_j[m]$, $m \in \mathbb{Z}$, $N+2 \leq j \leq 2N$
are expressed in terms of $T_{2N+1-j}[n]$, $n \in \mathbb{Z}$,
$N+2\leq j \leq 2N$.
Finally, $T_{j}[m]$, $m \in \mathbb{Z}$, $1\leq j \leq 2N$
are expressed in terms of $T_1[n]$, $n \in \mathbb{Z}$.
Hence, we obtain (\ref{eqn:screening}) from (\ref{eqn:screening2}).
\end{proof}

\section{Conclusion and discussion}
\label{section:5}

In this paper, we obtained the free field construction of
higher $W$-currents $T_i(z)$, $i \geq 2$,
of the deformed $W$-algebra ${\mathcal W}_{x,r}\big(A_{2N}^{(2)}\big)$.
We obtained a closed set of quadratic relations for the $W$-currents $T_i(z)$,
which are completely different from those in types $A_N^{(1)}$ and $A(M,N)^{(1)}$.
The quadratic relations of ${\mathcal W}_{x,r}\big(A_{2N}^{(2)}\big)$
do not preserve ``parity'', though those of ${\mathcal W}_{x,r}\big(A_N^{(1)}\big)$
and ${\mathcal W}_{x,r}\big(A(M,N)^{(1)}\big)$ do. Here we define
``parity'' of $T_i(z)T_j(w)$ as $i+j$.
We obtained the duality $T_{2N+1-i}(z)=c_i T_{i}(z)$, $1 \leq i \leq N$, which
is a new structure that does not occur in
types~$A_2^{(2)}$,~$A_N^{(1)}$, and~$A(M,N)^{(1)}$.
This allowed us to define the deformed $W$-algebra
${\mathcal W}_{x,r}\big(A_{2N}^{(2)}\big)$ using generators and relations
similarly to the definition of the twisted affine {Lie} algebra
of type $A_{2N}^{(2)}$ given in Section \ref{section:2}.

We also justified our definition of
the deformed $W$-algebra of type $A_{2N}^{(2)}$.
We compare Definition \ref{def:3-3} with other definitions.
In~\cite{Frenkel-Reshetikhin1},
the deformed $W$-algebras of type{s}
$A_N^{(1)}$, $B_N^{(1)}$, $C_N^{(1)}$, $D_N^{(1)}$, and $A_{2N}^{(2)}$
were proposed as the intersection of the kernels of the screening operators.
We recall the definition based on the screening operators for
$A_{2N}^{(2)}$.
Let {{${\mathbf H}_{x,r}$}}
be the vector space spanned by
the formal power series currents of the form
\begin{gather*}
{:}\partial_z^{n_1} Y_{i_1}\big(x^{r j_1+k_1}{z}\big)^{\varepsilon_1}\cdots
\partial_z^{n_l} Y_{i_l}\big(x^{r j_l+k_l}{z}\big)^{\varepsilon_l}{:},
\end{gather*}
where $\varepsilon_i=\pm 1$
\footnote[2]{We define $Y_i(z)^{-1}$ as the inverse element of $Y_i(z)$, that is,
$Y_i(z) Y_i(z)^{-1}=Y_i(z)^{-1}Y_i(z)=1$. Specifically, we obtain
$Y_i(z)^{-1}=x^{-ry_i(0)}\langle Y_i(z)Y_i(z)\rangle
{:}\exp\bigl({-}\sum_{m \neq 0}y_i(m)z^{-m}\bigr){:}$,
where we used the symbol $\langle~\rangle$ defined in~(\ref{def:coupling}).}.
We define ${\mathbf W}_{x,r}$ as the vector subspace of~${\bf H}_{x,r}$
consisting of all currents that commute with the screening operators
$S_i$, $1\!\leq \!i \leq\! N$, in (\ref{def:screening}).
Let $\bigl\{F_a(z)\!=\!\sum_{m \in \mathbb{Z}}F_a[m]z^{-m}\bigr\}_{a \in A}$ be a basis of
the vector space ${\bf W}_{x,r}$. Let ${\mathcal W}^{\rm FR}$ be the associative algebra generated by
elements~$F_a[m]$, $m \in \mathbb{Z}$, $a \in A$.
Let~$J_K$ be the left ideal of ${\mathcal W}^{\rm FR}$
generated by elements $F_a[m]$, $m \geq K \in \mathbb{N}$, $a\in A$.
We define the deformed $W$-algebra
\begin{gather*}
{\mathcal W}_{x,r}^{\rm FR}\big(A_{2N}^{(2)}\big)=\lim_{\leftarrow}{\mathcal W}^{\rm FR}/J_K.
\end{gather*}

We propose another definition of the deformed $W$-algebra.
From (\ref{eqn:screening}), the $W$-currents
$T_i(z)=\sum_{m \in \mathbb{Z}}T_i[m]z^{-m}$, $1 \leq i \leq 2N$,
commute with the screening operators.
Let ${\mathcal W}^{\rm AKOS}$ be the associative algebra generated by
elements $T_i[m]$, $m \in \mathbb{Z}$, $1\leq i \leq 2N$.
Let $L_K$ be the left ideal of ${\mathcal W}^{\rm AKOS}$ generated by elements
$T_i[m]$, $m \geq K \in \mathbb{N}$, $1\leq i \leq 2N$.
We define the deformed $W$-algebra
\begin{gather*}
{\mathcal W}_{x,r}^{\rm AKOS}\big(A_{2N}^{(2)}\big)=\lim_{\leftarrow}{\mathcal W}^{\rm AKOS}/L_K.
\end{gather*}

In this study, our definitions ${\mathcal W}_{x,r}\big(A_{2N}^{(2)}\big)$
were based on generators and relations.
We have introduced three definitions of the deformed $W$-algebra
for the twisted algebra of the type $A_{2N}^{(2)}$.
%%%%%%%%%%%%%%%%%%%%%%%%%%%%%%%%%%%%%%%%%%%%%%%%%%%%%%%%%%%%

\begin{Conjecture}
\label{conj:5-1}
${\mathcal W}_{x,r}\big(A_{2N}^{(2)}\big)$,
${\mathcal W}_{x,r}^{\rm AKOS}\big(A_{2N}^{(2)}\big)$, and
${\mathcal W}_{x,r}^{\rm FR}\big(A_{2N}^{(2)}\big)$
are isomorphic as associative algebras
\begin{gather}
{\mathcal W}_{x,r}\big(A_{2N}^{(2)}\big)\cong
{\mathcal W}_{x,r}^{\rm AKOS}\big(A_{2N}^{(2)}\big)\cong
{\mathcal W}_{x,r}^{\rm FR}\big(A_{2N}^{(2)}\big).
\label{eqn:conjecture}
\end{gather}
\end{Conjecture}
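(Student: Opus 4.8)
The plan is to split the two isomorphisms in \eqref{eqn:conjecture} and to construct the connecting homomorphisms explicitly. First I would note that there is a canonical algebra homomorphism $\phi\colon {\mathcal W}_{x,r}\big(A_{2N}^{(2)}\big)\to {\mathcal W}_{x,r}^{\rm AKOS}\big(A_{2N}^{(2)}\big)$ sending each abstract generator $\overline{T}_i[m]$ to the concrete Fourier mode $T_i[m]$ acting on $\pi_\lambda$. This map is well defined because, by Proposition~\ref{prop:3-1} and Theorem~\ref{thm:3-2}, the operators $T_i(z)$ satisfy exactly the defining relations \eqref{prop:duality} and \eqref{thm:quadratic} of Definition~\ref{def:3-3}, and it is surjective by construction since ${\mathcal W}_{x,r}^{\rm AKOS}\big(A_{2N}^{(2)}\big)$ is generated by the $T_i[m]$. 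Dually, Lemma~\ref{lem:3-4} shows that every $T_i[m]$ lies in the screening commutant ${\mathbf W}_{x,r}$, which gives an inclusion $\iota\colon {\mathcal W}_{x,r}^{\rm AKOS}\big(A_{2N}^{(2)}\big)\hookrightarrow {\mathcal W}_{x,r}^{\rm FR}\big(A_{2N}^{(2)}\big)$. The conjecture therefore reduces to two statements: that $\phi$ is injective and that $\iota$ is surjective.

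For the injectivity of $\phi$ I would set up a Poincar\'e--Birkhoff--Witt type basis for the abstract algebra. Using the duality \eqref{prop:duality} to eliminate the generators $\overline{T}_{N+1},\dots,\overline{T}_{2N}$, one is left with $\overline{T}_1,\dots,\overline{T}_N$, and the quadratic relations \eqref{thm:quadratic} for $1\le i\le j\le N$ serve as straightening rules that rewrite any ``out of order'' product $\overline{T}_i[m]\,\overline{T}_j[n]$ as a sum of correctly ordered products plus lower terms involving $\overline{T}_{i-k}$ and $\overline{T}_{j+k}$. The key step is a confluence (diamond-lemma) argument showing that these rewritings terminate in a well-defined normal form, yielding a spanning set of ordered monomials. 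Injectivity of $\phi$ then follows from the linear independence of the images of these ordered monomials as operators on the Fock space $\pi_\lambda$, which one checks through the leading-order structure of the free field realization \eqref{def:A}--\eqref{def:Y}; the recursion at the end of the proof of Lemma~\ref{lem:3-4}, expressing every $T_{j+1}[m]$ through $T_j$, $T_{j-1}$ and $T_1$, guarantees that no further independent relations can appear.

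For the surjectivity of $\iota$ I would compare graded characters at the level of currents. Since the inclusion is already in hand, it suffices to show that every current in ${\mathbf W}_{x,r}$ is a normally ordered differential polynomial in $T_1(z),\dots,T_N(z)$. I would compute the graded character of ${\mathbf W}_{x,r}$ as the Euler characteristic of a deformed Felder-type complex of Fock modules whose zeroth cohomology realizes the intersection $\bigcap_{j=1}^N \ker S_j$, after establishing the vanishing of its higher cohomology. Matching this character against the one generated by the free-field images of the ordered monomials from the previous paragraph would then force the two spaces of currents, and hence the two algebras, to coincide.

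The hard part will be exactly this last character-matching, that is, proving that the $T_i$ generate the \emph{entire} commutant rather than a proper subalgebra. Establishing the exactness of the deformed screening complex for $A_{2N}^{(2)}$ is genuinely delicate: the duality $T_{2N+1-i}(z)=c_i T_i(z)$ and the failure of the relations \eqref{thm:quadratic} to preserve the parity $i+j$ mean that the cohomological arguments available for types $A_N^{(1)}$ and $A(M,N)^{(1)}$ do not transfer verbatim, which is precisely why \eqref{eqn:conjecture} is stated as a conjecture rather than a theorem.
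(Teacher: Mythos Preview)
Your reduction of the conjecture to the two statements ``$\phi$ is injective'' and ``$\iota$ is surjective'' matches exactly what the paper does, and you correctly observe at the end that neither step is actually completed---this is a \emph{conjecture}, not a theorem, and the paper offers no proof. The paper records precisely the same two facts you open with: the surjection $\varphi\colon{\mathcal W}_{x,r}\big(A_{2N}^{(2)}\big)\twoheadrightarrow{\mathcal W}_{x,r}^{\rm AKOS}\big(A_{2N}^{(2)}\big)$ given by $\overline{T}_i[m]\mapsto T_i[m]$, well defined thanks to Proposition~\ref{prop:3-1} and Theorem~\ref{thm:3-2}, and the inclusion ${\mathcal W}_{x,r}^{\rm AKOS}\big(A_{2N}^{(2)}\big)\subseteq{\mathcal W}_{x,r}^{\rm FR}\big(A_{2N}^{(2)}\big)$ coming from Lemma~\ref{lem:3-4}. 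Beyond that the paper only offers circumstantial evidence (the $q$-Poisson limit \eqref{def:q-Poisson}--\eqref{def:q-Poisson2} and the known conformal limit for ${\mathcal W}_\beta\big(A_N^{(1)}\big)$, $N=1,2$) in support of injectivity.

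Your sketch goes further than the paper by naming concrete mechanisms---a diamond-lemma normal form for the injectivity of $\phi$, and a Felder-complex character computation for the surjectivity of $\iota$---but, as you yourself concede in the last paragraph, neither is carried out. The diamond-lemma step is not obviously terminating here: the right-hand side of \eqref{thm:quadratic} produces $T_{i-k}T_{j+k}$ with $j+k$ possibly exceeding $N$, so one must first fold back through the duality \eqref{prop:duality}, and it is not clear that any monomial ordering is compatible with both the straightening and the folding. Likewise, no deformed Felder resolution with proven higher-cohomology vanishing is available for $A_{2N}^{(2)}$ in the literature, so the character argument is a program rather than a proof. In short, your proposal is a reasonable research outline that subsumes the paper's partial observations, but it does not close the gap, and the paper makes no claim to close it either.
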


The author believes that this conjecture can be extended to
arbitrary affine Lie algebras.
Some necessary conditions of isomorphism (\ref{eqn:conjecture})
in Conjecture \ref{conj:5-1}
can be indicated immediately.
From (\ref{eqn:screening}), we obtain the following inclusion:
\begin{gather*}
{\mathcal W}_{x,r}^{\rm AKOS}\big(A_{2N}^{(2)}\big) \subseteq
{\mathcal W}_{x,r}^{\rm FR}\big(A_{2N}^{(2)}\big).
\end{gather*}
We establish a homomorphism of associative algebras
$\varphi \in {\rm Hom}_{{\bf C}}\big({\mathcal W}_{x,r}\big(A_{2N}^{(2)}\big),
{\mathcal W}_{x,r}^{\rm AKOS}\big(A_{2N}^{(2)}\big)\big)$
using $\varphi(\overline{T}_i[m])=T_i[m]$. $\varphi$ is surjective,
\begin{gather*}
\varphi\big({\mathcal W}_{x,r}\big(A_{2N}^{(2)}\big)\big)
={\mathcal W}_{x,r}^{\rm AKOS}\big(A_{2N}^{(2)}\big).
\end{gather*}
If we assume that $\varphi$ is injective,
the isomorphism on the left side in (\ref{eqn:conjecture}) is obtained.
In other words, no independent relations other than (\ref{prop:duality})
and (\ref{thm:quadratic}) exist
in ${\mathcal W}_{x,r}\big(A_{2N}^{(2)}\big)$.
We propose two results to support this claim.
In the classical limit the second Hamiltonian structure $\{\cdot, \cdot\}$
of the $q$-Poisson algebra
\cite{Frenkel-Reshetikhin2,Frenkel-Reshetikhin1, Frenkel-Reshetikhin-Semenov,
Semenov-Sevostyanov}
was obtained from the quadratic relations
(see (\ref{def:q-Poisson}) and~(\ref{def:q-Poisson2})).
In the conformal limit all defining relations of the $W$-algebra
${\mathcal W}_\beta\big(A_N^{(1)}\big)$, $N=1, 2$,
are obtained from the quadratic relations of
${\mathcal W}_{x, r}\big(A_N^{(1)}\big)$ {upon the assumption
that the currents $T_i(z)$ have the form of expansion
for small parameter $\hbar$}
(see~\cite[Appendix]{Awata-Kubo-Odake-Shiraishi2}).

The definition of the deformed $W$-algebra ${\mathcal W}_{x,r}(\mathfrak{g})$
for non-twisted affine Lie algebra ${\mathfrak g}$
was formulated in terms of~the quantum Drinfeld--Sokolov reduction in~\cite{Sevostyanov}.
Formulating the definition of
the deformed $W$-algebras ${\mathcal W}_{x,r}({\mathfrak g})$
in terms of the quantum Drinfeld--Sokolov reduction
for~twisted affine Lie algebra or affine Lie superalgebra
\cite{Ding-Feigin,Feigin-Jimbo-Mukhin-Vilkoviskiy,
Harada-Matsuo-Noshita-Watanabe, Kojima2, Kojima1} is still a problem that needs to be solved.

It remains an open challenge
to identify quadratic relations of the deformed $W$-algebras
${\mathcal W}_{x,r}({\mathfrak g})$ for the affine Lie algebras ${\mathfrak g}$
except for types $A_N^{(1)}$ and $A_{2N}^{(2)}$. We believe
that this paper presents a key step towards extending our construction for general affine Lie algebras
${\mathfrak g}$. In \cite{Frenkel-Reshetikhin1} and
\cite{Feigin-Jimbo-Mukhin-Vilkoviskiy} the free field construction of the basic $W$-current $T_1(z)$
of ${\mathcal W}_{x,r}\big({\mathfrak g}\big)$
was suggested in the case when
the underlying simple finite-dimensional Lie algebra
$\stackrel{\circ}{\mathfrak g}$ is of classical type,
\begin{gather*}
T_1(z)=\begin{cases}
\Lambda_1(z)+\cdots \Lambda_{N+1}(z)&{{\rm for}~{\mathfrak g}~{\rm of~type}}~A_N^{(1)},
\\
\Lambda_1(z)+\cdots+\Lambda_N(z)+\Lambda_0(z)
\\[-.5ex]
\hphantom{\Lambda_1(z)}+\Lambda_{\overline{N}}(z)+\cdots+\Lambda_{\overline{1}}(z)&
{{\rm for}~{\mathfrak g}~{\rm of~types}}~B_N^{(1)}, A_{2N}^{(2)}, D_{N+1}^{(2)},
\\
\Lambda_1(z)+\cdots+\Lambda_N(z)+
\Lambda_{\overline{N}}(z)+\cdots+\Lambda_{\overline{1}}(z)
&{{\rm for}~{\mathfrak g}~{\rm of~types}}~C_N^{(1)}, D_N^{(1)}, A_{2N-1}^{(2)}.
\end{cases}
\end{gather*}
Here we omit details of free field constructions of $\Lambda_i(z)$.
The free field construction of $T_1(z)$ has similar form to that
for ${\mathfrak g}$ of type $A_{2N}^{(2)}$
except for the case of $A_N^{(1)}$. Therefore, we expect that
a similar duality as (\ref{prop:duality}) and similar quadratic relation (\ref{thm:quadratic})
hold in all cases in types $B_N^{(1)}$, $C_N^{(1)}$, $D_N^{(1)}$, $A_{2N-1}^{(2)}$, and
$D_{N+1}^{(2)}$. We would like to draw your attention to the following analogy.
Let ${\mathfrak g}$ be an affine Lie algebras of one of the types
$B_N^{(1)}$, $C_N^{(1)}$, $D_N^{(1)}$, $A_{2N-1}^{(2)}$, or~$D_{N+1}^{(2)}$.
Let $\stackrel{\circ}{\mathfrak g}$ be
the underlying simple finite-dimensional Lie algebra.
Let $\stackrel{\circ}{\mathfrak h}$ be a Cartan subalgebra of~$\stackrel{\circ}{\mathfrak g}$.
Let $\overline{\Lambda}_1, \overline{\Lambda}_2, \dots, \overline{\Lambda}_l$
be the fundamental weights of $\stackrel{\circ}{\mathfrak g}$, where~$l$ is the dimension of $\stackrel{\circ}{\mathfrak h}$.
Let~$V_{\overline{\Lambda}_1}$ be the integrable highest weight representation of
$U_q(\stackrel{\circ}{\mathfrak g})$ with the highest weight $\overline{\Lambda}_1$.
Let~$V$ be the evaluation representation corresponding to $V_{\overline{\Lambda}_1}$
of the quantum affine algebra $U_q({\mathfrak g})$
with a~spectral parameter $z \in {\mathbf C}^\times$.
Let $n$ be the dimension of $V_{\overline{\Lambda}_1}$.
We have $\stackrel{n-i}{\wedge} V \simeq\bigl(\stackrel{i}{\wedge} V \bigr)^* \simeq
\stackrel{i}{\wedge} V^*$, because $\stackrel{n}{\wedge} V \simeq {\mathbf C}$.
The evaluation representation $V$ of $U_q\big({\mathfrak g}\big)$ is self-dual except for
${\mathfrak g}$ of type~$A_N^{(1)}$. Hence, we obtain the duality of the representations of
$U_q\bigl({\mathfrak g}\bigr)$,
\begin{gather*}
\stackrel{n-i}{\wedge} V \simeq
\stackrel{i}{\wedge} V\qquad {\rm if}\quad {\mathfrak g}~{\rm is~not~of~type}~A_N^{(1)},
\end{gather*}
which is similar as {that} in (\ref{prop:duality}).
As an analogy, we expect the duality of the $W$-currents,
\begin{gather*}
T_{n-i}(z)=c_i T_i(z)\qquad {\rm if}\quad
{\mathfrak g}~{\rm is~not~of~type}~A_N^{(1)},
\end{gather*}
for the deformed $W$ algebras ${\mathcal W}_{x,r}({\mathfrak g})$.
Here $c_i$, $0 \leq i \leq n$, are constants.

It remains an open challenge
to identify quadratic relations of the deformed $W$-algebras
${\mathcal W}_{x,r}({\mathfrak g})$ for affine
superalgebra ${\mathfrak g}$
except for those of type $A(M,N)^{(1)}$.
Recently the deformed $W$-superalgebra
${\mathcal W}_{x,r}({\mathfrak g})$
has appeared in the study of D-branes and physical interest is growing to this subject, see, e.g.,
\cite{Harada-Matsuo-Noshita-Watanabe}.
As revealed in~\cite{Harada-Matsuo-Noshita-Watanabe, Kojima2, Kojima1},
it is expected that, in cases of superalgebras ${\mathfrak g}$,
infinite number of higher $W$-currents $T_i(z)$, $i=1,2,3,\dots$,
satisfy a closed set of infinite number of quadratic relations.
It is interesting to understand how duality will be extended to the case of superalgebras.
We expect to report on quadratic relations and duality
for more general deformed $W$-algebras
${\mathcal W}_{x,r}({\mathfrak g})$ associated with
affine Lie algebras and affine Lie superalgebras in the near future.

\appendix

\section{Normal ordering rules}
\label{appendix:normal}

We list the normal ordering rules.
For operators $V(z)$ and $W(w)$ we use the notation
\begin{gather}
V(z)W(w)=\langle V(z)W(w) \rangle {:}V(z)W(w){:}
\label{def:coupling}
\end{gather}
and write down only the part $\langle V(z)W(w) \rangle$
in the formulas below. Using the standard formula
\begin{gather*}
{\rm e}^A{\rm e}^B={\rm e}^{[A,B]}{\rm e}^B{\rm e}^A,\qquad [[A,B],A]=0\quad \text{and}\quad [[A,B],B]=0,
\end{gather*}
we obtain the normal ordering rules.

\subsection[\protect{$A\_\{i\}(z)$} and \protect{$S\_\{i\}(z)$}]
{$\boldsymbol{A_i(z)}$ and $\boldsymbol{S_i(z)}$}

\begin{gather}
\langle A_i(z_1)A_i(z_2)\rangle
=\bigg(\Delta\bigg(\frac{xz_2}{z_1}\bigg)
\Delta\bigg(\frac{x^{-1}z_2}{z_1}\bigg)\bigg)^{-1},\qquad
1\leq i \leq N-1,\nonumber
\\
\langle A_N(z_1)A_N(z_2)\rangle
=\Delta\bigg(\frac{z_2}{z_1}\bigg)
\bigg(\Delta\bigg(\frac{x z_2}{z_1}\bigg)
\Delta\bigg(\frac{x^{-1}z_2}{z_1}\bigg)\bigg)^{-1},\nonumber
\\
\langle A_i(z_1)A_j(z_2)\rangle
=\Delta\bigg(\frac{z_2}{z_1}\bigg),\qquad
|i-j|=1,\quad 1\leq i, j \leq N,\nonumber
\\
\langle A_i(z_1)A_j(z_2)\rangle=1,\qquad
|i-j|\geq 2,\quad 1\leq i, j \leq N,\label{appendix:A}
\\
\langle S_i(z_1)S_i(z_2)\rangle
=z_1^{\frac{2(r-1)}{r}}\bigg(1-\frac{z_2}{z_1}\bigg)
\frac{\big(x^2z_2/z_1;x^{2r}\big)_\infty}{\big(x^{2r-2}z_2/z_1;x^{2r}\big)_\infty},\qquad
1\leq i \leq N-1,\nonumber
\\
\langle S_N(z_1)S_N(z_2)\rangle
=z_1^{\frac{r-1}{r}}\bigg(1-\frac{z_2}{z_1}\bigg)
\frac{(x^2z_2/z_1;x^{2r})_\infty \big(x^{2r-2}z_2/z_1;x^{2r}\big)_\infty}{
\big(xz_2/z_1;x^{2r}\big)_\infty \big(x^{2r-1}z_2/z_1;x^{2r}\big)_\infty},\nonumber
\\
\langle S_i(z_1)S_j(z_2)\rangle
=z_1^{-\frac{r-1}{r}}\frac{\big(x^{2r-1}z_2/z_1;x^{2r}\big)_\infty}{\big(xz_2/z_1;x^{2r}\big)_\infty},\qquad
|i-j|=1,\quad 1\leq i,j \leq N,\nonumber
\\
\langle S_i(z_1)S_j(z_2)\rangle=1,\qquad
|i-j|\geq 2,\quad 1\leq i, j \leq N,
\label{appendix:S}
\\
\langle A_i(z_1)S_i(z_2)\rangle=x^{-4(r-1)}
\frac{\big(1-x^r\frac{z_2}{z_1}\big)\big(1-x^{r-2}\frac{z_2}{z_1}\big)}{
\big(1-x^{-r}\frac{z_2}{z_1}\big)\big(1-x^{2-r}\frac{z_2}{z_1}\big)},\qquad
1\leq i \leq N-1,\nonumber
\\
\langle A_N(z_1)S_N(z_2)\rangle=x^{-2(r-1)}\frac{
\big(1-x^r\frac{z_2}{z_1}\big)\big(1-x^{r-2}\frac{z_2}{z_1}\big)
\big(1-x^{1-r}\frac{z_2}{z_1}\big)}{\big(1-x^{-r}\frac{z_2}{z_1}\big)
\big(1-x^{2-r}\frac{z_2}{z_1}\big)\big(1-x^{r-1}\frac{z_2}{z_1}\big)},
\\
\langle A_i(z_1)S_j(z_2)\rangle
=x^{2(r-1)}\frac{\big(1-x^{1-r}\frac{z_2}{z_1}\big)}{\big(1-x^{r-1}\frac{z_2}{z_1}\big)},\qquad
|i-j|=1,\quad 1\leq i,j \leq N,\nonumber
\\
\langle A_i(z_1)S_j(z_2)\rangle=1,\qquad
|i-j|\geq 2,\quad 1\leq i,j \leq N,
\label{appendix:AS}
\\
\langle S_i(z_1)A_i(z_2)\rangle=\frac{\big(1-x^{2-r}\frac{z_2}{z_1}\big)}{
\big(1-x^{r}\frac{z_2}{z_1}\big)\big(1-x^{r-2}\frac{z_2}{z_1}\big)},\qquad
1\leq i \leq N-1,\nonumber
\\
\langle S_N(z_1)A_N(z_2)\rangle=\frac{
\big(1-x^{-r}\frac{z_2}{z_1}\big)\big(1-x^{2-r}\frac{z_2}{z_1}\big)
\big(1-x^{r-1}\frac{z_2}{z_1}\big)}{
\big(1-x^{r}\frac{z_2}{z_1}\big)\big(1-x^{r-2}\frac{z_2}{z_1}\big)
\big(1-x^{1-r}\frac{z_2}{z_1}\big)},\nonumber
\\
\langle S_j(z_1)A_i(z_2)\rangle=\frac{\big(1-x^{r-1}\frac{z_2}{z_1}\big)}{
\big(1-x^{1-r}\frac{z_2}{z_1}\big)},\qquad |i-j|=1,\quad 1\leq i,j \leq N,\nonumber
\\
\langle S_j(z_1)A_i(z_2)\rangle=1,\qquad |i-j|\geq 2,\quad 1\leq i,j \leq N.
\label{appendix:SA}
\end{gather}

\subsection[\protect{$Y\_\{i\}(z)$}, \protect{$A\_\{i\}(z)$} and \protect{$S\_\{i\}(z)$}]
{$\boldsymbol{Y_i(z)}$, $\boldsymbol{A_i(z)}$ and $\boldsymbol{S_i(z)}$}
The symmetric matrix $I(m)=(I_{i,j}(m))_{i,j=1}^N$ is the inverse matrix of $B(m)$.
The elements $I_{i,j}(m)=I_{j,i}(m)$, $1\leq i \leq j \leq N$, are written as
\begin{gather*}
I_{i, j}(m)=\frac{1}{[(N\!+\!1)m]_x-[Nm]_x}
\times \begin{cases}
[(N+1-j)m]_x-[(N-j)m]_x, &i=1,\ 1\leq j \leq N,
\\[1ex]
(-1)^{N-j+i}\displaystyle\sum_{k=i-1}^{N-j+i}(-1)^k [km]_x,
&2\leq i \leq j \leq N-1,
\\[1ex]
[i m]_x,&1\leq i \leq N,\ j=N.
\end{cases}
\label{appendix:I(m)}
\end{gather*}
The generators $y_i(m)$, $1\leq i \leq N$, are written as
\begin{gather}
y_i(m)=\sum_{j=1}^N I_{i,j}(m)a_j(m),\qquad
Q_i^y=\sum_{j=1}^N I_{i,j}(0)Q_j.
\label{eqn:y(m)}
\end{gather}
From (\ref{def:A}), (\ref{def:Y}) and (\ref{appendix:I(m)}) we obtain
\begin{gather}
\langle Y_1(z_1)Y_1(z_2)\rangle=f_{1,1}\bigg(\frac{z_2}{z_1}\bigg)^{-1},\nonumber
\\
\langle Y_1(z_1)A_1(z_2)\rangle =\Delta\bigg(\frac{z_2}{z_1}\bigg)^{-1},\qquad
\langle Y_1(z_1)A_i(z_2)\rangle=1,\qquad 2\leq i \leq N,\nonumber
\\
\langle A_1(z_1)Y_1(z_2)\rangle=\Delta\bigg(\frac{z_2}{z_1}\bigg)^{-1},\qquad
\langle A_i(z_1)Y_1(z_2)\rangle=1,\qquad 2\leq i \leq N,\nonumber
\\
\langle Y_1(z_1)S_1(z_2)\rangle =x^{-2(r-1)}\frac{\big(1-x^{r-1}\frac{z_2}{z_1}\big)}{\big(1-x^{1-r}\frac{z_2}{z_1}\big)},\qquad
\langle Y_1(z_1)S_i(z_2)\rangle=1,\qquad 2\leq i \leq N,
\nonumber
\\
\langle S_1(z_1)Y_1(z_2)\rangle
=\frac{\big(1-x^{1-r}\frac{z_2}{z_1}\big)}{\big(1-x^{r-1}\frac{z_2}{z_1}\big)},\qquad
\langle S_i(z_1)Y_1(z_2)\rangle =1,\qquad 2\leq i \leq N.
\label{appendix:Y}
\end{gather}

\section{Exchange relations}\label{appendix:exchange}
In this appendix we list the exchange relations.

\subsection[\protect{$Lambda\_\{i\}(z)$}]
{$\boldsymbol{\Lambda_i(z)}$}
We give the exchange relations of $\Lambda_j(z)$ and $\overrightarrow{\Lambda}_{\Omega_i}(z)$,
which are obtained from (\ref{eqn:Lambda}).
We set $s \in J_N=\{1, 2, \dots, N, 0, \overline{N}, \dots, \overline{2}, \overline{1}\}$.
For an element $s \in J_N$ and a subset
$\Omega_i=\{s_1, s_2, \dots, s_i\} \subset J_N$ with $s_1 \prec s_2 \prec \cdots \prec s_i$, we calculate
\begin{gather*}
X_{\Omega_i,s}(z_1,z_2)=f_{1,i}\bigg(\frac{z_2}{z_1}\bigg)\Lambda_s(z_1) \overrightarrow{\Lambda}_{\Omega_i}(z_2)-
f_{i,1}\bigg(\frac{z_1}{z_2}\bigg)\overrightarrow{\Lambda}_{\Omega_i}(z_2)\Lambda_s(z_1).
%\label{def:exchange-L}
\end{gather*}
%%%%%%%%%%%%%%%%%%%%%%%%%%%%%%%%%%%%%%%%%%%%%%%%%%%%%%%%%%%%%%%%%%
$\bullet$~In the case of $s, \overline{s} \notin \Omega_i$, we obtain
\begin{gather}
{X_{\Omega_i,s}(z_1,z_2)}=
c(x,r){:}\Lambda_s(z_1)\overrightarrow{\Lambda}_{\Omega_i}(z_2){:}
\bigg(\delta\bigg(\frac{x^{-i-3+2k} z_2}{z_1}\bigg)-
\delta\bigg(\frac{x^{-i-1+2k} z_2}{z_1}\bigg)\bigg).
\label{exchange:L1}
\end{gather}
Here we set $k$, $1\leq k \leq i+1$, by
$k=\begin{cases}
1&{\rm if}\quad s\prec s_1,\\
q&{\rm if}\quad s_{q-1}\prec s \prec s_q,\quad 2 \leq q \leq i,\\
i+1&{\rm if}\quad s_i \prec s.
\end{cases}$

%%%%%%%%%%%%%%%%%%%%%%%%%%%%%%%%%%%%%%%%%%%%%%%%%%%%%%%%%%%%%%%%%%
\noindent
$\bullet$
In the case of $s \in \Omega_i$ and $\overline{s} \notin \Omega_i$,
we obtain
\begin{gather}
f_{1,i}\bigg(\frac{z_2}{z_1}\bigg)\Lambda_s(z_1)\overrightarrow{\Lambda}_{\Omega_i}(z_2)-
f_{i,1}\bigg(\frac{z_1}{z_2}\bigg)\overrightarrow{\Lambda}_{\Omega_i}(z_2)\Lambda_s(z_1)=0.
\label{exchange:L2}
\end{gather}
%%%%%%%%%%%%%%%%%%%%%%%%%%%%%%%%%%%%%%%%%%%%%%%%%%%%%%%%%%%%%%%%%%
$\bullet$
In the case of $s$, $\overline{s} \in \Omega_i$ and $s=n$,
$1\leq n \leq N$, we obtain
\begin{align}
X_{\Omega_i,s}(z_1,z_2)={}&
c(x,r){:}\Lambda_n(z_1)\overrightarrow{\Lambda}_{\Omega_i}(z_2){:}\nonumber
\\
&\times\bigg(\delta\bigg(\frac{x^{-2N-i+2n+2l-4}z_2}{z_1}\bigg)-
\delta\bigg(\frac{x^{-2N-i+2n+2l-2} z_2}{z_1}\bigg)\bigg).
\label{exchange:L3}
\end{align}
Here we set $k$, $l$, $1\leq k<l \leq i$, by $s=n=s_k$ and
$\overline{s}=\overline{n}=s_l$.

%%%%%%%%%%%%%%%%%%%%%%%%%%%%%%%%%%%%%%%%%%%%%%%%%%%%%%%%%%%%%%%%%%%%
\noindent
$\bullet$
In the case of $s$, $\overline{s} \in \Omega_i$ and $s=\overline{n}$,
$1\leq n \leq N$, we obtain
\begin{align}
X_{\Omega_i,s}(z_1,z_2)={}&
c(x,r){:}\Lambda_{\overline{n}}(z_1)\overrightarrow{\Lambda}_{\Omega_i}(z_2){:}\nonumber
\\
&\times\bigg(\delta\bigg(\frac{x^{2N-i-2n+2k}z_2}{z_1}\bigg)-
\delta\bigg(\frac{x^{2N-i-2n+2k+2} z_2}{z_1}\bigg)\bigg).
\label{exchange:L4}
\end{align}
Here we set $k$, $l$, $1\leq k<l \leq i$,
by $\overline{s}=n=s_k$ and $s=\overline{n}=s_l$.
%%%%%%%%%%%%%%%%%%%%%%%%%%%%%%%%%%%%%%%%%%%%%%%%%%%%%%%%%%%%%%%%%%%%

\noindent
$\bullet$
In the case of $s=0 \in \Omega_i$, we obtain
\begin{gather}
{X_{\Omega_i,s}(z_1,z_2)}=
c(x,r){:}\Lambda_{0}(z_1)\overrightarrow{\Lambda}_{\Omega_i}(z_2){:}
\bigg(\delta\bigg(\frac{x^{-i-2+2k}z_2}{z_1}\bigg)-
\delta\bigg(\frac{x^{-i+2k} z_2}{z_1}\bigg)\bigg).
\label{exchange:L5}
\end{gather}
Here we set $k$, $1\leq k \leq i$, by $s_k=0$.
%%%%%%%%%%%%%%%%%%%%%%%%%%%%%%%%%%%%%%%%%%%%%%%%%%%%%%%%%%%%%%%%%%%%

\noindent
$\bullet$
In the case of $s \notin \Omega_i$ and $ \overline{s} \in \Omega_i$
and $s=n$, $1\leq n \leq N$, we obtain
\begin{align}
X_{\Omega_i,s}(z_1,z_2)={}&c(x,r)\Delta\big(x^{2(l-k+n-N)}\big)
{:}\Lambda_{n}(z_1)\overrightarrow{\Lambda}_{\Omega_i}(z_2){:}\notag
\\
&\phantom{+}\times\bigg(\delta\bigg(\frac{x^{-i+2k-3}z_2}{z_1}\bigg)-
\delta\bigg(\frac{x^{-2N+2n+2l-i-2} z_2}{z_1}\bigg)\bigg)
\notag
\\
&+c(x,r)\Delta\big(x^{2(l-k+n-N-1)}\big)
{:}\Lambda_{n}(z_1)\overrightarrow{\Lambda}_{\Omega_i}(z_2){:}\notag
\\
&\phantom{+}\times\bigg({-}\delta\bigg(\frac{x^{-i+2k-1}z_2}{z_1}\bigg)+
\delta\bigg(\frac{x^{-2N+2n+2l-i-4} z_2}{z_1}\bigg)\bigg).
\label{exchange:L6}
\end{align}
Here we set $k$, $l$, $1 \leq k \leq l \leq i$, by $s_l=\overline{s}=\overline{n}$ and
$k=\begin{cases}
1&{\rm if}\quad s=n\prec s_1,\\
q&{\rm if}\quad s_{q-1}\prec s=n \prec s_q,\quad 2 \leq q \leq i.
\end{cases}$
%%%%%%%%%%%%%%%%%%%%%%%%%%%%%%%%%%%%%%%%%%%%%%%%%%%%%%%%%%%%%%%%%

\noindent
$\bullet$
In the case of $s \notin \Omega_i$ and $\overline{s} \in \Omega_i$ and
$s=\overline{n}$, $1\leq n \leq N$, we obtain
\begin{align}
X_{\Omega_i,s}(z_1,z_2)={}&
c(x,r)\Delta\big(x^{2(l-k+n-N-1)}\big)
{:}\Lambda_{\overline{n}}(z_1)\overrightarrow{\Lambda}_{\Omega_i}(z_2){:}\notag
\\
&\phantom{+}\times\bigg(\delta\bigg(\frac{x^{-i+2l-1}z_2}{z_1}\bigg)-
\delta\bigg(\frac{x^{2N-2n-i+2k+2} z_2}{z_1}\bigg)\bigg)\notag
\\
&+c(x,r)\Delta\big(x^{2(l-k+n-N)}\big)
{:}\Lambda_{\overline{n}}(z_1)\overrightarrow{\Lambda}_{\Omega_i}(z_2){:}\notag
\\
&\phantom{+}\times\bigg({-}\delta\bigg(\frac{x^{-i+2l+1}z_2}{z_1}\bigg)+
\delta\bigg(\frac{x^{2N-2n-i+2k} z_2}{z_1}\bigg)\bigg).
\label{exchange:L7}
\end{align}
Here we set $k$, $l$, $1\leq k \leq l \leq i$, by
$s_k=\overline{s}=n$ and $l=\begin{cases}
q&{\rm if}\ \ s_{q} \prec s=\overline{n} \prec s_{q+1},\ \ 1 \leq q \leq i-1,\\
i&{\rm if}\ \ s_i \prec s=\overline{n}.
\end{cases}$

\subsection[\protect{$S\_\{i\}(z)$}]{$\boldsymbol{S_i(z)}$}
From (\ref{appendix:S}) we obtain
\begin{gather}
S_i(z_1)S_i(z_2)=-\frac{[u_2-u_1+1]}{[u_1-u_2+1]}S_i(z_2)S_i(z_1),\qquad
1\leq i \leq N-1,\nonumber
\\
S_N(z_1)S_N(z_2)=-\frac{[u_1-u_2+\frac{1}{2}][u_2-u_1+1]}{
[u_2-u_1+\frac{1}{2}][u_1-u_2+1]}S_N(z_2)S_N(z_1),\nonumber
\\
S_i(z_1)S_j(z_2)=\frac{[u_1-u_2+\frac{1}{2}]}{[u_2-u_1+\frac{1}{2}]}S_j(z_2)S_i(z_1),\qquad
|i-j|=1,\quad 1\leq i,j \leq N,\nonumber
\\
S_i(z_1)S_j(z_2)=S_j(z_2)S_i(z_1),\qquad
|i-j|\geq 2,\quad 1\leq i,j \leq N.\label{exchange:S}
\end{gather}
Here we set $z_i=x^{2u_i}$, $i=1,2$ and
$[u]=x^{\frac{u^2}{r}-2u}\Theta_{x^{2r}}(z)$.

\subsection[\protect{$Lambda\_\{i\}(z)$} and \protect{$S\_\{i\}(z)$}]
{$\boldsymbol{\Lambda_i(z)}$ and $\boldsymbol {S_i(z)}$}
From (\ref{appendix:AS}), (\ref{appendix:SA}) and (\ref{appendix:Y}) we obtain
\begin{gather}
[\Lambda_k(z_1),S_k(z_2)]=\big(x^{-2r+2}-1\big){:}\Lambda_k(z_1)S_k(z_2){:}
\delta\bigg(\frac{x^{k-r}z_2}{z_1}\bigg),\qquad
1\leq k \leq N,\nonumber
\\
[\Lambda_{k+1}(z_1),S_k(z_2)]=\big(x^{2r-2}-1\big){:}\Lambda_{k+1}(z_1)S_k(z_2){:}
\delta\bigg(\frac{x^{k+r}z_2}{z_1}\bigg),\qquad
1\leq k \leq N-1,\nonumber
\\
[\Lambda_{\overline{k}}(z_1),S_k(z_2)]=\big(x^{2r-2}-1\big)
{:}\Lambda_{\overline{k}}(z_1)S_k(z_2){:}
\delta\bigg(\frac{x^{2N+1-k+r}z_2}{z_1}\bigg),\qquad 1\leq k \leq N,
\nonumber
\\
[\Lambda_{\overline{k+1}}(z_1),S_k(z_2)]=\big(x^{-2r+2}-1\big)
{:}\Lambda_{\overline{k+1}}(z_1)S_k(z_2){:}
\delta\bigg(\frac{x^{{2N+1-k-r}}z_2}{z_1}\bigg),\qquad
1\leq k \leq N-1,\nonumber
\\
[\Lambda_0(z_1),S_N(z_2)]={\big(x-x^{-1}\big)}
\frac{[r-1]_x\big[\frac{1}{2}\big]_x}{\big[r-\frac{1}{2}\big]_x}
\bigg(\delta\bigg(\frac{x^{r+N}z_2}{z_1}\bigg)
-\delta\bigg(\frac{x^{-r+N+1}z_2}{z_1}\bigg)\bigg)\nonumber
\\ \hphantom{[\Lambda_0(z_1),S_N(z_2)]=}
{}\times{:}\Lambda_0(z_1)S_N(z_2){:}.
\label{exchange:Lambda-S}
\end{gather}
Other commutators on the type $[\Lambda_i(z_1), S_k(z_2)]$
that are used in the proof of Lemma \ref{lemma:4-9} are zeroes.

\subsection*{Acknowledgements}

The author would like to thank Professor Michio Jimbo
for his kind and courteous advice.
The author would like to thank the referees
for their careful reading of the paper and their helpful comments.
The author would like to thank Editage ({\it www.editage.com}) for English language editing.
This work was supported by
JSPS KAKENHI (Grant Number JP19K03509).

%\bibliographystyle{sigma}
%\bibliography{sigma22-06Kojima}

\begin{thebibliography}{99}
\footnotesize\itemsep=0pt

\bibitem{Awata-Kubo-Odake-Shiraishi2}
Awata H., Kubo H., Odake S., Shiraishi J., Quantum deformation of the
  ${\mathcal W}_N$ algebras, \href{https://arxiv.org/abs/q-alg/9612001}{arXiv:q-alg/9612001}.

\bibitem{Awata-Kubo-Odake-Shiraishi}
Awata H., Kubo H., Odake S., Shiraishi J., Quantum {${\mathcal W}_N$} algebras
  and {M}acdonald polynomials, \href{https://doi.org/10.1007/BF02102595}{\textit{Comm. Math. Phys.}} \textbf{179} (1996),
  401--416, \href{https://arxiv.org/abs/q-alg/9508011}{arXiv:q-alg/9508011}.

\bibitem{Brazhnikov-Lukyanov}
Brazhnikov V., Lukyanov S., Angular quantization and form factors in massive
  integrable models, \href{https://doi.org/10.1016/S0550-3213(97)00713-X}{\textit{Nuclear Phys.~B}} \textbf{512} (1998), 616--636,
  \href{https://arxiv.org/abs/hep-th/9707091}{arXiv:hep-th/9707091}.

\bibitem{Ding-Feigin}
Ding J., Feigin B., Quantized {$W$}-algebra of {$\mathfrak{sl}(2,1)$}: a
  construction from the quantization of screening operators, in Recent
  Developments in Quantum Affine Algebras and Related Topics ({R}aleigh, {NC},
  1998), \textit{Contemp. Math.}, Vol. 248, \href{https://doi.org/10.1090/conm/248/03819}{Amer. Math. Soc.}, Providence, RI,
  1999, 83--108, \href{https://arxiv.org/abs/math.QA/9801084}{arXiv:math.QA/9801084}.

\bibitem{Feigin-Frenkel}
Feigin B., Frenkel E., Quantum {$\mathcal W$}-algebras and elliptic algebras,
  \href{https://doi.org/10.1007/BF02108819}{\textit{Comm. Math. Phys.}} \textbf{178} (1996), 653--678,
  \href{https://arxiv.org/abs/q-alg/9508009}{arXiv:q-alg/9508009}.

\bibitem{Feigin-Jimbo-Mukhin-Vilkoviskiy}
Feigin B., Jimbo M., Mukhin E., Vilkoviskiy I., Deformations of {$\mathcal
  {W}$} algebras via quantum toroidal algebras, \href{https://doi.org/10.1007/s00029-021-00663-0}{\textit{Selecta Math.~(N.S.)}}
  \textbf{27} (2021), 52, 62~pages, \href{https://arxiv.org/abs/2003.04234}{arXiv:2003.04234}.

\bibitem{Frenkel-Reshetikhin2}
Frenkel E., Reshetikhin N., Quantum affine algebras and deformations of the
  {V}irasoro and {${\mathcal W}$}-algebras, \href{https://doi.org/10.1007/BF02104917}{\textit{Comm. Math. Phys.}}
  \textbf{178} (1996), 237--264, \href{https://arxiv.org/abs/q-alg/9505025}{arXiv:q-alg/9505025}.

\bibitem{Frenkel-Reshetikhin1}
Frenkel E., Reshetikhin N., Deformations of {$\mathcal W$}-algebras associated
  to simple {L}ie algebras, \textit{Comm. Math. Phys.} \textbf{197} (1998),
  1--32, \href{https://arxiv.org/abs/q-alg/9708006}{arXiv:q-alg/9708006}.

\bibitem{Frenkel-Reshetikhin-Semenov}
Frenkel E., Reshetikhin N., Semenov-Tian-Shansky M.A., Drinfeld--{S}okolov
  reduction for difference operators and deformations of {${\mathcal
  W}$}-algebras.~{I}. {T}he case of {V}irasoro algebra, \href{https://doi.org/10.1007/s002200050311}{\textit{Comm. Math.
  Phys.}} \textbf{192} (1998), 605--629, \href{https://arxiv.org/abs/q-alg/9704011}{arXiv:q-alg/9704011}.

\bibitem{Harada-Matsuo-Noshita-Watanabe}
Harada K., Matsuo Y., Noshita G., Watanabe A., $q$-Deformation of corner vertex
  operator algebras by Miura transformation, \href{https://doi.org/10.1007/JHEP04(2021)202}{\textit{J.~High Energy Phys.}}
  \textbf{2021} (2021), no.~4, 202, 49~pages, \href{https://arxiv.org/abs/2101.03953}{arXiv:2101.03953}.

\bibitem{Kac}
Kac V.G., Infinite-dimensional {L}ie algebras, 3rd ed., \href{https://doi.org/10.1017/CBO9780511626234}{Cambridge University
  Press}, Cambridge, 1990.

\bibitem{Kojima2}
Kojima T., Quadratic relations of the deformed {$W$}-superalgebra {$\mathcal
  W_{q,t} (A(M,N))$}, \href{https://doi.org/10.1088/1751-8121/ac129f}{\textit{J.~Phys.~A}} \textbf{54} (2021), 335201, 37~pages,
  \href{https://arxiv.org/abs/2101.01110}{arXiv:2101.01110}.

\bibitem{Kojima1}
Kojima T., Quadratic relations of the deformed {$W$}-superalgebra
  {$\mathcal{W}_{q,t}(\mathfrak{sl}(2|1))$}, \href{https://doi.org/10.1063/1.5142516}{\textit{J.~Math. Phys.}}
  \textbf{62} (2021), 051702, 19~pages, \href{https://arxiv.org/abs/1912.03096}{arXiv:1912.03096}.

\bibitem{Odake}
Odake S., Comments on the deformed {$W_N$} algebra, \href{https://doi.org/10.1142/S0217979202011792}{\textit{Internat.~J.~Modern
  Phys.~B}} \textbf{16} (2002), 2055--2064, \href{https://arxiv.org/abs/math.QA/0111230}{arXiv:math.QA/0111230}.

\bibitem{Semenov-Sevostyanov}
Semenov-Tian-Shansky M.A., Sevostyanov A.V., Drinfeld--{S}okolov reduction for
  difference operators and deformations of {${\mathcal W}$}-algebras.~{II}.
  {T}he general semisimple case, \href{https://doi.org/10.1007/s002200050312}{\textit{Comm. Math. Phys.}} \textbf{192}
  (1998), 631--647, \href{https://arxiv.org/abs/q-alg/9702016}{arXiv:q-alg/9702016}.

\bibitem{Sevostyanov}
Sevostyanov A., Drinfeld--{S}okolov reduction for quantum groups and
  deformations of {$W$}-algebras, \href{https://doi.org/10.1007/PL00012604}{\textit{Selecta Math. (N.S.)}} \textbf{8}
  (2002), 637--703, \href{https://arxiv.org/abs/math.QA/0107215}{arXiv:math.QA/0107215}.

\bibitem{Shiraishi-Kubo-Awata-Odake}
Shiraishi J., Kubo H., Awata H., Odake S., A quantum deformation of the
  {V}irasoro algebra and the {M}acdonald symmetric functions, \href{https://doi.org/10.1007/BF00398297}{\textit{Lett.
  Math. Phys.}} \textbf{38} (1996), 33--51, \href{https://arxiv.org/abs/q-alg/9507034}{arXiv:q-alg/9507034}.

\bibitem{Leur1}
van~de Leur J.W., Contragredient {L}ie superalgebras of finite growth, Ph.D.~Thesis, {U}trecht University, 1986.

\bibitem{Leur2}
van~de Leur J.W., A classification of contragredient {L}ie superalgebras of
  finite growth, \href{https://doi.org/10.1080/00927878908823823}{\textit{Comm. Algebra}} \textbf{17} (1989), 1815--1841.

\end{thebibliography}

\pdfbookmark[1]{References}{ref}

\LastPageEnding

\end{document}